\newtheorem{theorem}{Theorem}[section]
\newtheorem{claim}[theorem]{Claim}
\newtheorem{corollary}[theorem]{Corollary}
\newtheorem{proposition}[theorem]{Proposition}
\numberwithin{equation}{section}
\begin{document}
\title[Structure of shrinking solitons]{Structure at infinity for shrinking
Ricci solitons}
\author{Ovidiu Munteanu}
\email{ovidiu.munteanu@uconn.edu}
\address{Department of Mathematics, University of Connecticut, Storrs, CT
06268, USA}
\author{Jiaping Wang}
\email{jiaping@math.umn.edu}
\address{School of Mathematics, University of Minnesota, Minneapolis, MN
55455, USA}
\thanks{The first author was partially supported by NSF grant DMS-1506220.
The second author was partially supported by NSF grant DMS-1606820 }

\begin{abstract}
This paper concerns the structure at infinity for complete gradient
shrinking Ricci solitons. It is shown that for such a soliton with bounded
curvature, if the round cylinder $\mathbb{R}\times \mathbb{S}^{n-1}/\Gamma$
occurs as a limit for a sequence of points going to infinity along an end,
then the end is asymptotic to the same round cylinder at infinity. This
result is applied to obtain structural results at infinity for four
dimensional gradient shrinking Ricci solitons. It was previously known that
such solitons with scalar curvature approaching zero at infinity must be
smoothly asymptotic to a cone. For the case that the scalar curvature is
bounded from below by a positive constant, we conclude that along each end
the soliton is asymptotic to a quotient of $\mathbb{R}\times \mathbb{S}^{3}$
or converges to a quotient of $\mathbb{R}^{2}\times \mathbb{S}^{2}$ along
each integral curve of the gradient vector field of the potential function.
\end{abstract}

\maketitle

\textbf{Keywords:} Ricci solitons, Ricci flow, asymptotic structure. 
 
\textbf{Mathematics Subject Classification (2010)} 53C44, 53C21.

\section{Introduction}

The goal of this paper is to continue our study of complete four dimensional
gradient shrinking Ricci solitons initiated in \cite{MW} and to obtain
further information concerning the structure at infinity of such manifolds.
Recall that a Riemannian manifold $\left( M,g\right) $ is a gradient
shrinking Ricci soliton if there exists a smooth function $f\in C^{\infty
}\left( M\right) $ such that the Ricci curvature $\mathrm{Ric}$ of $M$ and
the hessian $\mathrm{Hess}(f)$ of $f$ satisfy the following equation

\begin{equation*}
\mathrm{Ric}+\mathrm{Hess}\left( f\right) =\frac{1}{2}g.
\end{equation*}

By defining $\phi _{t}$ to be the one-parameter family of diffeomorphisms
generated by the vector field $\frac{\nabla f}{-t}$ for $-\infty <t<0,$ one
checks that $g(t)=\left( -t\right) \,\phi _{t}^{\ast }\,g$ is a solution to
the Ricci flow 
\begin{equation*}
\frac{\partial g(t)}{\partial t}=-2\,\mathrm{Ric}(t)
\end{equation*}%
on time interval $(-\infty ,0).$ Since the Ricci flow equation is invariant
under the action of the diffeomorphism group, such solution $g(t)$ is
evidently a shrinking self-similar solution to the Ricci flow. Gradient
shrinking Ricci solitons have played a crucial role in the singularity
analysis of Ricci flows. A conjecture, generally attributed to Hamilton,
asserts that the blow-ups around a type-I singularity point of a Ricci flow
always converge to (nontrivial) gradient shrinking Ricci solitons. More
precisely, a Ricci flow solution $(M,g(t))$ on a finite-time interval $%
[0,T), $ $T<\infty ,$ is said to develop a Type-I singularity (and $T$ is
called a Type-I singular time) if there exists a constant $C>0$ such that
for all $t\in \lbrack 0,T)$ 
\begin{equation*}
\sup_{M} \vert\mathrm{Rm}_{g(t)}\vert_{g(t)}\leq \frac{C}{T-t}
\end{equation*}
and 
\begin{equation*}
\limsup_{t\to T} \,\,\sup_{M} \vert\mathrm{Rm}_{g(t)}\vert_{g(t)} = \infty.
\end{equation*}
Here $\mathrm{Rm}_{g(t)}$ denotes the Riemannian curvature tensor of the
metric $g(t).$ A point $p\in M$ is a singular point if there exists no
neighborhood of $p$ on which $\vert\mathrm{Rm}_{g(t)}\vert_{g(t)}$ stays
bounded as $t\to T.$ Then the conjecture claims that for every sequence $%
\lambda_j\to \infty,$ the rescaled Ricci flows $(M, g_j(t), p)$ defined on $%
[-\lambda_j\,T, 0)$ by $g_j(t) := \lambda_j\,g(T + \lambda_j^{-1}\,t)$
subconverge to a nontrivial gradient shrinking Ricci soliton.

While the conjecture was first confirmed by Perelman \cite{P1} for the
dimension three case, in the most general form it has also been satisfactorily
resolved. In the case where the blow-up limit is compact, it was confirmed
by Sesum \cite{S}. In the general case, blow-up to a gradient shrinking
soliton was proved by Naber \cite{Na}. The nontriviality issue of the
soliton was later taken up by Enders, M\"{u}ller and Topping \cite{EMT}, see
also Cao and Zhang \cite{CZh}.

In view of their importance, it is then natural to seek a classification of
the gradient shrinking Ricci solitons. It is relatively simple to classify
two dimensional ones, \cite{H1}.

\begin{theorem}
A two dimensional gradient shrinking Ricci soliton is isometric to the plane 
$\mathbb{R}^{2}$ or to a quotient of the sphere $\mathbb{S}^{2}.$
\end{theorem}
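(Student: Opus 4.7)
The plan is to exploit the fact that in dimension two the Ricci tensor is fully determined by the scalar curvature via $\mathrm{Ric} = \frac{R}{2}g$. Substituting this into the soliton equation reduces it to the conformal Hessian relation
\begin{equation*}
\mathrm{Hess}(f) = \frac{1-R}{2}\,g,
\end{equation*}
with trace $\Delta f = 1 - R$. Combining the contracted second Bianchi identity with this reduced equation yields the general soliton identity $\mathrm{Ric}(\nabla f) = \frac{1}{2}\nabla R$, which in two dimensions collapses to $\nabla R = R\,\nabla f$. Integration along a path then gives $R = c\,e^{f}$ for a constant $c\geq 0$, and together with the standard shrinker identity $R + |\nabla f|^{2} - f = \mathrm{const}$, which I normalize to zero by adding a constant to $f$, one obtains $f = R + |\nabla f|^{2}$. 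A strong maximum principle argument (or an appeal to Chen's nonnegativity theorem for $R$ on shrinking solitons) yields the clean dichotomy: either $R \equiv 0$ or $R > 0$ on all of $M$.

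If $R \equiv 0$, the metric is flat and $\mathrm{Hess}(f) = \frac{1}{2}g$, so $f$ is strictly convex. Passing to the universal cover $\mathbb{R}^{2}$, the lifted function $\tilde f$ has a unique critical point, which must be fixed by every deck transformation; since the deck group acts freely on $\mathbb{R}^{2}$, it must be trivial, and $M$ is isometric to $\mathbb{R}^{2}$. In the remaining case $R > 0$, the normalized identity gives $f = R + |\nabla f|^{2} > 0$, hence $R = c\,e^{f} \geq c > 0$ uniformly. Bonnet-Myers applied to the lower bound $\mathrm{Ric} \geq \frac{c}{2}g$ then forces $M$ to be compact.

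It remains to show that a compact two dimensional shrinker with $R>0$ actually has constant curvature. Here one can either invoke Tashiro's rigidity theorem for functions with conformal Hessian, which on a compact manifold forces the metric to be isometric up to scale to a round sphere, or follow Hamilton's original route: the soliton evolves by self-similar rescaling under Ricci flow, and Hamilton's convergence theorem for 2D Ricci flow from a positive curvature metric on a closed surface produces a limit of constant curvature, which a self-similar solution can only match if it already has constant curvature. Either route concludes that $(M,g)$ has constant positive Gauss curvature and so is a quotient of the round $\mathbb{S}^{2}$. This rigidity in the compact case is the main obstacle: compactness itself is immediate from the lower bound on $R$, but the passage from "positive scalar curvature on a compact surface with the soliton equation" to "constant curvature" genuinely needs either the Tashiro-Obata rigidity machinery or the full strength of Hamilton's 2D Ricci flow analysis, and cannot be read off from the algebraic identities alone.
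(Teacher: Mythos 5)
The paper states this as a classical result and attributes it to Hamilton \cite{H1} without supplying a proof, so there is no in-paper argument to compare against; I evaluate your proposal on its own terms.

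The reductions you carry out are correct and cleanly done. The identity $\nabla R = 2\mathrm{Ric}(\nabla f)$ does collapse in dimension two to $\nabla R = R\,\nabla f$, hence $R = c\,e^{f}$ after invoking Chen's dichotomy that $R\equiv 0$ or $R>0$ everywhere. In the flat case, lifting the strictly convex potential to the universal cover $\mathbb{R}^{2}$ and observing that the unique critical point must be fixed by the freely acting deck group correctly rules out all flat quotients except $\mathbb{R}^{2}$ itself. In the positive case, $f = R + |\nabla f|^{2} > 0$ gives $R = c\,e^{f}\geq c > 0$, and Bonnet--Myers yields compactness (one can also note that $ce^{f}\leq f$ bounds $f$, contradicting the quadratic growth on a noncompact shrinker).

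The one step that does not hold as stated is your first option for the compact rigidity. Tashiro's theorem for a non-constant $f$ with $\mathrm{Hess}(f) = \phi\,g$ on a compact manifold does \emph{not} produce a round sphere; it produces a warped product $\bigl([0,L]\times\mathbb{S}^{n-1},\, dt^{2}+w(t)^{2}g_{\mathbb{S}^{n-1}}\bigr)$ with $f$ a function of $t$ alone, i.e.\ a rotationally symmetric metric with no a priori control on the warping function $w$. That is a genuine structural simplification, but to conclude constant curvature you must feed this rotational symmetry back into the soliton equation and analyze the resulting ODE system, which is precisely what Hamilton does in ``The Ricci flow on surfaces.'' Alternatively, the standard slick route (Chen--Lu--Tian) uses the Kazdan--Warner obstruction on $\mathbb{S}^{2}$ together with the fact that $\nabla f$ is a conformal vector field to force $f$ to be constant. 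Your second option, invoking Hamilton's full 2D Ricci flow convergence, is valid but heavier than needed. So the overall scheme is sound, but the Tashiro appeal by itself would leave the argument unfinished; it must be supplemented by the soliton ODE analysis or by the Kazdan--Warner identity.
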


For the three dimensional case, there is a parallel classification result as
well.

\begin{theorem}
\label{dim3}A three dimensional gradient shrinking Ricci soliton is
isometric to the Euclidean space $\mathbb{R}^{3}$or to a quotient of the
sphere $\mathbb{S}^{3}$or of the cylinder $\mathbb{R}\times \mathbb{S}^{2}.$
\end{theorem}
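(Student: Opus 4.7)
My plan is to reduce, via a strong maximum principle dichotomy, to two cases: either the soliton splits off a line isometrically, or it has strictly positive Ricci curvature and is compact.

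The first step is to establish $\mathrm{Ric}\geq 0$ on $M$. Since in dimension three the full Riemann tensor is algebraically determined by $\mathrm{Ric}$, this already yields nonnegative sectional curvature. One argues via the Hamilton--Ivey pinching estimate applied to the self-similar Ricci flow generated by the soliton: the estimate forces any complete ancient solution in dimension three to have $\mathrm{Rm}\geq 0$, and the self-similar flow qualifies as such an ancient solution once curvature growth is controlled through the soliton identity $R+|\nabla f|^{2}=f$ together with Chen's result that $R\geq 0$ on every shrinker.

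Next, on the self-similar flow the Ricci tensor satisfies a Lichnerowicz type heat equation, so Hamilton's strong maximum principle on the nonnegatively curved background yields a dichotomy: either $\mathrm{Ric}>0$ pointwise, or the null distribution of $\mathrm{Ric}$ is parallel and flow invariant. In the degenerate case, the de Rham decomposition gives an isometric product $M=\mathbb{R}\times N^{2}$; inserting this splitting into the soliton equation forces $N^{2}$ to be a two-dimensional gradient shrinker, and the preceding classification theorem yields $M=\mathbb{R}^{3}$ or $M=(\mathbb{R}\times \mathbb{S}^{2})/\Gamma$.

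In the remaining case $\mathrm{Ric}>0$ everywhere, I would show $M$ must be compact, so that Hamilton's theorem on compact three-manifolds of positive Ricci curvature identifies $M$ as a quotient of $\mathbb{S}^{3}$. Compactness is the crux: if $M$ were noncompact with $\mathrm{Ric}>0$, a dimension reduction along any sequence $p_{j}\to \infty$ would produce a nonflat limit of nonnegative curvature which, by the previous step applied at infinity, must split off a line; this contradicts strict positivity of Ricci on $M$, essentially along the lines of Perelman's analysis of ancient $\kappa$-solutions in dimension three. Ruling out this noncompact strictly positive case is the main obstacle, whereas the curvature positivity and splitting portions rely on more standard maximum principle and holonomy arguments.
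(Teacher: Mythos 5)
The paper does not supply its own proof of this theorem; it cites the classification (in final form to Ni--Wallach and Cao--Chen--Zhu, building on Ivey, Hamilton, Perelman and Naber) and the paragraph after the statement sketches exactly the route you propose: Hamilton--Ivey pinching gives $\mathrm{Rm}\geq 0$, Hamilton's strong maximum principle gives the split-or-strictly-positive dichotomy, the split case reduces to the two-dimensional classification, and the strictly positive case forces compactness via Perelman. So your outline matches the paper's sketch at the level of a sketch.

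The gaps in your write-up, however, are precisely the ones the paper is careful to flag. Your claim that ``curvature growth is controlled through the soliton identity $S+|\nabla f|^{2}=f$ together with Chen's $S\geq 0$'' is not correct: that identity only bounds $|\nabla f|$ by $\sqrt{f}$ and gives no control whatsoever on $|\mathrm{Rm}|$, so it does not license the classical Hamilton--Ivey estimate, whose maximum-principle proof on a noncompact manifold requires a priori curvature bounds on compact time intervals. Obtaining $\mathrm{Rm}\geq 0$ for complete three-dimensional shrinkers without such a bound is a nontrivial localization argument due to B.-L.~Chen, not a routine step. More seriously, the compactness argument in the $\mathrm{Ric}>0$ case---which you rightly call the crux---is Perelman's dimension-reduction analysis and therefore uses $\kappa$-noncollapsing together with bounded curvature; gradient shrinking solitons carry neither hypothesis a priori. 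The paper says as much: Perelman's conclusion holds ``provided that the soliton is noncollapsing with bounded curvature,'' the noncollapsing was later removed by Naber, and the unconditional Theorem~\ref{dim3} is obtained by Ni--Wallach and Cao--Chen--Zhu via a genuinely different (integral-estimate) argument. As written, your proposal proves the theorem only for bounded-curvature, $\kappa$-noncollapsed shrinkers, not in the stated generality.
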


This theorem has a long history. Ivey \cite{I1} first showed that a three
dimensional compact gradient shrinking Ricci soliton must be a quotient of
the sphere $\mathbb{S}^{3}.$ Later, it was realized from the Hamilton-Ivey
estimate \cite{H1} that the curvature of a three dimensional gradient
shrinking Ricci soliton must be nonnegative. Moreover, by the strong maximum
principle of Hamilton \cite{H3}, the manifold must split off a line, hence
is a quotient of $\mathbb{R}\times \mathbb{S}^{2}$ or $\mathbb{R}^{3},$ if
its sectional curvature is not strictly positive. When the sectional
curvature is strictly positive, Perelman \cite{P2} showed that the soliton
must be compact, hence a quotient of the sphere, provided that the soliton
is noncollapsing with bounded curvature. Obviously, the classification
result follows by combining all these together, at least for the ones which
are noncollapsing with bounded curvature. The result in particular implies
that a type I singularity of the Ricci flow on a compact three dimensional
manifold is necessarily of spherical or neck-like, a fact crucial for
Perelman \cite{P2} to define the Ricci flows with surgery and for the
eventual resolution of the Poincar\'{e} or the more general Thurston's
geometrization conjecture. The noncollapsing assumption was later removed by
Naber \cite{Na}. By adopting a different argument, Ni and Wallach \cite{NW},
and Cao, Chen and Zhu \cite{CCZ} showed the full classification result
Theorem \ref{dim3}. Some relevant contributions were also made in \cite{Na,
PW}. In passing, we mention that it is now known that a complete shrinking
Ricci soliton of any dimension with positive sectional curvature is compact
by \cite{MW2}.

The logical next step is to search for a classification of four dimensional
gradient shrinking Ricci solitons. Such a result should be very much
relevant in understanding the formation of singularities of the Ricci flows
on four dimensional manifolds, just like the three dimensional case.
However, in contrast to the dimension three case, for dimension four or
higher, the curvature of a gradient shrinking Ricci soliton may change sign
as demonstrated by the examples constructed in \cite{FIK}. The existence of
such examples, which are obviously not of the form of a sphere, or the
Euclidean space, or their product, certainly complicates the classification
outlook.

Note that in the case of dimension three, the curvature operator, being
nonnegative, is bounded by the scalar curvature. In the case of dimension
four, we showed that such a conclusion still holds even though the curvature
operator no longer has a fixed sign, \cite{MW}. In particular, this implies
that the curvature operator must be bounded if the scalar curvature is.

\begin{theorem}
\label{MW}Let $\left( M,g,f\right) $ be a four dimensional complete gradient
shrinking Ricci soliton with bounded scalar curvature $S$. Then there exists
a constant $c>0$ so that 
\begin{equation*}
\left\vert \mathrm{Rm}\right\vert \leq c\,S\text{ \ on } M.
\end{equation*}
\end{theorem}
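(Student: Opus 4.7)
The strategy is to bound each irreducible $O(4)$-component of the Riemann tensor by a multiple of $S$. In dimension four, the splitting $\Lambda^{2}=\Lambda^{+}\oplus\Lambda^{-}$ decomposes the curvature operator into the two Weyl parts $W^{\pm}$, the traceless Ricci $\mathring{\mathrm{Ric}}$, and the scalar part. Since $S>0$ holds on any non-flat shrinker (by Chen), it suffices to prove pointwise bounds $|\mathring{\mathrm{Ric}}|\le c\,S$ and $|W^{\pm}|\le c\,S$.

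My starting point is the soliton identity $\Delta_{f}S=S-2|\mathrm{Ric}|^{2}$ together with the drifted Bochner formula
\[
\Delta_{f}\mathrm{Rm}\;=\;\mathrm{Rm}+\mathrm{Rm}\ast\mathrm{Rm},
\]
where $\ast$ denotes universal quadratic contractions, from which I derive differential inequalities for $|\mathring{\mathrm{Ric}}|^{2}$ and $|W^{\pm}|^{2}$ under $\Delta_{f}$. I then consider the ratios $u_{\varepsilon}=|W^{\pm}|^{2}/(S+\varepsilon)^{2}$ and $v_{\varepsilon}=|\mathring{\mathrm{Ric}}|^{2}/(S+\varepsilon)^{2}$ and apply a maximum principle. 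The reaction term is cubic in curvature; the key algebraic ingredient is that, because $W^{\pm}$ is a traceless symmetric endomorphism of the three-dimensional bundle $\Lambda^{\pm}$, one has the pointwise inequality $|\operatorname{tr}((W^{\pm})^{3})|\le\tfrac{1}{\sqrt{6}}|W^{\pm}|^{3}$. The cross term of the type $W^{\pm}\ast\mathring{\mathrm{Ric}}\ast\mathring{\mathrm{Ric}}$ is absorbed once $|\mathring{\mathrm{Ric}}|\le c\,S$ is in hand, which I would establish by a parallel argument driven by $\Delta_{f}S=S-2|\mathrm{Ric}|^{2}$.

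The main obstacle is running the maximum principle on the noncompact manifold $M$. The Sobolev constant is not \emph{a priori} bounded, so naive Moser iteration fails; instead one exploits the quadratic growth of $f$, the integrability of $e^{-f}$, the volume estimate $|B_{r}(p)|\le C\,r^{4}$ for shrinkers, and an $f$-drifted cutoff argument to convert the weighted-integral information
\[
\int_{M}|\mathrm{Ric}|^{2}e^{-f}\,d\mathrm{vol}\;=\;\tfrac{1}{2}\int_{M}S\,e^{-f}\,d\mathrm{vol}\;<\;\infty
\]
into a pointwise conclusion. A subsidiary difficulty is the possible degeneration of the ratios $u_{\varepsilon},v_{\varepsilon}$ near points where $S$ is small, which is handled by a bootstrap: an intermediate inequality $|\mathrm{Rm}|^{2}\le C\,S\,|\mathrm{Rm}|+C$ first gives $|\mathrm{Rm}|\le C\,S^{1/2}$, and then a second iteration upgrades the exponent to one. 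This is also where the dimension-four hypothesis enters essentially: the cubic trace inequality for $W^{\pm}$ has no analogue in higher dimensions, so the linear-in-$S$ bound could not be reached by the same route.
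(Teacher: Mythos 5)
This theorem is not proved in the present paper; it is quoted from the authors' earlier work \cite{MW}, and the text only indicates the strategy: the potential $f$ is ``exploited in an essential way \dots\ by working on the level sets of~$f$,'' i.e.\ the argument is an integral-over-level-sets argument, not a pointwise maximum-principle argument. Your proposal goes a genuinely different route, via a drift maximum principle on the ratios $|W^{\pm}|^{2}/(S+\varepsilon)^{2}$ and $|\mathring{\mathrm{Ric}}|^{2}/(S+\varepsilon)^{2}$, and as written it has gaps that I do not see how to close.

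First, the bootstrap is incorrect as stated. From $|\mathrm{Rm}|^{2}\le C\,S\,|\mathrm{Rm}|+C$ the quadratic formula gives $|\mathrm{Rm}|\le \tfrac12\bigl(CS+\sqrt{C^{2}S^{2}+4C}\bigr)$, which for small $S$ is $O(1)$, not $O(\sqrt{S})$. To get $|\mathrm{Rm}|\le C\sqrt{S}$ you would need the constant term on the right to already be $O(S)$, which is essentially the conclusion you are trying to reach; so this step is circular. Second, and more fundamentally, the maximum principle on $w=|\mathrm{Rm}|^{2}S^{-2}$ (or its $W^{\pm}$/$\mathring{\mathrm{Ric}}$ variants) faces a chicken-and-egg obstruction: the reaction term coming from $\mathrm{Rm}\ast\mathrm{Rm}\ast\mathrm{Rm}$ is of size $|\mathrm{Rm}|\cdot w$, and to make this absorbable you already need $|\mathrm{Rm}|\le c\,S$ (or at least $|\mathrm{Rm}|\le c\sqrt S$). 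This is precisely why Section \ref{Curvature} of the present paper runs such a ratio maximum principle only \emph{after} postulating the hypothesis (\ref{p}), $|\overset{\circ}{\mathrm{Rm}}_{\Sigma}|^{2}\le\eta_{1}S^{2}$, which is the content of Theorem \ref{MW} itself. Third, the claim that $|\mathring{\mathrm{Ric}}|\le cS$ can be ``established by a parallel argument driven by $\Delta_{f}S=S-2|\mathrm{Ric}|^{2}$'' is unjustified: that identity yields only the weighted $L^{2}$ identity $\int_{M}|\mathrm{Ric}|^{2}e^{-f}=\tfrac12\int_{M}S\,e^{-f}$, and turning this into a \emph{pointwise} bound scaling with $S$ (rather than merely a global bound) is the whole difficulty, which you leave to an unspecified ``$f$-drifted cutoff.'' Finally, the cubic trace inequality $|\operatorname{tr}((W^{\pm})^{3})|\le\tfrac{1}{\sqrt6}|W^{\pm}|^{3}$ only controls the size of one reaction term; it does not give it a favorable sign, and it is not the decisive four-dimensional input used in \cite{MW}. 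There the dimension enters because the level sets $\Sigma(t)$ are three-manifolds whose curvature identities, combined with the four-dimensional decomposition of $\mathrm{Rm}$, make the level-set integral estimates close. Your scheme could plausibly serve as an a~posteriori route to derivative estimates once $|\mathrm{Rm}|\le cS$ is in hand (as the paper's Section~\ref{Curvature} shows), but it does not prove the theorem itself.
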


In the theorem, the constant $c>0$ depends only on the upper bound of the
scalar curvature $A$ and the geometry of the geodesic ball $B_{p}(r_{0}),$
where $p$ is a minimum point of potential function $f$ and $r_{0}$ is
determined by $A.$ We stress that the potential function $f$ of the soliton
is exploited in an essential way in our proof by working on the level sets
of $f.$

As an application, we obtained the following structural result. Recall that
a Riemannian cone is a manifold $[0,\infty )\times \Sigma $ endowed with
Riemannian metric $g_{c}=dr^{2}+r^{2}\,g_{\Sigma },$ where $(\Sigma
,g_{\Sigma })$ is a closed $(n-1)$-dimensional Riemannian manifold. Denote $%
E_{R}=(R,\infty )\times \Sigma $ for $R\geq 0$ and define the dilation by $%
\lambda $ to be the map $\rho _{\lambda }:E_{0}\rightarrow E_{0}$ given by $%
\rho _{\lambda }(r,\sigma )=(\lambda \,r,\sigma ).$ Then Riemannian manifold 
$(M,g)$ is said to be $C^{k}$ asymptotic to the cone $(E_{0},g_{c})$ if, for
some $R>0,$ there is a diffeomorphism $\Phi :E_{R}\rightarrow M\setminus
\Omega $ such that $\lambda ^{-2}\,\rho _{\lambda }^{\ast }\,\Phi ^{\ast
}\,g\rightarrow g_{c}$ as $\lambda \rightarrow \infty $ in $%
C_{loc}^{k}(E_{0},g_{c}),$ where $\Omega $ is a compact subset of $M.$ The
following result was established in \cite{MW}.

\begin{theorem}
\label{cone_i} Let $\left( M,g,f\right) $ be a complete four dimensional
gradient shrinking Ricci soliton with scalar curvature converging to zero at
infinity. Then there exists a cone $E_{0}$ such that $(M,g)$ is $C^{k}$
asymptotic to $E_{0}$ for all $k.$
\end{theorem}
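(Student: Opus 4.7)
The plan is to leverage Theorem \ref{MW} to upgrade the hypothesis $S\to 0$ at infinity to full curvature decay $|\mathrm{Rm}|\to 0$, and then run the standard asymptotic-cone argument for a shrinker whose curvature vanishes at infinity.

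First, I would use the Hamilton identity $S+|\nabla f|^{2}=f$ (with $f$ normalized in the usual way). Since $S$ is bounded and in fact tends to zero, $|\nabla f|^{2}=f+o(1)$, so $\rho :=2\sqrt{f}$ is a smooth proper exhaustion on the end with $|\nabla \rho |\to 1$ at infinity, and Cao--Zhou type estimates let me identify $\rho$ with the distance function up to an additive constant. The level sets $\Sigma _{t}=\{\rho =t\}$ then foliate the end for $t$ large, and the flow $\Phi _{t}$ of $\nabla \rho /|\nabla \rho |^{2}$ starting from a fixed $\Sigma _{R_{0}}$ gives a diffeomorphism $\Phi :[R_{0},\infty )\times \Sigma _{R_{0}}\to M\setminus \Omega $. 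In the $\Phi$-coordinates the pulled-back metric has the form $|\nabla \rho|^{-2}d\rho^{2}+g_{\rho}$ for a family $g_\rho$ of metrics on $\Sigma_{R_0}$.

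Next, the soliton identity $\mathrm{Hess}(f)=\tfrac{1}{2}g-\mathrm{Ric}$ together with $|\mathrm{Ric}|\le |\mathrm{Rm}|\le c\,S\to 0$ gives
\begin{equation*}
\mathrm{Hess}(\rho )=\frac{1}{\rho }\bigl(g-d\rho \otimes d\rho \bigr)-\frac{2}{\rho }\mathrm{Ric},
\end{equation*}
which is a controlled perturbation of the Hessian of the radial coordinate on a cone. The evolution of $g_{\rho}$ along the flow is essentially $\partial _{\rho }g_{\rho }=2\,\mathrm{Hess}(\rho )$, so writing $h_{\rho }:=\rho ^{-2}g_{\rho }$ one gets an ODE of the form $\partial _{\rho }h_{\rho }=O(|\mathrm{Ric}|/\rho )$. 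Integrating in $\rho$ shows that $h_{\rho }$ converges uniformly to a limit metric $g_{\Sigma }$ on $\Sigma _{R_{0}}$, which is exactly the cone condition $g\sim d\rho ^{2}+\rho ^{2}g_{\Sigma }$.

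For the $C^{k}$ statement I would bootstrap from the $C^{0}$ decay of $\mathrm{Rm}$ using the shrinker elliptic equation $\Delta _{f}\mathrm{Rm}=\mathrm{Rm}+\mathrm{Rm}\ast \mathrm{Rm}$ (or equivalently Shi-type estimates applied to the self-similar Ricci flow $g(t)=(-t)\phi _{t}^{\ast }g$), obtaining $|\nabla ^{k}\mathrm{Rm}|\to 0$ at infinity for every $k$. Re-substituting these into the evolution equation for $h_{\rho }$ and differentiating in the $\Sigma _{R_{0}}$ directions upgrades the convergence $\lambda ^{-2}\rho _{\lambda }^{\ast }\Phi ^{\ast }g\to g_{c}$ from $C^{0}$ to $C_{loc}^{k}$.

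The main obstacle I anticipate is the \emph{uniqueness} of the limit: a priori, different sequences $\rho _{j}\to \infty $ could yield different tangent cones. To rule this out I would need a Cauchy-type estimate for $h_{\rho }$, which requires integrability of $|\mathrm{Ric}|/\rho $ along the flow, and hence a quantitative (e.g.\ polynomial) decay rate for $S$ rather than just $S\to 0$. The natural source is the drift equation $\Delta _{f}S=S-2|\mathrm{Ric}|^{2}$ combined with $|\mathrm{Ric}|^{2}\le c\,S^{2}$ from Theorem \ref{MW}; on level sets of $f$, a maximum-principle argument along the level sets $\Sigma _{t}$ should convert the qualitative decay $S\to 0$ into an effective rate sufficient to close the argument and produce a single asymptotic cone.
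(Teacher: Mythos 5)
First a point of order: the paper does not prove Theorem~\ref{cone_i} here; it simply records it as a result established in \cite{MW}. So there is no internal proof to compare line by line against, but the machinery the present paper develops in Section~\ref{Curvature} makes the intended strategy clear, and your outline is consistent with it: use Theorem~\ref{MW} to upgrade $S\to 0$ to $|\mathrm{Rm}|\le c\,S\to 0$, take $\rho=2\sqrt f$ as the radial coordinate, compute $\mathrm{Hess}(\rho)=\tfrac1\rho(g-d\rho\otimes d\rho)-\tfrac2\rho\mathrm{Ric}$ from the soliton equation, and integrate the resulting ODE for $h_\rho=\rho^{-2}g_\rho$. You also correctly identify the crux: getting a \emph{rate} for $S$, rather than just $S\to 0$, so that $|\mathrm{Ric}|/\rho$ is integrable and the limit metric is unique.

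The gap is in how you propose to obtain that rate. The drift equation $\Delta_f S=S-2|\mathrm{Ric}|^2$ together with $|\mathrm{Ric}|^2\le cS^2$ from Theorem~\ref{MW} gives at best a first-order differential inequality in the \emph{radial} direction; to actually close it one writes $\tfrac{dS}{dt}=\frac{\Delta S-S+2|\mathrm{Ric}|^2}{|\nabla f|^2}$ along integral curves of $\nabla f$ (as in equations (\ref{S0})--(\ref{S'}) of the paper), and now the obstruction is the \emph{second-order} term $\Delta S$. The bound $|\mathrm{Rm}|\le cS$ combined with Shi's estimates only gives $|\Delta S|\lesssim 1$, which is useless here; what you actually need is $|\Delta S|\lesssim S^2$, and more generally $|\nabla^k\mathrm{Rm}|^2\lesssim S^{k+2}$ (Corollary~\ref{curv_4d}). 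That localized derivative estimate is the main technical content of Section~\ref{Curvature} (Propositions~\ref{D1}--\ref{D2}); it is not a corollary of $|\mathrm{Rm}|\le cS$, of the drift equation, or of a maximum principle on a single level set. Once it is in hand, the Harnack estimate (\ref{S}), (\ref{S'}) gives $S\lesssim f^{-1}$ on the end, and $|\nabla^k\mathrm{Rm}|^2\lesssim S^{k+2}\lesssim f^{-(k+2)}$ supplies the polynomial decay of all derivatives needed for the $C^k$ convergence of $\lambda^{-2}\rho_\lambda^*\Phi^*g$. The same caveat applies to your proposed $C^k$ bootstrap: the qualitative statement $|\nabla^k\mathrm{Rm}|\to 0$ obtained by Shi alone does not yield the rate that the $C^k$ asymptotic-cone condition actually requires, so you need the localized estimates there as well.
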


A recent result due to Kotschwar and L. Wang \cite{KW} states that two
gradient shrinking Ricci solitons (of arbitrary dimensions) must be
isometric if they are $C^2$ asymptotic to the same cone. Together with our
result, this implies that the classification problem for four dimensional
gradient shrinking Ricci solitons with scalar curvature going to zero at
infinity is reduced to the one for the limiting cones.

In this paper, we take up the case that the scalar curvature is bounded from
below by a positive constant and show the following structural result. Here,
a Riemannian manifold $(M,g)$ is said to be $C^{k}$ asymptotic to the
cylinder $L=\left(\mathbb{R}\times N, g_c\right)$, where $g_c$ is the
product metric, if there is a diffeomorphism $\Phi: L_0=(0,\infty)\times N
\rightarrow M\setminus \Omega$ such that $\rho _{\lambda }^{\ast }\,\Phi
^{\ast}\,g\rightarrow g_{c}$ as $\lambda \rightarrow \infty$ in $%
C_{loc}^{k}(L_{0},g_{c}),$ where $\Omega $ is a compact subset of $M$ and $%
\rho _{\lambda }: L\rightarrow L$ is the translation given by $\rho
_{\lambda }(r,\sigma )=(\lambda+r,\sigma)$ for $r\in \mathbb{R}$ and $\sigma
\in N.$

\begin{theorem}
\label{dim4}Let $\left( M,g,f\right) $ be a complete, four dimensional
gradient shrinking Ricci soliton with bounded scalar curvature $S.$ If $S$
is bounded from below by a positive constant on end $E$ of $M,$ then $E$ is
smoothly asymptotic to the round cylinder $\mathbb{R}\times \mathbb{S}%
^{3}/\Gamma,$ or for any sequence $x_{i}\in E$ going to infinity along an
integral curve of $\nabla f,$ $(M,g,x_{i})$ converges smoothly to $\mathbb{R}%
^{2}\times \mathbb{S}^{2}$ or its $\mathbb{Z}_2$ quotient. Moreover, the
limit is uniquely determined by the integral curve and is independent of the
sequence $x_i.$
\end{theorem}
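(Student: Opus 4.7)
The plan is to analyze pointed limits of the soliton along the end by Cheeger--Gromov--Hamilton compactness, invoke a splitting principle driven by the blow-up of $|\nabla f|$, reduce the cross-section to the three dimensional classification Theorem~\ref{dim3}, and then use the cylindrical rigidity theorem stated in the abstract to separate the two alternatives.

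\textbf{Step 1: Bounded curvature and limit solitons.} Since $S$ is bounded above, Theorem~\ref{MW} yields $|\mathrm{Rm}|\le c$ on all of $M$. Combined with the $\kappa$-noncollapsing of shrinking solitons at all scales, Hamilton's compactness theorem applies to any sequence $x_i\in E$ with $f(x_i)\to\infty$: after a subsequence and a shift of the potential by $f(x_i)$, the pointed data $(M,g,f-f(x_i),x_i)$ converge smoothly to a complete four dimensional gradient shrinking Ricci soliton $(M_\infty,g_\infty,f_\infty,x_\infty)$ with bounded curvature.

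\textbf{Step 2: Splitting off a line.} From the soliton equation $\mathrm{Hess}(f)=\tfrac12 g-\mathrm{Ric}$ and the identity $|\nabla f|^2+S=f+c_0$, I obtain $|\mathrm{Hess}(f)|\le C$ and $|\nabla f|(x_i)\to\infty$. Consequently
\[
\bigl|\nabla(\nabla f/|\nabla f|)\bigr|\le |\mathrm{Hess}(f)|/|\nabla f|\longrightarrow 0
\]
uniformly on geodesic balls of any fixed radius about $x_i$. Passing to the limit produces a nontrivial parallel unit vector field on $M_\infty$, so $M_\infty$ splits isometrically as $\mathbb{R}\times N^3$, where $N^3$ is a complete three dimensional gradient shrinking Ricci soliton with bounded curvature. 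Theorem~\ref{dim3} identifies $N^3$ with a quotient of $\mathbb{R}^3$, $\mathbb{S}^3$, or $\mathbb{R}\times\mathbb{S}^2$, and the positive lower bound $S\ge\epsilon$ passes to the limit and eliminates the flat case.

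\textbf{Step 3: Dichotomy and uniqueness.} Each subsequential limit along $E$ is therefore either a round cylinder $\mathbb{R}\times\mathbb{S}^3/\Gamma$ or a product $\mathbb{R}^2\times\mathbb{S}^2/\Gamma'$, with $\Gamma'$ drawn from a finite list that, by orientability and the structure of the isometries of $\mathbb{S}^2$, reduces to the trivial group or to the antipodal $\mathbb{Z}_2$. If \emph{some} sequence along $E$ has a cylindrical limit, the main rigidity theorem of the paper forces $E$ to be smoothly asymptotic to that very cylinder, yielding the first alternative. Otherwise every sequence has a limit of the second type. The main obstacle I anticipate is upgrading subsequential convergence to a unique limit along a fixed integral curve $\gamma$ of $\nabla f$: one must rule out that two different sequences on $\gamma$ yield nonisometric quotients. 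I would attack this by identifying the first $\mathbb{R}$-factor of any limit with the tangent direction of $\gamma$, observing that moving a bounded proper-time along $\gamma$ corresponds to a small perturbation of the rescaled soliton data, and then combining this continuity with the discreteness of the target list $\{\mathbb{R}^2\times\mathbb{S}^2,\ (\mathbb{R}^2\times\mathbb{S}^2)/\mathbb{Z}_2\}$ to force the subsequential limit to be constant along $\gamma$.
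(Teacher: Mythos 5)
Your overall architecture matches the paper's: bounded curvature via Theorem~\ref{MW}, a blow-down at infinity that splits off a line, reduction to the three dimensional classification, Theorem~\ref{cyl} to separate the alternatives, and a uniqueness step for the second alternative. The gap is in Steps~1--2, where you try to carry the soliton structure into the limit directly.

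In Step~1 you claim that $(M,g,f-f(x_i),x_i)$ converges smoothly to a four dimensional gradient shrinking Ricci soliton. This cannot be the case as stated: subtracting the constant $f(x_i)$ does not change $\nabla f$, and by Hamilton's identity $|\nabla f|^2=f-S$ one has $|\nabla f|(x_i)\to\infty$, so the functions $f-f(x_i)$ have unbounded gradient at the basepoints and do not subconverge in $C^\infty_{loc}$. Your Step~2 splitting via the nearly parallel unit vector field $\nabla f/|\nabla f|$ is fine and yields a Riemannian splitting $M_\infty=\mathbb{R}\times N^3$, but with no surviving potential function you have not established that $N^3$ carries a gradient shrinking soliton structure, which is exactly what you need in order to invoke Theorem~\ref{dim3}. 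This is precisely the content of Naber's theorem, which the paper cites: working with the rescaled Ricci flows $g_i(t)=\tau_i^{-1}g(\tau_i t)$, Naber shows that the pointed limit along a sequence going to infinity is a shrinking soliton that splits as $\mathbb{R}\times N$ with $N$ a normalized $(n-1)$-dimensional shrinking soliton. You should cite this rather than attempt to pass the potential to the limit; the potential function degenerates, and the soliton structure of the limit is recovered from the flow, not from $f-f(x_i)$.

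Your Step~3 uniqueness argument (continuity along $\gamma$ plus discreteness of the list $\{\mathbb{R}^2\times\mathbb{S}^2,\ (\mathbb{R}^2\times\mathbb{S}^2)/\mathbb{Z}_2\}$) is a reasonable alternative route and could be made precise, but the paper's mechanism is cleaner and already available in Naber: the weighted volume of the cross-section $N$ is independent of the sequence $x_i$ along a fixed integral curve (by monotonicity of the associated entropy/weighted volume quantity), and this invariant distinguishes $\mathbb{R}^2\times\mathbb{S}^2$ from its $\mathbb{Z}_2$ quotient. If you do retain your continuity argument, you would in effect be reproving a consequence of that monotonicity, so it is simpler to cite it.
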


Here and throughout the paper, $\mathbb{S}^n$ denotes the $n$-dimensional
standard sphere with metric normalized such that $\mathrm{Ric}=\frac{1}{2}%
\,g.$ As pointed out in \cite{Na}, in the second case, the limit in general
may depend on the integral curve as demonstrated by the example $M=\mathbb{R}%
\times \left(\mathbb{R}\times \mathbb{S}^{2}\right)/\mathbb{Z}_2.$ We remark
that under the additional assumption that the Ricci curvature is
non-negative, a similar version of Theorem \ref{dim4} was proved in \cite{Na}
by a different argument. Obviously, Theorem \ref{dim4} together with Theorem %
\ref{cone_i} would provide a description of the geometry at infinity for all
four dimensional gradient shrinking Ricci solitons with bounded scalar
curvature if one could establish a dichotomy that the scalar curvature $S$
either goes to $0$ at infinity or is bounded from below by a positive
constant. This question remains open presently. 

Let us now briefly describe how Theorem \ref{dim4} is proven. According to 
\cite{Na}, for any $n$-dimensional shrinking gradient Ricci soliton $%
(M^{n},g,f)$ with bounded curvature and a sequence of points $x_{i}\in M$
going to infinity along an integral curve of $\nabla f,$ by choosing a
subsequence if necessary, $(M^{n},g,x_{i})$ converges smoothly to a product
manifold $\mathbb{R}\times N^{n-1},$ where $N$ is a gradient shrinking Ricci
soliton. By the classification result of three dimensional gradient
shrinking Ricci solitons Theorem \ref{dim3} and the fact that the scalar
curvature is assumed to be bounded from below by a positive constant,
Theorem \ref{dim4} will then follow from the following structural result for
gradient shrinking Ricci solitons of arbitrary dimension.

\begin{theorem}
\label{cyl}Let $\left( M,g,f\right) $ be an $n$-dimensional, complete,
gradient shrinking Ricci soliton with bounded curvature. Assume that along
an end $E$ of $M$ there exists a sequence of points $x_{i}\rightarrow \infty 
$ with $\left( M,g,x_{i}\right) $ converging to the round cylinder $\mathbb{R%
}\times \mathbb{S}^{n-1}/\Gamma .$ Then $E$ is smoothly asymptotic to the
same round cylinder.
\end{theorem}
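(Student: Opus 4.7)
The plan is to upgrade the pointed Cheeger--Gromov convergence at the single sequence $\{x_{i}\}$ into uniform $C^{k}$ cylindrical asymptotics on $E\cap\{f>R\}$ for all large $R$. The strategy exploits the ancient self-similar Ricci flow $g(t)=(-t)\,\phi _{t}^{\ast }g$ generated by the soliton, a Kotschwar-type backward uniqueness theorem for the Ricci flow, and a transversal neck-structure argument on the level sets of $f$.

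First, extract quantitative data from the limit at $x_{i}$. Since the normalized round cylinder $\mathbb{R}\times\mathbb{S}^{n-1}/\Gamma$ (with $\mathrm{Ric}=\tfrac{1}{2}g$ on the sphere factor) has scalar curvature $\tfrac{n-1}{2}$, one obtains $S(x_{i})\to\tfrac{n-1}{2}$, a splitting of the Ricci eigenvalues as $0,\tfrac{1}{2},\ldots ,\tfrac{1}{2}$ in the limit, and via $S+|\nabla f|^{2}=f+C$ both $f(x_{i}),\,|\nabla f|(x_{i})\to\infty$. The $C^{\infty}$ convergence in fact holds on balls $B(x_{i},R_{i})$ with $R_{i}\to\infty$, so cylindrical control is already available in a thick tubular neighborhood of each integral curve of $\nabla f$ passing near $x_{i}$.

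Second, propagate this control along an integral curve of $\nabla f$ through $x_{i}$. Under $g(t)=(-t)\,\phi _{t}^{\ast }g$, motion along $\phi_{t}$ corresponds to motion in Ricci-flow time, and the shrinking round cylinder is itself a self-similar solution of the Ricci flow. Applying a Kotschwar-type backward uniqueness theorem to the difference between $g(t)$ and the cylinder model on a large parabolic neighborhood forces that difference to remain small at all earlier times, equivalently at all points further out along the integral curve. Theorem~\ref{MW}, together with its higher-dimensional analogue available under the bounded curvature assumption, ensures the full curvature tensor is controlled by $S$, which is needed to verify the hypotheses of backward uniqueness.

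Third, transfer the control transversally and assemble the global diffeomorphism. The level sets $\{f=t\}\cap E$ are compact for large $t$ since $f$ is proper; a maximum-principle argument on the scalar curvature, combined with the Ricci eigenvalue splitting from Step~1, should force every integral curve in $E\cap\{f>R\}$ to exhibit the same cylindrical asymptotic limit, with a common group $\Gamma$ (constant along $E$ by connectedness of the end and discreteness of conjugacy classes of finite subgroups of $O(n)$). Gluing the local splittings produced by the approximate Ricci decomposition yields the diffeomorphism $\Phi :(R,\infty )\times \mathbb{S}^{n-1}/\Gamma \to E\setminus \Omega $ demanded by the definition of asymptotic cylinder. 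The main obstacle is the second step: making backward uniqueness effective in the non-compact, weighted setting of a complete gradient shrinking soliton, where the difference tensor must satisfy a parabolic-type differential inequality with decay/growth compatible with Kotschwar's hypotheses, and where small transversal defects in the cylindrical approximation must not accumulate further out along $E$ --- a point that may require a \L ojasiewicz--Simon type inequality relative to the cylindrical soliton model.
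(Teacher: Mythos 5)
Your plan diverges sharply from the paper's actual argument at the crucial step, and the step you flag as ``the main obstacle'' is in fact a genuine gap, not a technicality. The paper does not use a Kotschwar-type backward uniqueness theorem at all. Backward uniqueness is a rigidity statement for \emph{exact} coincidence of two Ricci flows at a fixed time; it gives no quantitative stability when the soliton is merely \emph{close} to the cylinder near $x_i$. There is no known continuity or \L ojasiewicz--Simon-type estimate in this noncompact, self-similar setting that would turn approximate agreement at one scale into uniform approximate agreement at all earlier scales, and indeed the paper points to Catino--Deruelle--Mazzieri, who tried to use backward uniqueness in the easier rigidity direction (asymptotic cylinder $\Rightarrow$ exact cylinder), and records that even that proof is acknowledged incomplete. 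So Step 2 of your plan does not close.

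The paper's route is instead a Huisken-type pinching argument carried out on the level sets of $f$, viewed as an approximate Ricci flow. One introduces the curvature tensor $\overset{\circ}{\mathrm{Rm}}_\Sigma$ of the level sets, shows via the hypothesis that $\left|\overset{\circ}{\mathrm{Rm}}_\Sigma\right|^2 S^{-2} < \varepsilon/2$ on some $\Sigma(t_0)$, and then runs an open-and-closed (continuity) argument in $t$: the key estimate is a weighted differential inequality for $G=|\mathrm{Rm}|^2 S^{-2}-\tfrac{2}{(n-1)(n-2)}$, whose reaction term $P$ is shown to be coercive by a Huisken-style algebraic decomposition once the pinching is small. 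To apply the maximum principle on $\Sigma(t)$, one must control the normal Hessian $G_{nn}$ and the mean-curvature term, and this is exactly where the localized derivative estimates $\left|\nabla^k\mathrm{Rm}\right|^2\leq c_k S^{k+2}$ (Theorem~\ref{Curv}) come in. Those estimates are themselves proved \emph{under the pinching hypothesis} on $D(T)\setminus D(t_0)$, so the whole proof is a bootstrap: pinching at $t_0$ gives curvature estimates, curvature estimates preserve pinching out to $T$, a continuity argument lets $T\to\infty$. After pinching is established everywhere outward, a Harnack estimate for $S$ shows $\inf S>0$, and explicit decay rates $\left|\overset{\circ}{\mathrm{Rm}}_\Sigma\right|\leq c f^{-\delta}$, $\left|S-\tfrac{n-1}{2}\right|\leq c f^{-\delta}$ are obtained via ODE integration along integral curves and interpolation, which then translate into smooth convergence of the level-set metrics to the round metric.

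Two further inaccuracies in your write-up: Theorem~\ref{MW} is a four-dimensional result, and it has no general higher-dimensional analogue under bounded curvature alone; the paper obtains the needed $|\mathrm{Rm}|\leq c\sqrt{S}$ and then $|\mathrm{Rm}|\leq cS$ on the relevant region from the pinching hypothesis, not merely from bounded curvature. Also the normalization is $S+|\nabla f|^2=f$ (no additive constant).
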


The proof of this theorem constitutes the major part of the paper.
Conceptually speaking, we will view the level sets of $f$ endowed with the
induced metric as an approximate Ricci flow and adopt the argument due to
Huisken \cite{Hu} who proved that the Ricci flow starting from a manifold
with sufficiently pinched sectional curvature must converge to a quotient of
the round sphere. However, to actually carry out the argument, we have to
overcome some serious technical hurdles, one of them being the control of
the scalar curvature. Along the way, we have managed to obtain some
localized estimates for the derivatives of the curvature tensor, which may
be of independent interest. In particular, these estimates enabled us to
derive a Harnack type estimate for the scalar curvature.

There are quite a few related works concerning the geometry and
classification of high dimensional gradient shrinking Ricci solitons. The
survey paper \cite{C1} contains a wealth of information and then current
results. The paper by Naber \cite{Na} has strong influence on the present
work. For some of the more recent progress, we refer to \cite{Cat1}, \cite%
{Cat2}, \cite{CC1}, \cite{CC2}, \cite{CWZ}, \cite{LNW}. In the other
direction, Catino, Deruelle and Mazzieri \cite{CDM} have attempted to
address the rigidity issue for the complete gradient shrinking Ricci
solitons which are asymptotic to the round cylinder at infinity, that is,
whether the soliton $M$ in Theorem \ref{cyl} is in fact itself a round
cylinder. Apparently, this issue remains unresolved, as it was stated there
that the proof given is yet incomplete.

The paper is organized as follows. After recalling a few preliminary facts
in section \ref{Prelim}, we prove some useful localized curvature estimates
in section \ref{Curvature}. Theorem \ref{cyl} is then proved in section \ref%
{Convergence}. The applications to four dimensional gradient shrinking Ricci
solitons are discussed in section \ref{4-dim}.

We would like to thank Ben Chow, Brett Kotschwar, Aaron Naber and Lei Ni for
their interest and helpful discussions. We also thank Yongjia Zhang for
useful comments on a previous draft of this paper.

\section{\label{Prelim}Preliminaries}

In this section, we recall some preliminary facts concerning gradient
shrinking Ricci solitons. We will use the same notation as in \cite{MW}.
Throughout this paper, $(M,g)$ denotes an $n$-dimensional, complete
noncompact gradient shrinking Ricci soliton. A result of Chen (\cite{C,C2})
implies that the scalar curvature $S>0$ on $M$, unless $M$ is flat. This
result was later refined in \cite{CLY} to that 
\begin{equation}
S\geq \frac{C_{0}}{f}  \label{CLY}
\end{equation}%
for some positive constant $C_{0}$ depending on the soliton. Furthermore, by
adding a constant to the potential function $f$ if necessary, one has the
following important identity due to Hamilton \cite{H1}. 
\begin{equation*}
S+\left\vert \nabla f\right\vert ^{2}=f.
\end{equation*}%
For such a normalized potential function $f$, it is well known \cite{CZ}
that there exist positive constants $c_{1}$ and $c_{2}$ such that 
\begin{equation}
\frac{1}{4}r^{2}\left( x\right) -c_{1}\,r\left( x\right) -c_{2}\leq f\left(
x\right) \leq \frac{1}{4}r^{2}\left( x\right) +c_{1}\,r\left( x\right)
+c_{2},  \label{asympt}
\end{equation}%
where $r\left( x\right) $ is the distance to a fixed point $p\in M.$
Moreover, $c_{1}$ and $c_{2}$ can be chosen to depend only on $n$ if $p$ is
a minimum point of $f$, see \cite{HM}.

Consequently, if the scalar curvature of $\left( M,g\right) $ is bounded,
then there exists $t_{0}>0$ such that the level set 
\begin{equation*}
\Sigma \left( t\right) =\left\{ x\in M:f\left( x\right) =t\right\} 
\end{equation*}%
of $f$ is a compact Riemannian manifold for $t\geq t_{0}.$ Also, the domain 
\begin{equation*}
D\left( t\right) :=\left\{ x\in M:f\left( x\right) \leq t\right\} 
\end{equation*}%
is a compact manifold with smooth boundary $\Sigma \left( t\right) .$

Since the volume of $M$ grows polynomially of order at most $n$ by \cite{CZ}%
, one sees that the weighted volume of $M$ given by 
\begin{equation*}
V_f(M)=\int_M e^{-f}\, dv
\end{equation*}
must be finite.

We recall the following equations for various curvature quantities of $M$,
see e.g. \cite{MW}. 
\begin{eqnarray}
\nabla S &=&2\mathrm{Ric}(\nabla f)  \label{m8} \\
\nabla _{l}R_{ijkl} &=&R_{ijkl}f_{l}=\nabla _{j}R_{ik}-\nabla _{i}R_{jk} 
\notag \\
\Delta _{f}S &=&S-2\left\vert \mathrm{Ric}\right\vert ^{2}  \notag \\
\Delta _{f}\mathrm{Ric} &=&\mathrm{Ric}-2\mathrm{RmRic}  \notag \\
\Delta _{f}\mathrm{Rm} &=&\mathrm{Rm}+\mathrm{Rm}\ast \mathrm{Rm}  \notag \\
\Delta _{f}\left( \nabla ^{k}\mathrm{Rm}\right)  &=&\left( \frac{k}{2}%
+1\right) \nabla ^{k}\mathrm{Rm}+\sum_{j=0}^{k}\nabla ^{j}\mathrm{Rm}\ast
\nabla ^{k-j}\mathrm{Rm}.  \notag
\end{eqnarray}%
Here, $\Delta _{f}$ is the weighted Laplacian defined by $\Delta
_{f}T=\Delta T-\left\langle \nabla f,\nabla T\right\rangle $  for a tensor
field $T.$ The notation $\mathrm{Rm}\ast \mathrm{Rm}$ denotes a quadratic
expression in the Riemann curvature tensor and $\nabla ^{j}\mathrm{Rm}$
denotes the $j$-th covariant derivative of the curvature tensor $\mathrm{Rm}.
$

As mentioned in the introduction, $M$ may be viewed as a self-similar
solution to the Ricci flow. Therefore, if the curvature of $\left(
M,g\right) $ is bounded, that is, there exists a constant $C>0$ such that $%
\left\vert \mathrm{Rm}\right\vert \leq C$ on $M$, then by Shi's derivative
estimates \cite{Shi}, for each $k\geq 1,$ there exists a constant $A_{k}>0$
such that 
\begin{equation}
\left\vert \nabla ^{k}\mathrm{Rm}\right\vert \leq A_{k}\text{ \ on }M
\label{Rm}
\end{equation}
with $A_{k}$ depending only on $n,k$ and $C.$

Using (\ref{m8}) we get, for any $k\geq 0$ and $\sigma >0$, that 
\begin{align}
& \Delta _{f}\left( \left\vert \nabla ^{k}\mathrm{Rm}\right\vert
^{2}S^{-\sigma }\right) \geq S^{-\sigma }\left( 2\left\vert \nabla ^{k+1}%
\mathrm{Rm}\right\vert ^{2}+\left( k+2\right) \left\vert \nabla ^{k}\mathrm{%
Rm}\right\vert ^{2}\right)  \label{iw0} \\
& -cS^{-\sigma }\Sigma _{j=0}^{k}\left\vert \nabla ^{j}\mathrm{Rm}%
\right\vert \left\vert \nabla ^{k-j}\mathrm{Rm}\right\vert \left\vert \nabla
^{k}\mathrm{Rm}\right\vert  \notag \\
& +\left\vert \nabla ^{k}\mathrm{Rm}\right\vert ^{2}\left( -\sigma
S^{-\sigma }+2\sigma \left\vert \mathrm{Ric}\right\vert ^{2}S^{-\sigma
-1}+\sigma \left( \sigma +1\right) \left\vert \nabla S\right\vert
^{2}S^{-\sigma -2}\right)  \notag \\
& +2\left\langle \nabla \left\vert \nabla ^{k}\mathrm{Rm}\right\vert
^{2},\nabla S^{-\sigma }\right\rangle .  \notag
\end{align}%
Observe that%
\begin{equation*}
2\left\langle \nabla \left\vert \nabla ^{k}\mathrm{Rm}\right\vert
^{2},\nabla S^{-\sigma }\right\rangle \geq -2\left\vert \nabla ^{k+1}\mathrm{%
Rm}\right\vert ^{2}S^{-\sigma }-2\sigma ^{2}\left\vert \nabla S\right\vert
^{2}S^{-\sigma -2}\left\vert \nabla ^{k}\mathrm{Rm}\right\vert ^{2}.
\end{equation*}%
This implies the function $w:=\left\vert \nabla ^{k}\mathrm{Rm}\right\vert
^{2}S^{-\sigma }$ satisfies 
\begin{eqnarray}
\Delta _{f}w &\geq &\left( k+2-\sigma +\left( \sigma -\sigma ^{2}\right)
\left\vert \nabla \ln S\right\vert ^{2}\right) w  \label{iw1} \\
&&-c\Sigma _{j=0}^{k}\left\vert \nabla ^{j}\mathrm{Rm}\right\vert \left\vert
\nabla ^{k-j}\mathrm{Rm}\right\vert \left\vert \nabla ^{k}\mathrm{Rm}%
\right\vert S^{-\sigma }.  \notag
\end{eqnarray}

If instead in (\ref{iw0}) we use

\begin{eqnarray*}
&&2\left\langle \nabla \left\vert \nabla ^{k}\mathrm{Rm}\right\vert
^{2},\nabla S^{-\sigma }\right\rangle \\
&=&\left\langle \nabla \left( \left\vert \nabla ^{k}\mathrm{Rm}\right\vert
^{2}S^{-\sigma }S^{\sigma }\right) ,\nabla S^{-\sigma }\right\rangle
+\left\langle \nabla \left\vert \nabla ^{k}\mathrm{Rm}\right\vert
^{2},\nabla S^{-\sigma }\right\rangle \\
&\geq &\left\langle \nabla \left( \left\vert \nabla ^{k}\mathrm{Rm}%
\right\vert ^{2}S^{-\sigma }\right) ,\nabla S^{-\sigma }\right\rangle
S^{\sigma }+\left\vert \nabla ^{k}\mathrm{Rm}\right\vert ^{2}S^{-\sigma
}\left\langle \nabla S^{\sigma },\nabla S^{-\sigma }\right\rangle \\
&&-2\sigma \left\vert \nabla ^{k+1}\mathrm{Rm}\right\vert \left\vert \nabla
S\right\vert \left\vert \nabla ^{k}\mathrm{Rm}\right\vert S^{-\sigma -1} \\
&\geq &-\sigma \left\langle \nabla \left( \left\vert \nabla ^{k}\mathrm{Rm}%
\right\vert ^{2}S^{-\sigma }\right) ,\nabla \ln S\right\rangle -\frac{3}{2}%
\sigma ^{2}\left\vert \nabla ^{k}\mathrm{Rm}\right\vert ^{2}\left\vert
\nabla S\right\vert ^{2}S^{-\sigma -2} \\
&&-2\left\vert \nabla ^{k+1}\mathrm{Rm}\right\vert ^{2}S^{-\sigma },
\end{eqnarray*}%
then the function $w:=\left\vert \nabla ^{k}\mathrm{Rm}\right\vert
^{2}S^{-\sigma }$ satisfies 
\begin{eqnarray}
\Delta _{F}w &\geq &\left( k+2-\sigma +\left( \sigma -\frac{\sigma ^{2}}{2}%
\right) \left\vert \nabla \ln S\right\vert ^{2}\right) w  \label{iw} \\
&&-c\Sigma _{j=0}^{k}\left\vert \nabla ^{j}\mathrm{Rm}\right\vert \left\vert
\nabla ^{k-j}\mathrm{Rm}\right\vert \left\vert \nabla ^{k}\mathrm{Rm}%
\right\vert S^{-\sigma },  \notag
\end{eqnarray}%
where $F:=f-\sigma \ln S.$

\section{\label{Curvature}Curvature estimates for shrinkers}

In this section, we establish some localized derivative estimates for the
curvature tensor of a gradient shrinking Ricci soliton. The estimates will
be applied in next section to prove Theorem \ref{cyl}.

Throughout this section, $(M,g)$ denotes an $n$-dimensional gradient
shrinking Ricci soliton with bounded curvature. Hence, we may assume that (%
\ref{Rm}) holds everywhere on $M.$

Everywhere in this paper, we will denote by $\left\{ e_{1},e_{2},\cdots
,e_{n}\right\} $ a local orthonormal frame of $M$ with 
\begin{equation*}
e_{n}:=\frac{\nabla f}{\left\vert \nabla f\right\vert }.
\end{equation*}%
Clearly, $e_{n}$ is a unit normal vector to $\Sigma \left( t\right) $ and $%
\left\{ e_{1},e_{2},\cdots ,e_{n-1}\right\} $ a local orthonormal frame of $%
\Sigma \left( t\right) .$ Throughout this paper, the indices $%
a,b,c,d=1,2,\cdots ,n-1$ and $i,j,k,l=1,2,\cdots ,n$.
 In this notation, the second fundamental form of $%
\Sigma \left( t\right) $ is given by 
\begin{equation}
h_{ab}=\frac{f_{ab}}{\left\vert \nabla f\right\vert },  \label{2ff}
\end{equation}%
for any $a,b=1,2,\cdots ,n-1.$

By (\ref{m8}) we have that 
\begin{eqnarray}
\left\vert R_{ijkn}\right\vert &=&\frac{\left\vert R_{ijkl}f_{l}\right\vert 
}{\left\vert \nabla f\right\vert }  \label{d1} \\
&=&\frac{1}{\left\vert \nabla f\right\vert }\left\vert \nabla
_{j}R_{ik}-\nabla _{i}R_{jk}\right\vert  \notag \\
&\leq &\frac{2\left\vert \nabla \mathrm{Ric}\right\vert }{\left\vert \nabla
f\right\vert }.  \notag
\end{eqnarray}

Denote with

\begin{eqnarray}
&&\overset{\circ }{R}_{ab}:=R_{ab}-\frac{1}{n-1}S\,g_{ab},  \label{U,V,W} \\
&&U_{abcd}:=\frac{1}{\left( n-1\right) \left( n-2\right) }S\left(
g_{ac}g_{bd}-g_{ad}g_{bc}\right),  \notag \\
&&\overset{\circ }{R}_{abcd}:=R_{abcd}-U_{abcd},  \notag \\
&&V_{abcd}:=\frac{1}{n-3}\left( \overset{\circ }{R}_{ac}g_{bd}+\overset{%
\circ }{R}_{bd}g_{ac}-\overset{\circ }{R}_{ad}g_{bc}-\overset{\circ }{R}%
_{bc}g_{ad}\right),  \notag \\
&&W_{abcd}:=R_{abcd}-U_{abcd}-V_{abcd},  \notag
\end{eqnarray}%
where $a,b,c,d=1,2,\cdots ,n-1$.
It should be pointed out that $W$ is not the Weyl curvature tensor of the
manifold $\left( M,g\right)$, restricted to the level set $\Sigma \left( t\right)$, rather it is an approximation of the one of $\Sigma \left( t\right)$.

Denote 
\begin{eqnarray*}
&&\left\vert \overset{\circ }{\mathrm{Ric}_{\Sigma }}\right\vert
^{2}:=\left\vert \overset{\circ }{R}_{ab}\right\vert ^{2}, \\
&&\left\vert \overset{\circ }{\mathrm{Rm}}_{\Sigma }\right\vert
^{2}:=\left\vert \overset{\circ }{R}_{abcd}\right\vert ^{2}.
\end{eqnarray*}%
We now state the main result of this section. Fix $t_{0}>0$ large enough,
depending only on dimension $n$ and the constant $A_{0}$ in (\ref{Rm}).
Since $S\leq nA_{0}$, using Hamilton's identity $S+\left\vert \nabla
f\right\vert ^{2}=f$ we get that the level sets $\Sigma \left( t\right) $ of 
$\ f$ are all smooth for $t\geq t_{0}$. Also fix some $T>t_{0}$. We have the
following.

\begin{theorem}
\label{Curv}Let $\left( M,g,f\right) $ be an $n$-dimensional, complete,
gradient shrinking Ricci soliton with bounded curvature such that 
\begin{eqnarray}
\ \left\vert \overset{\circ }{\mathrm{Rm}}_{\Sigma }\right\vert ^{2} &\leq
&\eta _{1}S^{2}\text{ \ on }D\left( T\right) \backslash D\left( t_{0}\right),
\label{p} \\
S &\geq &\eta _{2}\text{ \ on }\Sigma \left( t_{0}\right)  \notag
\end{eqnarray}%
for some $\eta _{1},\eta _{2}>0.$ Then for each $k\geq 0,$ there exists
constant $c_{k}>0$ such that 
\begin{equation*}
\left\vert \nabla ^{k}\mathrm{Rm}\right\vert ^{2}\leq c_{k}\,S^{k+2}\text{ \
on }D\left( T\right) \backslash D\left( t_{0}\right).
\end{equation*}
\end{theorem}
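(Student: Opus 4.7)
My plan is to prove Theorem~\ref{Curv} by induction on $k$, running a weighted maximum principle on the compact domain $D(T)\setminus D(t_0)$ applied to quantities of the form $w=|\nabla^k \mathrm{Rm}|^2 S^{-(k+2)}$. The differential inequalities \eqref{iw0}--\eqref{iw} drive the argument; boundary control on $\Sigma(t_0)$ follows from combining Shi's estimates \eqref{Rm} with the hypothesis $S\geq \eta_2$, and the constants $c_k$ must be kept independent of $T$ so that the conclusion propagates globally along the end.

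For the base case $k=0$, I would split $|\mathrm{Rm}|^2$ into ``tangential'' and ``normal'' contributions with respect to the level set $\Sigma(t)$. The tangential part $\sum_{a,b,c,d\leq n-1} R_{abcd}^2$ is bounded by $c\,S^2$ directly from the decomposition \eqref{U,V,W} and the hypothesis, since $|U_{abcd}|\leq c\,S$ and $|V_{abcd}|^2+|W_{abcd}|^2\leq |\overset{\circ}{\mathrm{Rm}}_\Sigma|^2\leq \eta_1 S^2$. For the normal components $R_{ijkn}$, identities \eqref{Rmf}--\eqref{d1} give $|R_{ijkn}|\leq c|\nabla \mathrm{Ric}|/\sqrt{f}$, so using \eqref{CLY} in the form $1/f\leq S/C_0$, the task reduces to showing $|\nabla \mathrm{Ric}|^2\leq c\,S$. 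This is essentially the $k=1$ bound, and the plan is to close the $k=0$ and $k=1$ estimates together by working with a combined test function of the form $P=\alpha|\mathrm{Rm}|^2 S^{-2}+|\nabla \mathrm{Rm}|^2 S^{-3}$ for suitably large $\alpha$, and using \eqref{iw0}--\eqref{iw} to produce a differential inequality with positive principal part.

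For the inductive step $k\geq 2$, assume $|\nabla^j \mathrm{Rm}|^2\leq c_j S^{j+2}$ for $j<k$ and set $w=|\nabla^k \mathrm{Rm}|^2 S^{-(k+2)}$. Applying \eqref{iw1} with $a=k+2$ gives a differential inequality for $w$ whose $w$-coefficient $k+2-a$ vanishes; by the inductive hypothesis each cross term $|\nabla^j \mathrm{Rm}||\nabla^{k-j}\mathrm{Rm}||\nabla^k \mathrm{Rm}|S^{-(k+2)}$ is bounded by $c\sqrt{w}+c\,w$ via Young's inequality. The remaining unfavorable term is $-(k+1)(k+2)|\nabla \ln S|^2 w$, which I would absorb by adding a multiple of the auxiliary quantity $|\nabla^{k-1}\mathrm{Rm}|^2 S^{-(k+1)}$ (already bounded by induction) to the test function: its positive Bochner term $2|\nabla^k \mathrm{Rm}|^2 S^{-(k+1)}=2Sw$, combined with the $2|\nabla^{k+1}\mathrm{Rm}|^2 S^{-(k+2)}$ contribution retained from \eqref{iw0} and Kato's inequality, should dominate the bad gradient term at an interior maximum and yield $w\leq c_k$.

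The chief obstacle is the base case. Since \eqref{m8} only yields the weak control $|\nabla \ln S|^2\leq c\,f$ and $f$ diverges along the end, the $|\nabla \ln S|^2$-contributions in \eqref{iw1}--\eqref{iw} are not small, which is precisely what forces the $k=0$ and $k=1$ estimates to be coupled and what makes retaining the full $2|\nabla^{k+1}\mathrm{Rm}|^2$ gradient term in \eqref{iw0} essential throughout. I expect that a precise choice of the parameters $\alpha$ and $a$, together with repeated invocation of \eqref{CLY} and of the normal-component identity \eqref{Rmf} whenever factors of $f$ appear, closes the argument.
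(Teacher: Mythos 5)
Your identification of the central obstacle---the unbounded factor $|\nabla\ln S|^2$ in \eqref{iw1}---is exactly right, but the absorption mechanism you propose for it does not close. The auxiliary quantity $|\nabla^{k-1}\mathrm{Rm}|^2S^{-(k+1)}$ does contribute a positive Bochner term $2S\,w$, but this is $O(S)\,w$ and $S$ is allowed to decay (only $S\geq C_0/f$ is known from \eqref{CLY}), whereas the bad term $-(k+1)(k+2)|\nabla\ln S|^2w$ is $O(1)\,w$ even after one has established a preliminary bound $|\nabla S|\leq cS$---and a priori it is $O(f)\,w$. So the positive contribution is asymptotically too weak to dominate at an interior maximum along the end; the maximum principle yields no information. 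Also, the full $2|\nabla^{k+1}\mathrm{Rm}|^2S^{-(k+2)}$ term from \eqref{iw0} cannot be ``retained'' alongside \eqref{iw1}: it was already spent absorbing half of the cross term $2\langle\nabla|\nabla^k\mathrm{Rm}|^2,\nabla S^{-a}\rangle$ in passing from \eqref{iw0} to \eqref{iw1}, and Kato's inequality does not recover it.

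What the paper's proof supplies, and what your plan is missing, is a two-part structure. First (Proposition~\ref{D1}) one proves the intermediate estimates $|\nabla^k\mathrm{Rm}|^2\leq c_kS$ and then $|\nabla^k\mathrm{Rm}|^2\leq c_kS^2$ on $D(T)\setminus D(t_0)$, using $f$-dependent exponential cut-offs $\psi(f)$ plus integration along integral curves of $\nabla f$ through the outer collar $D(T)\setminus D(T-2\sqrt T)$; this makes the constants $T$-independent and, crucially, delivers $|\nabla\ln S|^2\leq c$. Second (Proposition~\ref{D2}), the bound on $|\nabla\ln S|^2$ is bootstrapped from $O(1)$ to $O(S^\alpha)$ by proving $|\nabla\mathrm{Rm}|^2\leq cS^{2+\alpha}$ first with $a=2+\alpha$ (the exponent at which the coefficient in \eqref{iw}, not \eqref{iw1}, nearly vanishes), and only then pushing to $a=k+2$, by which point the bad term is $O(S^{3/4})\,w$ and harmless. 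Throughout Proposition~\ref{D2} the maximum principle is run not for $\Delta_f$ on $D(T)\setminus D(t_0)$ directly but via the decomposition $\Delta w=\Delta_\Sigma w+w_{nn}+Hw_n$ on the level sets $\Sigma(t)$: the normal-derivative contributions $w_{nn}$ and $Hw_n$ are shown to be bounded separately using \eqref{Sc}, \eqref{Sij} and the derivative identities coming from \eqref{m8}, and the tangential operator $\Delta_\Sigma$ carries the maximum principle. Without this level-set splitting there is no clean maximum-principle argument, and without the intermediate bootstrapping there is no smallness in $|\nabla\ln S|^2$ to exploit. Your plan would need to incorporate both ideas to succeed.
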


For given $C_{0}$ from (\ref{CLY}) and $A_{k}$ from (\ref{Rm}), the constant 
$c_{k}$ in Theorem \ref{Curv} only depends on $n$, $\eta _{1}$, $\eta _{2}$, 
$C_{0}$ and $A_{0}$,....,$A_{Kk},$ where $K$ is an absolute constant ($K=100$
suffices). We stress that all $c_k$ are independent of $t_0$ and $T$. 

As a useful
corollary of this theorem, if

\begin{eqnarray}
\ \left\vert \mathrm{Rm}\right\vert ^{2} &\leq &\eta _{1}S^{2}\text{ \ \ on }%
D\left( T\right) \backslash D\left( t_{0}\right),  \label{p'} \\
S &\geq &\eta _{2}\text{ \ on }\Sigma \left( t_{0}\right),  \notag
\end{eqnarray}%
then 
\begin{equation*}
\left\vert \nabla ^{k}\mathrm{Rm}\right\vert ^{2}\leq c_{k}S^{k+2}\text{ \
on }D\left( T\right) \backslash D\left( t_{0}\right).
\end{equation*}%
In fact, in the proof of Theorem \ref{Curv}, we will show that (\ref{p})
implies (\ref{p'}). The converse is obviously true.

According to Theorem \ref{MW}, (\ref{p'}) is true for $T=\infty $ on a four
dimensional gradient shrinking Ricci soliton with bounded scalar curvature.
Hence, we obtain the following.

\begin{corollary}
\label{curv_4d}Let $\left( M,g,f\right) $ be a complete, four dimensional,
gradient shrinking Ricci soliton with bounded scalar curvature. Then for
each $k\geq 0,$ there exists constant $c_{k}>0$ so that 
\begin{equation*}
\left\vert \nabla ^{k}\mathrm{Rm}\right\vert ^{2}\leq c_{k}\,S^{k+2}\text{ \
on }M.
\end{equation*}
\end{corollary}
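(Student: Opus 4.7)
The plan is to derive the corollary by feeding the pinching estimate of Theorem \ref{MW} into the form of Theorem \ref{Curv} stated under hypothesis (\ref{p'}), then handling the compact core of $M$ separately. If $\mathrm{Rm}\equiv 0$ on $M$ the statement is trivial, so throughout I assume $M$ is not flat.

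First I would invoke Theorem \ref{MW}: since $S$ is bounded, there exists $c>0$ with $|\mathrm{Rm}|\leq c\,S$ on all of $M$, which is exactly hypothesis (\ref{p'}) with $\eta_{1}=c^{2}$. I would then pick $t_{0}$ large enough that, by the asymptotic comparison (\ref{asympt}) together with boundedness of $S$, the sublevel set $D(t_{0})$ is compact with smooth boundary $\Sigma(t_{0})$. Chen's result recorded in Section \ref{Prelim} gives $S>0$ everywhere on $M$, and compactness of $\Sigma(t_{0})$ then produces a positive lower bound $S\geq \eta_{2}$ on $\Sigma(t_{0})$, so both parts of (\ref{p'}) are verified.

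Next, I would apply the useful corollary of Theorem \ref{Curv} recorded in the remarks immediately after it, which states that hypothesis (\ref{p'}) alone suffices for the desired derivative bound. For every $T>t_{0}$, this produces constants $c_{k}=c_{k}(n,\eta_{1},\eta_{2},C_{0},A_{0},\ldots,A_{Kk})$ — crucially, independent of $T$ — such that $|\nabla^{k}\mathrm{Rm}|^{2}\leq c_{k}\,S^{k+2}$ on $D(T)\setminus D(t_{0})$. Letting $T\to\infty$ extends the bound to $M\setminus D(t_{0})$.

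Finally, to cover the compact region $D(t_{0})$, I note that $|\mathrm{Rm}|\leq c\,S$ and the bound on $S$ give bounded curvature on $M$, so Shi's estimates (\ref{Rm}) yield $|\nabla^{k}\mathrm{Rm}|\leq A_{k}$ on $M$; meanwhile $S$ attains a positive minimum on the compact set $D(t_{0})$ by Chen's theorem. Hence $|\nabla^{k}\mathrm{Rm}|^{2}/S^{k+2}$ is bounded on $D(t_{0})$, and after possibly enlarging $c_{k}$ the estimate holds on all of $M$. There is no substantive obstacle beyond verifying that the constants extracted from the Theorem \ref{Curv} corollary do not deteriorate as $T\to\infty$, which is immediate from the explicit constant dependence recorded after Theorem \ref{Curv}; all the genuine work sits inside Theorem \ref{MW} and Theorem \ref{Curv}.
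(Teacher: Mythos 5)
Your proposal is correct and follows essentially the same route as the paper: invoke Theorem \ref{MW} to verify hypothesis (\ref{p'}) with $T=\infty$, then apply Theorem \ref{Curv} (whose constants, as recorded after its statement, do not depend on $T$) to get the bound outside a compact sublevel set, with the compact core $D(t_0)$ and the flat case handled by Shi's estimates, positivity of $S$, and triviality, respectively. The paper compresses this into one sentence; you have simply written out the routine verifications.
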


The rest of the section is devoted to proving Theorem \ref{Curv}. First, we
observe that $T$ may be assumed to be large compared to $t_{0}.$ Indeed,
define $\phi _{t}$ by 
\begin{eqnarray*}
\frac{d\phi _{t}}{dt} &=&\frac{\nabla f}{\left\vert \nabla f\right\vert ^{2}}
\\
\phi _{t_{0}} &=&\mathrm{Id}\text{ \ on }\Sigma \left( t_{0}\right).
\end{eqnarray*}%
For a fixed $x\in \Sigma \left( t_{0}\right),$ denote $S\left( t\right)
:=S\left( \phi _{t}\left( x\right) \right),$ where $t\geq t_{0}.$ Then, as $%
\left\langle \nabla S,\nabla f\right\rangle =\Delta S-S+2\left\vert \mathrm{%
Ric}\right\vert ^{2},$ it follows from (\ref{Rm}) that

\begin{equation}
\left\vert \frac{dS}{dt}\right\vert =\frac{\left\vert \left\langle \nabla
S,\nabla f\right\rangle \right\vert }{\left\vert \nabla f\right\vert ^{2}}%
\leq \frac{c_{0}}{t}.  \label{y0}
\end{equation}%
Integrating this in $t$ we get 
\begin{eqnarray}
S\left( t\right) &\geq &S\left( t_{0}\right) -c_{0}\ln \frac{t}{t_{0}}
\label{y1} \\
&\geq &\eta _{2}-c_{0}\ln \frac{t}{t_{0}}.  \notag
\end{eqnarray}%
Hence, if $T\leq e^{\frac{\eta _{2}}{2c_{0}}}t_{0},$ then (\ref{y1}) implies 
$S\geq \frac{1}{2}\eta _{2}$ on $D\left( T\right) \backslash D\left(
t_{0}\right) .$ In this case, Theorem \ref{Curv} follows directly from (\ref%
{Rm}). So we may assume from now on that there exists $\nu >1$, depending
only on $\eta _{2}$, $A_{0}$ and $A_{2}$, satisfying%
\begin{equation}
T\geq \nu t_{0}.  \label{T}
\end{equation}%
Using (\ref{d1}) and (\ref{CLY}), we see from (\ref{p}) that 
\begin{equation}
\left\vert \mathrm{Rm}\right\vert \leq c\left( S+\frac{\left\vert \nabla 
\mathrm{Ric}\right\vert }{\sqrt{f}}\right) \leq c\sqrt{S}\ \ \text{on }%
D\left( T\right) \backslash D\left( t_{0}\right) .  \label{R}
\end{equation}%
The proof of Theorem \ref{Curv} is divided in two parts.

\begin{proposition}
\label{D1}Let $\left( M,g,f\right) $ be an $n$-dimensional, complete,
gradient shrinking Ricci soliton such that (\ref{p}) holds. Then, for any $%
k\geq 0,$ there exists constant $c_{k}$ such that 
\begin{equation}
\left\vert \nabla ^{k}\mathrm{Rm}\right\vert \leq c_{k}S\text{ \ on }D\left(
T\right) \backslash D\left( t_{0}\right) .  \label{I}
\end{equation}
\end{proposition}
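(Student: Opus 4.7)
The plan is to argue by induction on $k$, exploiting the weighted Bochner-type inequalities (\ref{iw1}) and (\ref{iw}) applied to the functions $w=|\nabla^{k}\mathrm{Rm}|^{2}S^{-a}$ for suitable exponents $a$, together with the maximum principle on the compact domain $D(T)\setminus D(t_{0})$. Boundary values on $\Sigma(t_{0})$ are under control because $S\geq \eta_{2}$ there and Shi's estimate (\ref{Rm}) gives $|\nabla^{k}\mathrm{Rm}|\leq A_{k}$; the outer boundary $\Sigma(T)$ is handled by applying the estimate on $D(T')\setminus D(t_{0})$ for $T'\nearrow T$ and verifying that the constants are independent of $T'$. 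Throughout, the key structural input is the soliton identity (\ref{Rmf}) and its consequence (\ref{d1}), which reduce bounds on the full curvature tensor to bounds on its purely tangential components plus suitable derivatives of $\mathrm{Ric}$.

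\textbf{Step 1 (base case $k=0$).} From (\ref{R}) we already have the weaker bound $|\mathrm{Rm}|\leq c\sqrt{S}$. To upgrade this to $|\mathrm{Rm}|\leq cS$, decompose $\mathrm{Rm}$ with respect to the adapted frame $\{e_{1},\ldots,e_{n-1},e_{n}\}$. For the purely tangential components the identity $R_{abcd}=\overset{\circ}{R}_{abcd}+U_{abcd}$ together with the hypothesis $|\overset{\circ}{\mathrm{Rm}}_{\Sigma}|^{2}\leq \eta_{1}S^{2}$ and $|U|\leq cS$ yields $|R_{abcd}|\leq cS$ immediately. For components carrying a normal index, (\ref{d1}) reduces the problem to proving $|\nabla\mathrm{Ric}|^{2}\leq cS$, since then by (\ref{CLY}) $|\nabla\mathrm{Ric}|/\sqrt{f}\leq c\sqrt{S/f}\leq cS$. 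To get $|\nabla\mathrm{Ric}|^{2}\leq cS$, I apply (\ref{iw1}) to $w=|\nabla\mathrm{Rm}|^{2}/S$ (so $k=1$, $a=1$) to get $\Delta_{f}w\geq 2w-c|\mathrm{Rm}|\,w$, and then invoke $|\mathrm{Rm}|\leq c\sqrt{S}$ to close the argument via the maximum principle.

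\textbf{Step 2 (induction).} Assuming $|\nabla^{j}\mathrm{Rm}|\leq c_{j}S$ for all $0\leq j\leq k-1$, I apply (\ref{iw}) with $a=2$ to $w:=|\nabla^{k}\mathrm{Rm}|^{2}S^{-2}$:
\[
\Delta_{F}w\geq k\,w-c\sum_{j=0}^{k}|\nabla^{j}\mathrm{Rm}||\nabla^{k-j}\mathrm{Rm}||\nabla^{k}\mathrm{Rm}|S^{-2}.
\]
For $j=0$ and $j=k$ the offending term is $|\mathrm{Rm}||\nabla^{k}\mathrm{Rm}|^{2}S^{-2}=|\mathrm{Rm}|\,w$, and the base case turns it into $\leq cSw$, which is harmless once we recall that $S$ is bounded above. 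For $1\leq j\leq k-1$ the induction hypothesis gives $|\nabla^{j}\mathrm{Rm}||\nabla^{k-j}\mathrm{Rm}|\leq cS^{2}$, so the corresponding summand is bounded by $c|\nabla^{k}\mathrm{Rm}|=c(wS^{2})^{1/2}$, absorbable by Cauchy--Schwarz. The maximum principle on $D(T)\setminus D(t_{0})$ with the boundary control on $\Sigma(t_{0})$ then yields $w\leq c_{k}$, as desired.

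\textbf{Main obstacle.} The principal difficulty lies in Step 1, where the exponent must be improved from $\sqrt{S}$ to $S$. The clean Bochner/maximum principle argument only closes directly in the regime where $S$ is sufficiently small that the coefficient $2-c\sqrt{S}$ is positive; where $S$ stays bounded below by some $\delta>0$ the inequality $|\mathrm{Rm}|\leq c\sqrt{S}\leq (c/\sqrt{\delta})S$ is automatic, but the two regimes must be joined using the gradient correction term $(a-a^{2}/2)|\nabla\ln S|^{2}w$ in (\ref{iw}) and a careful choice of $a$ near $2$. A secondary, purely technical difficulty is that $w$ is not controlled a priori on the outer boundary $\Sigma(T)$; one circumvents this by proving the bound on $D(T')\setminus D(t_{0})$ with constants independent of $T'\in(\sigma t_{0},T)$ and then passing to the limit $T'\nearrow T$.
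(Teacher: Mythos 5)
Your choice of tools matches the paper --- the Bochner inequalities (\ref{iw1}) and (\ref{iw}) applied to $w=\left\vert \nabla ^{k}\mathrm{Rm}\right\vert ^{2}S^{-a}$, plus the reduction via (\ref{d1}) in the base case --- but there are two genuine gaps. The more serious one is the outer boundary: your plan is to run the maximum principle on $D(T')\setminus D(t_{0})$ and let $T'\nearrow T$, but without a priori control of $w$ on $\Sigma (T')$ (and none exists: by (\ref{CLY}) the scalar curvature on $\Sigma (T')$ may be as small as $C_{0}/T'$, so $w$ may be enormous there) the maximum principle simply does not apply on $D(T')\setminus D(t_{0})$ --- this is not a limit one can pass to. The paper's remedy is the cutoff $\psi (f)$ of (\ref{psi}), chosen to vanish on the outer level set $\Sigma (a_{l-1}T)$, so that $G=\psi ^{2}w$ attains its maximum either on $\Sigma (t_{0})$ or at an interior point where $\psi $ itself is estimated; but this yields the bound only on the shrunken domain $D(a_{l}T)\setminus D(t_{0})$, which forces the descending sequence of domains, and then a separate propagation argument along the integral curves of $\nabla f$ (using $\left\vert dS/dt\right\vert \leq c/t$ and $\left\vert \frac{d}{dt}\left\vert \nabla ^{k}\mathrm{Rm}\right\vert \right\vert \leq c_{k}/t$) is needed to carry the estimate from $D(T-2\sqrt{T})$ out to $\Sigma (T)$. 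None of this localization machinery appears in your proposal, and it is in fact the bulk of the proof.

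The second gap is your single-pass structure. Even granting the cutoff, in the bad case $\psi (x_{0})\leq c/\sqrt{T}$ one must bound $G(x_{0})\leq \frac{c}{T}\left\vert \nabla ^{l}\mathrm{Rm}\right\vert ^{2}S^{-2}(x_{0})$, and Shi's estimate (\ref{Rm}) together with (\ref{CLY}) alone give only $\left\vert \nabla ^{l}\mathrm{Rm}\right\vert ^{2}S^{-2}\leq A_{l}^{2}T^{2}/C_{0}^{2}$, hence $G(x_{0})\leq cA_{l}^{2}T/C_{0}^{2}$, which diverges as $T\to \infty $. The paper therefore first runs a complete preliminary induction establishing $\left\vert \nabla ^{k}\mathrm{Rm}\right\vert ^{2}\leq c_{k}S$ for \emph{all} $k\geq 0$ --- not merely for $k=1$ as your Step 1 contemplates --- which improves the bad-case bound to $\left\vert \nabla ^{l}\mathrm{Rm}\right\vert ^{2}S^{-2}\leq c_{l}S^{-1}\leq c_{l}T/C_{0}$, exactly cancelling the $\frac{c}{T}$. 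Only after that first pass does the power $S^{2}$ become accessible. In addition, your Step 2 dismisses the term $c\left\vert \mathrm{Rm}\right\vert w\leq cSw$ as ``harmless because $S$ is bounded above,'' but the coefficient $k-cS$ in (\ref{iw}) with $a=2$ is not sign-definite; the regime-splitting remark in your final paragraph does rescue interior maximum points, yet without the cutoff and the two-pass bootstrap the argument does not close.
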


\begin{proof}
We first prove by induction on $k\geq 0$ that 
\begin{equation}
\left\vert \nabla ^{k}\mathrm{Rm}\right\vert ^{2}\leq c_{k}S\text{ \ on }%
D\left( a_{k}T\right) \backslash D\left( t_{0}\right) ,  \label{y2}
\end{equation}%
where 
\begin{equation*}
a_{k}:=1-\left( 1-\frac{1}{2^{k}}\right) \frac{1}{\sqrt{T}}.
\end{equation*}%
For $k=0$ we get (\ref{y2}) from (\ref{R}). Let us assume (\ref{y2}) is true
for $k=0,1,..,l-1$ and prove it for $k=l.$ By (\ref{iw1}), on $D\left(
T\right) \backslash D\left( t_{0}\right) $ the function $w:=\left\vert
\nabla ^{l}\mathrm{Rm}\right\vert ^{2}S^{-1}$ satisfies%
\begin{eqnarray*}
\Delta _{f}w &\geq &2w-c\Sigma _{j=0}^{l}\left\vert \nabla ^{j}\mathrm{Rm}%
\right\vert \left\vert \nabla ^{l-j}\mathrm{Rm}\right\vert \left\vert \nabla
^{l}\mathrm{Rm}\right\vert S^{-1} \\
&\geq &w-c\Sigma _{j=0}^{l}\left\vert \nabla ^{j}\mathrm{Rm}\right\vert
^{2}\left\vert \nabla ^{l-j}\mathrm{Rm}\right\vert ^{2}S^{-1}.
\end{eqnarray*}%
By the induction hypothesis,%
\begin{equation*}
\left\vert \nabla ^{j}\mathrm{Rm}\right\vert ^{2}\left\vert \nabla ^{l-j}%
\mathrm{Rm}\right\vert ^{2}S^{-1}\leq c\text{ \ on }D\left( a_{l-1}T\right)
\backslash D\left( t_{0}\right) .
\end{equation*}%
Hence, it follows from above that%
\begin{equation}
\Delta _{f}w\geq w-c_{l}\text{ \ \ on }D\left( a_{l-1}T\right) \backslash
D\left( t_{0}\right) .  \label{y5}
\end{equation}%
Define the cut-off function 
\begin{equation}
\psi \left( f\left( x\right) \right) =\frac{e^{\sqrt{a_{l-1}T}}-e^{\frac{f}{%
\sqrt{a_{l-1}T}}}}{e^{\sqrt{a_{l-1}T}}}  \label{psi}
\end{equation}%
with support in $D\left( a_{l-1}T\right) \backslash D\left( t_{0}\right) $
and let $G:=\psi ^{2}w.$ By (\ref{y5}) we get%
\begin{eqnarray}
\Delta _{f}G &\geq &G-c_{l}+2\psi ^{-1}\left( \Delta _{f}\psi \right)
G-6\psi ^{-2}\left\vert \nabla \psi \right\vert ^{2}G  \label{y6} \\
&&+2\psi ^{-2}\left\langle \nabla G,\nabla \psi ^{2}\right\rangle .  \notag
\end{eqnarray}%
Let $x_{0}$ be the maximum point of $G$ on $D\left( a_{l-1}T\right)
\backslash D\left( t_{0}\right) .$ If $x_{0}\in \Sigma \left( t_{0}\right) ,$
then $G\left( x_{0}\right) \leq c_{l}$ by (\ref{p}) and (\ref{Rm}). So,
without loss of generality, we may assume that $x_{0}$ is an interior point.
If at $x_{0}$ we have $\psi ^{-1}\left( \Delta _{f}\psi \right) -3\psi
^{-2}\left\vert \nabla \psi \right\vert ^{2}\geq 0,$ then applying the
maximum principle to (\ref{y6}) we get $G\left( x_{0}\right) \leq c_{l}.$
Now suppose that%
\begin{equation}
\psi ^{-1}\left( \Delta _{f}\psi \right) -3\psi ^{-2}\left\vert \nabla \psi
\right\vert ^{2}<0\text{ \ at }x_{0}.  \label{y6'}
\end{equation}%
Since%
\begin{eqnarray}
\Delta _{f}\psi &=&\psi ^{\prime }\Delta _{f}\left( f\right) +\psi ^{\prime
\prime }\left\vert \nabla f\right\vert ^{2}  \label{y6''} \\
&=&\frac{e^{\frac{f}{\sqrt{a_{l-1}T}}}}{\sqrt{a_{l-1}T}e^{\sqrt{a_{l-1}T}}}%
\left( f-\frac{n}{2}-\frac{\left\vert \nabla f\right\vert ^{2}}{\sqrt{%
a_{l-1}T}}\right)  \notag \\
&\geq &\frac{1}{2}\frac{e^{\frac{f}{\sqrt{a_{l-1}T}}}}{e^{\sqrt{a_{l-1}T}}}%
\frac{f}{\sqrt{a_{l-1}T}},  \notag
\end{eqnarray}%
by (\ref{y6'}) it follows that 
\begin{eqnarray*}
\frac{1}{2}\frac{e^{\frac{f}{\sqrt{a_{l-1}T}}}}{e^{\sqrt{a_{l-1}T}}}\frac{f}{%
\sqrt{a_{l-1}T}}\psi \left( x_{0}\right) &\leq &\psi \Delta _{f}\psi \\
&<&3\left\vert \nabla \psi \right\vert ^{2} \\
&\leq &3\frac{e^{\frac{2f}{\sqrt{a_{l-1}T}}}}{e^{2\sqrt{a_{l-1}T}}}\frac{f}{%
a_{l-1}T}.
\end{eqnarray*}%
This immediately implies 
\begin{equation*}
\psi \left( x_{0}\right) \leq \frac{c}{\sqrt{T}}.
\end{equation*}%
Hence, we obtain from (\ref{CLY}) that%
\begin{equation*}
G\left( x_{0}\right) \leq \frac{c}{T}\left\vert \nabla ^{l}\mathrm{Rm}%
\right\vert ^{2}S^{-1}\leq c_{l}.
\end{equation*}%
In conclusion, this proves 
\begin{equation}
G\leq c_{l}\text{ \ on \ }D\left( a_{l-1}T\right) \backslash D\left(
t_{0}\right) .  \label{y7}
\end{equation}%
Since $\left( a_{l-1}-a_{l}\right) \sqrt{T}=\frac{1}{2^{l}},$ on $D\left(
a_{l}T\right) \backslash D\left( t_{0}\right) $ by (\ref{psi}) we have%
\begin{eqnarray*}
\psi &\geq &1-e^{\frac{1}{\sqrt{a_{l-1}}}\left( a_{l}-a_{l-1}\right) \sqrt{T}%
} \\
&\geq &1-e^{-2^{-l}}.
\end{eqnarray*}%
By (\ref{y7}) we get that $\left\vert \nabla ^{l}\mathrm{Rm}\right\vert
^{2}S^{-1}\leq c_{l}$ on $D\left( a_{l}T\right) \backslash D\left(
t_{0}\right) .$ This completes the induction step and proves (\ref{y2}). In
particular, we have%
\begin{equation}
\left\vert \nabla ^{k}\mathrm{Rm}\right\vert ^{2}\leq c_{k}S\text{ \ on }%
D\left( T-\sqrt{T}\right) \backslash D\left( t_{0}\right) .  \label{y8}
\end{equation}%
We now prove that for all $k\geq 0$ 
\begin{equation}
\left\vert \nabla ^{k}\mathrm{Rm}\right\vert ^{2}\leq c_{k}S^{2}\text{ \ on }%
D\left( T-2\sqrt{T}\right) \backslash D\left( t_{0}\right) .  \label{y9}
\end{equation}%
For $k=0,$ (\ref{y9}) follows from (\ref{R}) and (\ref{y8}). Indeed, by (\ref%
{y8}), one has 
\begin{equation*}
\left\vert \nabla \mathrm{Ric}\right\vert \leq c\,\sqrt{S}
\end{equation*}%
on $D\left( b_{0}T\right) =D\left( T-\sqrt{T}\right) .$ Plugging this into (%
\ref{R}) and using (\ref{CLY}), one sees that 
\begin{equation*}
\left\vert \mathrm{Rm}\right\vert \leq c\,S
\end{equation*}%
on $D\left( b_{0}T\right) .$\ We now prove (\ref{y9}) for $k\geq 1$.

By (\ref{iw}), on $D\left( T-\sqrt{T}\right) \backslash D\left( t_{0}\right)
,$ the function $w:=\left\vert \nabla ^{k}\mathrm{Rm}\right\vert ^{2}S^{-2}$
satisfies%
\begin{eqnarray}
\Delta _{F}w &\geq &w-c\Sigma _{j=0}^{k}\left\vert \nabla ^{j}\mathrm{Rm}%
\right\vert \left\vert \nabla ^{k-j}\mathrm{Rm}\right\vert \left\vert \nabla
^{k}\mathrm{Rm}\right\vert S^{-2}  \label{y10} \\
&\geq &\frac{1}{2}w-c\Sigma _{j=0}^{k}\left\vert \nabla ^{j}\mathrm{Rm}%
\right\vert ^{2}\left\vert \nabla ^{k-j}\mathrm{Rm}\right\vert ^{2}S^{-2} 
\notag \\
&\geq &\frac{1}{2}w-c_{k},  \notag
\end{eqnarray}%
where $F:=f-2\ln S$ and in the last line we have used (\ref{y8}). Let $%
G:=\psi ^{2}w$ with 
\begin{equation}
\psi :=\frac{e^{\sqrt{T-\sqrt{T}}}-e^{\frac{f}{\sqrt{T-\sqrt{T}}}}}{e^{\sqrt{%
T-\sqrt{T}}}}.  \label{psi_1}
\end{equation}%
By (\ref{y10}) we obtain%
\begin{eqnarray}
\Delta _{F}G &\geq &\frac{1}{2}G-c_{k}+2\psi ^{-1}\left( \Delta _{F}\psi
\right) G-6\psi ^{-2}\left\vert \nabla \psi \right\vert ^{2}G  \label{y11} \\
&&+2\psi ^{-2}\left\langle \nabla G,\nabla \psi ^{2}\right\rangle .  \notag
\end{eqnarray}%
Let $x_{0}$ be the maximum point of $G$ on $D\left( T-\sqrt{T}\right)
\backslash D\left( t_{0}\right) .$ As above, we may assume that $x_{0}$ is
an interior point. If at $x_{0}$ we have $\psi ^{-1}\left( \Delta _{F}\psi
\right) -3\psi ^{-2}\left\vert \nabla \psi \right\vert ^{2}\geq 0,$ then the
maximum principle implies $G\left( x_{0}\right) \leq c_{k}.$ So it remains
to consider the case that%
\begin{equation}
\psi ^{-1}\left( \Delta _{F}\psi \right) -3\psi ^{-2}\left\vert \nabla \psi
\right\vert ^{2}<0\text{ \ at }x_{0}.  \label{y12}
\end{equation}%
As in (\ref{y6''}), we have%
\begin{equation}
\Delta _{f}\psi \geq \frac{1}{2}\frac{e^{\frac{f}{\sqrt{T-\sqrt{T}}}}}{e^{%
\sqrt{T-\sqrt{T}}}}\frac{f}{\sqrt{T-\sqrt{T}}}.  \label{y13}
\end{equation}%
Furthermore, 
\begin{equation}
\left\vert \left\langle \nabla \ln S,\nabla \psi \right\rangle \right\vert =%
\frac{e^{\frac{f}{\sqrt{T-\sqrt{T}}}}}{\sqrt{T-\sqrt{T}}e^{\sqrt{T-\sqrt{T}}}%
}\left\vert \left\langle \nabla S,\nabla f\right\rangle \right\vert S^{-1}.
\label{y14}
\end{equation}%
However, (\ref{y8}) implies that 
\begin{eqnarray}
\left\vert \left\langle \nabla S,\nabla f\right\rangle \right\vert  &\leq
&\left\vert \Delta S\right\vert +S+2\left\vert \mathrm{Ric}\right\vert ^{2}
\label{y14'} \\
&\leq &c\sqrt{S}.  \notag
\end{eqnarray}%
For $c>0$ specified in (\ref{y14'}) and $C_{0}$ the constant in (\ref{CLY}),
we consider the following cases. 

First, assume that 
\begin{equation*}
c\sqrt{S}<C_{0}^{\frac{3}{4}}S^{\frac{1}{4}}\text{ \ at }x_{0}.
\end{equation*}%
Then (\ref{y14'}) implies that%
\begin{eqnarray*}
\left\vert \left\langle \nabla S,\nabla f\right\rangle \right\vert
S^{-1}\left( x_{0}\right)  &\leq &\left( C_{0}S^{-1}\left( x_{0}\right)
\right) ^{\frac{3}{4}} \\
&\leq &f^{\frac{3}{4}}\left( x_{0}\right) ,
\end{eqnarray*}%
where the last line follows from (\ref{CLY}). By (\ref{y14}), this implies
that%
\begin{equation*}
\left\vert \left\langle \nabla \ln S,\nabla \psi \right\rangle \right\vert
\left( x_{0}\right) \leq \frac{e^{\frac{f\left( x_{0}\right) }{\sqrt{T-\sqrt{%
T}}}}}{\sqrt{T-\sqrt{T}}e^{\sqrt{T-\sqrt{T}}}}f^{\frac{3}{4}}\left(
x_{0}\right) .
\end{equation*}%
From (\ref{y13}), we get that at $x_{0}$, the maximum point of $G$ on $%
D\left( T-\sqrt{T}\right) \backslash D\left( t_{0}\right) $, we have 
\begin{equation*}
\Delta _{F}\psi \geq \frac{1}{3}\frac{e^{\frac{f}{\sqrt{T-\sqrt{T}}}}}{e^{%
\sqrt{T-\sqrt{T}}}}\frac{f}{\sqrt{T-\sqrt{T}}}.
\end{equation*}%
Plugging this into (\ref{y12}), we get 
\begin{equation*}
\psi \left( x_{0}\right) \leq \frac{c}{\sqrt{T}}.
\end{equation*}%
Therefore, by (\ref{CLY}) and (\ref{y8}),%
\begin{eqnarray*}
G\left( x_{0}\right)  &=&\psi ^{2}\left( x_{0}\right) w\left( x_{0}\right) 
\\
&\leq &\frac{c}{T}\left\vert \nabla ^{k}\mathrm{Rm}\right\vert ^{2}S^{-2} \\
&\leq &c_{k}.
\end{eqnarray*}%
Finally, assume that 
\begin{equation*}
c\sqrt{S}\geq C_{0}^{\frac{3}{4}}S^{\frac{1}{4}}\ \ \text{at }x_{0},
\end{equation*}%
where $c$ is the constant in (\ref{y14'}) and $C_{0}$ the constant in (\ref%
{CLY}). 

We then get $S\left( x_{0}\right) \geq C_{0}^{3}c^{-4}$, which by (%
\ref{Rm}) proves that $G\left( x_{0}\right) \leq c_{k}$. In conclusion, from
these two cases we get that%
\begin{equation}
G\leq c_{k}\text{ \ on \ }D\left( T-\sqrt{T}\right) \backslash D\left(
t_{0}\right) .  \label{y15}
\end{equation}%
Note by (\ref{psi_1}) we have on $D\left( T-2\sqrt{T}\right) \backslash
D\left( t_{0}\right) $ 
\begin{eqnarray*}
\psi  &\geq &1-e^{-\frac{\sqrt{T}}{\sqrt{T-\sqrt{T}}}} \\
&\geq &1-e^{-1}.
\end{eqnarray*}%
So (\ref{y15}) implies that $\left\vert \nabla ^{k}\mathrm{Rm}\right\vert
^{2}S^{-2}\leq c_{k}$ on $D\left( T-2\sqrt{T}\right) \backslash D\left(
t_{0}\right) ,$ which proves (\ref{y9}).

We now complete the proof of the proposition. Define $\phi _{t}$ by%
\begin{eqnarray}
\frac{d\phi _{t}}{dt} &=&\frac{\nabla f}{\left\vert \nabla f\right\vert ^{2}}
\label{y16'} \\
\phi _{T-2\sqrt{T}} &=&\mathrm{Id}\text{ \ on }\Sigma \left( T-2\sqrt{T}%
\right) .  \notag
\end{eqnarray}%
For $q\in \Sigma \left( t_{1}\right) $ with $T-2\sqrt{T}\leq t_{1}\leq T$,
let $q_{0}\in \Sigma \left( T-2\sqrt{T}\right) $ be such that $\phi
_{t_{1}}\left( q_{0}\right) =q.$ We obtain, as in (\ref{y0}), that 
\begin{equation*}
\left\vert \frac{d}{dt}S\left( \phi _{t}\left( q_{0}\right) \right)
\right\vert \leq \frac{c}{t}.
\end{equation*}%
Integrating this from $t=T-2\sqrt{T}$ to $t=t_{1}$ implies that 
\begin{eqnarray}
S\left( q_{0}\right) &\leq &S\left( q\right) +c\ln \frac{t_{1}}{T-2\sqrt{T}}
\label{y17} \\
&\leq &S\left( q\right) +\frac{c}{\sqrt{T}}.  \notag
\end{eqnarray}%
By (\ref{CLY}) we know that $S\left( q\right) \geq \frac{c}{T}.$ Hence, (\ref%
{y17}) implies that%
\begin{equation*}
S\left( q_{0}\right) \leq c\sqrt{S}\left( q\right) .
\end{equation*}%
Since $q_{0}\in D\left( T-2\sqrt{T}\right) \backslash D\left( t_{0}\right) ,$
by (\ref{y9}) we get that 
\begin{equation}
\left\vert \nabla ^{k}\mathrm{Rm}\right\vert \left( q_{0}\right) \leq c_{k}%
\sqrt{S}\left( q\right) .  \label{y18}
\end{equation}%
Using (\ref{m8}) we compute%
\begin{eqnarray}
&&\frac{d}{dt}\left\vert \nabla ^{k}\mathrm{Rm}\right\vert ^{2}\left( \phi
_{t}\left( q_{0}\right) \right) =\frac{\left\langle \nabla \left\vert \nabla
^{k}\mathrm{Rm}\right\vert ^{2},\nabla f\right\rangle }{\left\vert \nabla
f\right\vert ^{2}}  \label{y19} \\
&=&\frac{1}{\left\vert \nabla f\right\vert ^{2}}\left( \nabla ^{k}\mathrm{Rm}%
\ast \nabla ^{k+2}\mathrm{Rm}+\nabla ^{k}\mathrm{Rm}\ast \nabla ^{k}\mathrm{%
Rm}\right)  \notag \\
&+&\frac{1}{\left\vert \nabla f\right\vert ^{2}}\left( \sum_{j=0}^{k}\nabla
^{k}\mathrm{Rm}\ast \nabla ^{j}\mathrm{Rm}\ast \nabla ^{k-j}\mathrm{Rm}%
\right) .  \notag
\end{eqnarray}%
Therefore, by (\ref{Rm})%
\begin{equation*}
\left\vert \frac{d}{dt}\left\vert \nabla ^{k}\mathrm{Rm}\right\vert \left(
\phi _{t}\left( q_{0}\right) \right) \right\vert \leq \frac{c_{k}}{t}.
\end{equation*}%
Integrating this from $t=T-2\sqrt{T}$ to $t=t_{1}$ and using (\ref{y18}) we
conclude that 
\begin{eqnarray*}
\left\vert \nabla ^{k}\mathrm{Rm}\right\vert \left( q\right) &\leq
&\left\vert \nabla ^{k}\mathrm{Rm}\right\vert \left( q_{0}\right) +c_{k}\ln 
\frac{t_{1}}{T-2\sqrt{T}} \\
&\leq &c_{k}\sqrt{S}\left( q\right) +\frac{c_{k}}{\sqrt{T}}.
\end{eqnarray*}%
Using (\ref{CLY}) again that $S\left( q\right) \geq \frac{c}{T},$ we get 
\begin{equation*}
\left\vert \nabla ^{k}\mathrm{Rm}\right\vert \left( q\right) \leq c_{k}\sqrt{%
S}\left( q\right) ,
\end{equation*}%
for any $q\in D\left( T\right) \backslash D\left( T-2\sqrt{T}\right) .$
Together with (\ref{y9}), this proves that 
\begin{equation}
\left\vert \nabla ^{k}\mathrm{Rm}\right\vert \leq c_{k}\sqrt{S}\text{ \ on }%
D\left( T\right) \backslash D\left( t_{0}\right) .  \label{y20}
\end{equation}

For any $q\in \Sigma \left( t_{1}\right) $ with $T-2\sqrt{T}\leq t_{1}\leq
T, $ let $q_{0}\in \Sigma \left( T-2\sqrt{T}\right) $ be such that $\phi
_{t_{1}}\left( q_{0}\right) =q$, where $\phi _{t}$ is defined by (\ref{y16'}%
). By (\ref{y0}) and (\ref{m8}) we have that 
\begin{eqnarray*}
\left\vert \frac{d}{dt}S\left( \phi _{t}\left( q_{0}\right) \right)
\right\vert &\leq &\frac{c}{t}\left( S+2\left\vert \mathrm{Ric}\right\vert
^{2}+\left\vert \Delta S\right\vert \right) \left( \phi _{t}\left(
q_{0}\right) \right) \\
&\leq &\frac{c}{t}\sqrt{S}\left( \phi _{t}\left( q_{0}\right) \right) ,
\end{eqnarray*}%
where for the last line we used (\ref{y20}). We rewrite this as $\left\vert 
\frac{d}{dt}\sqrt{S}\left( \phi _{t}\left( q_{0}\right) \right) \right\vert
\leq \frac{c}{t}$, and integrate in $t\in \left[ T-2\sqrt{T},t_{1}\right] .$
It follows, as for (\ref{y17}), that 
\begin{eqnarray}
\sqrt{S}\left( \phi _{t}\left( q_{0}\right) \right) &\leq &\sqrt{S}\left(
q\right) +\frac{c}{\sqrt{T}}  \label{y20'} \\
&\leq &c\sqrt{S}\left( q\right) .  \notag
\end{eqnarray}%
In particular, $S\left( q_{0}\right) \leq cS\left( q\right) $. Since $%
q_{0}\in D\left( T-2\sqrt{T}\right) \backslash D\left( t_{0}\right) ,$ by (%
\ref{y9}) we get that 
\begin{equation}
\left\vert \nabla ^{k}\mathrm{Rm}\right\vert \left( q_{0}\right) \leq
c_{k}S\left( q\right) .  \label{y21}
\end{equation}%
By (\ref{y19}), (\ref{y20}) and (\ref{y20'}) we now get 
\begin{eqnarray*}
\left\vert \frac{d}{dt}\left\vert \nabla ^{k}\mathrm{Rm}\right\vert \left(
\phi _{t}\left( q_{0}\right) \right) \right\vert &\leq &\frac{c}{t}\sqrt{S}%
\left( \phi _{t}\left( q_{0}\right) \right) \\
&\leq &\frac{c}{t}\sqrt{S}\left( q\right) .
\end{eqnarray*}%
Integrating from $t=T-2\sqrt{T}$ to $t=t_{1}$ it follows that 
\begin{eqnarray*}
\left\vert \nabla ^{k}\mathrm{Rm}\right\vert \left( q\right) &\leq
&\left\vert \nabla ^{k}\mathrm{Rm}\right\vert \left( q_{0}\right) +\frac{%
c_{k}}{\sqrt{T}}\sqrt{S}\left( q\right) \\
&\leq &c_{k}S\left( q\right) ,
\end{eqnarray*}%
where in last line we used (\ref{y21}) and (\ref{CLY}). This inequality is
true for any $q\in D\left( T\right) \backslash D\left( T-2\sqrt{T}\right) $.
Together with (\ref{y9}), it follows that $\left\vert \nabla ^{k}\mathrm{Rm}%
\right\vert \leq c_{k}S$ on $D\left( T\right) \backslash D\left(
t_{0}\right) .$ This proves the proposition.
\end{proof}

To improve Proposition \ref{D1} we use a different strategy. Let us first
record some useful consequences. Note that (\ref{m8}) and (\ref{I}) imply 
\begin{eqnarray}
\left\vert \left\langle \nabla f,\nabla ^{k}\mathrm{Rm}\right\rangle
\right\vert &\leq &\left\vert \Delta _{f}\left( \nabla ^{k}\mathrm{Rm}%
\right) \right\vert +\left\vert \Delta \left( \nabla ^{k}\mathrm{Rm}\right)
\right\vert  \label{Rm0} \\
&\leq &c_{k}S.  \notag
\end{eqnarray}%
In particular, 
\begin{equation}
\left\vert \left\langle \nabla f,\nabla S\right\rangle \right\vert \leq cS.
\label{Sc}
\end{equation}%
We can easily see from (\ref{m8}) that 
\begin{eqnarray*}
S_{ij}f_{i}f_{j} &=&\left\langle \nabla \left( S_{i}f_{i}\right) ,\nabla
f\right\rangle -f_{ij}S_{i}f_{j} \\
&=&2\left\langle \nabla \left\vert \mathrm{Ric}\right\vert ^{2},\nabla
f\right\rangle +\left\langle \nabla \left( \Delta S\right) ,\nabla
f\right\rangle \\
&&-\frac{3}{2}\left\langle \nabla S,\nabla f\right\rangle +\frac{1}{2}%
\left\vert \nabla S\right\vert ^{2}.
\end{eqnarray*}%
By (\ref{I}) and (\ref{Rm0}) it follows that 
\begin{equation}
\left\vert S_{ij}f_{i}f_{j}\right\vert \leq c\,S.  \label{Sij}
\end{equation}

We now complete the proof of Theorem \ref{Curv} by proving the following.

\begin{proposition}
\label{D2}Let $\left( M,g,f\right) $ be an $n$-dimensional, complete,
gradient shrinking Ricci soliton such that (\ref{p}) holds. Then for any $%
k\geq 0$ there exists $c_{k}>0$ such that 
\begin{equation*}
\left\vert \nabla ^{k}\mathrm{Rm}\right\vert ^{2}\leq c_{k}S^{k+2}\text{ \
on }D\left( T\right) \backslash D\left( t_{0}\right) .
\end{equation*}
\end{proposition}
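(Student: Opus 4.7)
I argue by induction on $k$. The base case $k=0$, that $|\mathrm{Rm}|^2\le c_0 S^2$, is immediate from Proposition~\ref{D1}. Assume $|\nabla^j\mathrm{Rm}|^2\le c_j S^{j+2}$ on $D(T)\setminus D(t_0)$ for all $j<k$, and let $w:=|\nabla^k\mathrm{Rm}|^2 S^{-(k+2)}$. The goal is to bound $w$ by a constant independent of $T$.

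The first substantive step is the sharp gradient estimate $|\nabla S|^2\le cS^2$ on $D(T)\setminus D(t_0)$, equivalently $|\nabla\ln S|$ uniformly bounded. This follows from $\nabla S=2\mathrm{Ric}(\nabla f)$: in the adapted frame $\{e_a,e_n\}$ one has $|\nabla S|^2=4|\nabla f|^2\sum_i R_{in}^2$. The identity (\ref{Sc}) gives $|R_{nn}|=|\langle\nabla S,\nabla f\rangle|/(2|\nabla f|^2)\le cS/|\nabla f|^2$, while (\ref{d1}) combined with Proposition~\ref{D1} gives $|R_{an}|\le c|\nabla\mathrm{Ric}|/\sqrt{f}\le cS/\sqrt{f}$ for $a\le n-1$; together with $|\nabla f|^2\le f$ these imply the claim. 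With this in hand, the Bochner-type inequality (\ref{iw0}) applied at level $k$ with $a=k+2$, combined with the inductive hypothesis and Young's inequality, bounds the interior cross terms $|\nabla^j\mathrm{Rm}||\nabla^{k-j}\mathrm{Rm}||\nabla^k\mathrm{Rm}|S^{-(k+2)}$ (for $1\le j\le k-1$) by $\varepsilon w+C_\varepsilon$, and the remaining ``Ricci'' contributions by a constant times $w$.

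The obstruction is that, in contrast to the choice $a=2$ which drove the proof of Proposition~\ref{D1} (where the coincidence $a-a^2/2=0$ killed the $|\nabla\ln S|^2$ correction in (\ref{iw})), the coefficient of $w$ in $\Delta_f w$ at the target weight $a=k+2$ fails to be favorable. To bridge this I would adopt a Shi-type combination
\[
G:=|\nabla^k\mathrm{Rm}|^2 S^{-(k+2)}+A\,|\nabla^{k-1}\mathrm{Rm}|^2 S^{-(k+1)},
\]
with $A$ chosen large. Applying (\ref{iw0}) at level $k-1$ with $a=k+1$ to the second summand produces a favorable $2|\nabla^k\mathrm{Rm}|^2 S^{-(k+1)}=2Sw$ term, which after multiplication by $A$ is used to absorb the unfavorable contributions to $\Delta_f w$. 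The aim is a differential inequality of the form $\Delta_f G\ge\tfrac{1}{2}G-C$ on $D(T)\setminus D(t_0)$, with constants independent of $T$.

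From here the proof follows the template of Proposition~\ref{D1}: multiply $G$ by the exponential cutoff (\ref{psi_1}) and apply the maximum principle as in (\ref{y11})--(\ref{y15}). At the interior maximum, either the cutoff contribution dominates, yielding $\psi(x_0)\le c/\sqrt{T}$ and hence $G(x_0)\le c_k$ via (\ref{CLY}); or the differential inequality forces $G\le 2C$ directly. This yields $G\le c_k$ on $D(T-2\sqrt{T})\setminus D(t_0)$, and the remaining outer strip $D(T)\setminus D(T-2\sqrt{T})$ is propagated by integrating the ODE (\ref{y19}) for $|\nabla^k\mathrm{Rm}|^2$ along integral curves of $\nabla f/|\nabla f|^2$ of length $O(\sqrt{T})$, with errors controlled by (\ref{CLY}), (\ref{Rm}), and the already-established bound on $D(T-2\sqrt{T})\setminus D(t_0)$. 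The most delicate point is the Shi-type absorption in the third paragraph: the Bochner inequality (\ref{iw0}) loses its favorable sign beyond the weight $a=2$, so bridging to the target $a=k+2$ requires the pairing with the $(k-1)$-th order quantity, and balancing the constant $A$ against the cutoff errors and the Young constants uniformly in $T$ is the principal technical hurdle.
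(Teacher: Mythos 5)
Your plan diverges from the paper's proof at the crucial point and, as written, has a gap that the paper's argument was specifically designed to avoid. Two separate obstructions break the approach.

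\textbf{The cutoff error is not controllable at weight $a=k+2$.} In Proposition~\ref{D1} the argument in the regime $\psi(x_0)\le c/\sqrt{T}$ closes because the a priori information available in the support of the cutoff gives $w(x_0)\le cT$, which the factor $\psi^2\sim T^{-1}$ exactly neutralizes: for $a=1$ one uses $|\nabla^k\mathrm{Rm}|^2\le A_k^2$ and $S\ge C_0/T$; for $a=2$ one uses the freshly established $|\nabla^k\mathrm{Rm}|^2\le cS$. At your target weight $a=k+2$ the best input is Proposition~\ref{D1}, namely $|\nabla^k\mathrm{Rm}|^2\le cS^2$, which gives only $w(x_0)\le cS^{-k}\le c\,T^k$ and hence $\psi^2 w(x_0)\le c\,T^{k-1}$, unbounded for $k\ge 2$. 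Taming the cutoff error at weight $k+2$ would require $|\nabla^k\mathrm{Rm}|^2\lesssim S^{k+1}$ in advance, which is essentially what you are proving.

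\textbf{The Shi-type pairing cannot absorb the $|\nabla\ln S|^2\,w$ term because of an $S$-weight mismatch.} The favorable contribution from the paired term lives at a higher power of $S$: what (\ref{iw0}) at level $k-1$ with $a=k+1$ can produce is at best of the order $A\,S\,w$ (and even that requires delicacy with the cross term $2\langle\nabla|\nabla^{k-1}\mathrm{Rm}|^2,\nabla S^{-(k+1)}\rangle$, which under the crude bound used to derive (\ref{iw1}) cancels the $2Sw$ piece entirely, and under Young against $|\nabla S|\le cS$ only partially survives). The term you need to absorb is $\sim|\nabla\ln S|^2\,w$, which with the available bound $|\nabla\ln S|^2\le c$ from Proposition~\ref{D1} is of order $w$. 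Since $S$ can be as small as $C_0/f$, no constant $A$ makes $A\,S\,w$ dominate $c\,w$ uniformly. Absorption would work if $|\nabla\ln S|^2\lesssim S$, i.e.\ $|\nabla\mathrm{Ric}|^2\lesssim S^3$, but that is precisely the $k=1$ case of the target estimate, so the argument is circular.

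The paper resolves both problems by abandoning the cutoff. The key device is the decomposition $\Delta w=\Delta_\Sigma w+w_{nn}+Hw_n$ on the compact level sets $\Sigma(t)$, combined with upper bounds on the normal contribution: using (\ref{m8}), (\ref{Sc}), (\ref{Sij}), one computes $\langle\nabla f,\nabla u\rangle$, $u_{ij}f_if_j$, and $S_{ij}f_if_j$ from the $\Delta_f$-evolution equations for $\nabla^k\mathrm{Rm}$ (see (\ref{www}), (\ref{wnn}), (\ref{i7'})), and thereby shows $w_{nn}+Hw_n\le c$, later improved to $\le c\,S^{3/4}$. Since $D(T)\setminus D(t_0)$ is compact and at an interior maximum one automatically has $\Delta_\Sigma w\le 0$ and $\langle\nabla f,\nabla w\rangle\ge 0$, the maximum principle closes with constants uniform in $T$ and with no $T^{k-1}$ loss. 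The bootstrap is also staged differently: first $|\nabla\mathrm{Rm}|^2\le cS^{11/4}$, obtained by nudging the weight just past $a=2$ so the $|\nabla\ln S|^2$ coefficient in (\ref{iw}) stays small; this upgrades $|\nabla\ln S|^2\le cS^{3/4}$, enabling a two-index induction giving $|\nabla^k\mathrm{Rm}|^2\le c_{k,p}S^{p+1}$ for all $k\ge p$ (the weight $p+1$ is kept strictly below $k+2$ so the coefficient $(k+2)-a$ in (\ref{iw1}) remains positive); then $S^{k+7/4}$; and finally $S^{k+2}$ via the barrier $v:=w+C_kS^{1/2}$, which is needed because at weight exactly $k+2$ the zeroth-order coefficient of $w$ vanishes. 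The level-set decomposition of the Laplacian and the upper bound on $w_{nn}$ are the crux of the argument and do not appear in your proposal.
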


\begin{proof}
For $k=0$ this follows from (\ref{I}). For the case $k=1,$ we first prove a
weaker statement that%
\begin{equation}
\left\vert \nabla \mathrm{Rm}\right\vert ^{2}\leq cS^{\frac{11}{4}}\text{ \
on }D\left( T\right) \backslash D\left( t_{0}\right) .  \label{a<3}
\end{equation}

For any $2\leq \sigma \leq 3,$ by (\ref{iw}) we have%
\begin{eqnarray*}
\Delta _{F}\left( \left\vert \nabla \mathrm{Rm}\right\vert ^{2}S^{-\sigma
}\right) &\geq& \left( 3-\sigma -\sigma \left( \frac{\sigma }{2}-1\right)
\left\vert \nabla \ln S\right\vert ^{2}\right) \left\vert \nabla \mathrm{Rm}%
\right\vert ^{2}S^{-\sigma }\\
&-& c\left\vert \mathrm{Rm}\right\vert \left\vert
\nabla \mathrm{Rm}\right\vert ^{2}S^{-\sigma }.
\end{eqnarray*}%
Hence, it follows from (\ref{I}) that 
\begin{equation*}
w:=\left\vert \nabla \mathrm{Rm}\right\vert ^{2}S^{-\sigma }
\end{equation*}
satisfies the inequality 
\begin{equation}
\Delta _{F}w\geq \left( \left( 3-\sigma \right) -cS-\sigma \left( \frac{%
\sigma }{2}-1\right) \left\vert \nabla S\right\vert ^{2}S^{-2}\right) w
\label{a2}
\end{equation}%
on $D\left( T\right) \backslash D\left( t_{0}\right) ,$ where $F:=f-\sigma
\ln S.$ We will rewrite this as an inequality on $\Sigma \left( t\right) $.
Note that 
\begin{equation}
\Delta w=\Delta _{\Sigma }w+w_{nn}+Hw_{n},  \label{Delta}
\end{equation}%
where $\Delta _{\Sigma }$ is the Laplacian on $\Sigma \left( t\right) $ and $%
H$ is the mean curvature of $\Sigma \left( t\right) .$ We first estimate $%
w_{nn}=\mathrm{Hess}(w)\left( e_{n},e_{n}\right) $ from above.

Denote by 
\begin{equation*}
u:=\left\vert \nabla \mathrm{Rm}\right\vert ^{2}
\end{equation*}
and write $w=uS^{-\sigma }.$ Then%
\begin{eqnarray}
w_{nn} &=&\frac{1}{\left\vert \nabla f\right\vert ^{2}}\left(
u_{ij}f_{i}f_{j}\right) S^{-\sigma }-\frac{2\sigma }{\left\vert \nabla
f\right\vert ^{2}}\left\langle \nabla u,\nabla f\right\rangle \left\langle
\nabla S,\nabla f\right\rangle S^{-\sigma -1}  \label{www} \\
&&+\frac{\sigma \left( \sigma +1\right) }{\left\vert \nabla f\right\vert ^{2}%
}\left\langle \nabla S,\nabla f\right\rangle ^{2}S^{-\sigma -2}u-\frac{%
\sigma }{\left\vert \nabla f\right\vert ^{2}}\left( S_{ij}f_{i}f_{j}\right)
S^{-\sigma -1}u.  \notag
\end{eqnarray}

We argue that all these terms can be bounded. Note that by (\ref{m8})%
\begin{gather}
2\left\langle \nabla f,\nabla \left( \nabla ^{k}\mathrm{Rm}\right)
\right\rangle =-\left( k+2\right) \nabla ^{k}\mathrm{Rm}  \label{n0'} \\
+\sum_{j=0}^{k}\nabla ^{j}\mathrm{Rm}\ast \nabla ^{k-j}\mathrm{Rm}+2\Delta
\nabla ^{k}\mathrm{Rm}.  \notag
\end{gather}%
Consequently, 
\begin{eqnarray}
\left\langle \nabla u,\nabla f\right\rangle &=&\nabla \mathrm{Rm}\ast \nabla 
\mathrm{Rm}+\mathrm{Rm}\ast \nabla \mathrm{Rm}\ast \nabla \mathrm{Rm}
\label{n0} \\
&&+\nabla ^{3}\mathrm{Rm}\ast \nabla \mathrm{Rm}.  \notag
\end{eqnarray}%
It can be similarly checked by using (\ref{n0'}) and (\ref{n0}) that 
\begin{gather}
\left\langle \nabla \left\langle \nabla u,\nabla f\right\rangle ,\nabla
f\right\rangle =\nabla \mathrm{Rm}\ast \nabla \mathrm{Rm}+\mathrm{Rm}\ast
\nabla \mathrm{Rm}\ast \nabla \mathrm{Rm}  \label{n} \\
+\mathrm{Rm}\ast \mathrm{Rm}\ast \nabla \mathrm{Rm}\ast \nabla \mathrm{Rm}%
+\nabla ^{2}\mathrm{Rm}\ast \nabla \mathrm{Rm}\ast \nabla \mathrm{Rm}  \notag
\\
+\nabla ^{3}\mathrm{Rm}\ast \nabla \mathrm{Rm}+\nabla ^{3}\mathrm{Rm}\ast
\nabla \mathrm{Rm}\ast \mathrm{Rm}+\nabla ^{3}\mathrm{Rm}\ast \nabla ^{3}%
\mathrm{Rm}\ast \mathrm{Rm}  \notag \\
+\nabla ^{3}\mathrm{Rm}\ast \nabla ^{3}\mathrm{Rm}+\nabla ^{5}\mathrm{Rm}%
\ast \nabla \mathrm{Rm}.  \notag
\end{gather}%
Hence, by (\ref{I}) we get $\left\vert \left\langle \nabla \left\langle
\nabla u,\nabla f\right\rangle ,\nabla f\right\rangle \right\vert \leq
c\,S^{2}.$ Moreover, (\ref{n0}) and (\ref{I}) imply 
\begin{eqnarray*}
\left\vert f_{ij}u_{i}f_{j}\right\vert &=&\left\vert \frac{1}{2}\left\langle
\nabla u,\nabla f\right\rangle -R_{ij}f_{j}u_{i}\right\vert \\
&\leq &\frac{1}{2}\left\vert \left\langle \nabla u,\nabla f\right\rangle
\right\vert +\frac{1}{2}\left\vert \left\langle \nabla S,\nabla
u\right\rangle \right\vert \\
&\leq &c\,S^{2}.
\end{eqnarray*}%
This shows%
\begin{eqnarray}
\left\vert u_{ij}f_{i}f_{j}\right\vert &\leq &\left\vert \left\langle \nabla
\left( u_{i}f_{i}\right) ,\nabla f\right\rangle \right\vert +\left\vert
f_{ij}u_{i}f_{j}\right\vert  \label{uij} \\
&\leq &cS^{2}.  \notag
\end{eqnarray}%
Since $2\leq \sigma \leq 3$, using (\ref{CLY}) we get%
\begin{equation*}
\frac{1}{\left\vert \nabla f\right\vert ^{2}}\left\vert
u_{ij}f_{i}f_{j}\right\vert S^{-\sigma }\leq c.
\end{equation*}%
Also, note that by (\ref{Sc}) and (\ref{I})%
\begin{equation*}
\frac{1}{\left\vert \nabla f\right\vert ^{2}}\left\langle \nabla S,\nabla
f\right\rangle ^{2}S^{-\sigma -2}u\leq cS^{3-\sigma }\leq c.
\end{equation*}%
Furthermore, using (\ref{n0}), one finds that 
\begin{equation*}
\frac{1}{\left\vert \nabla f\right\vert ^{2}}\left\vert \left\langle \nabla
u,\nabla f\right\rangle \right\vert \left\vert \left\langle \nabla S,\nabla
f\right\rangle \right\vert S^{-\sigma -1}\leq cS^{3-\sigma }\leq c.
\end{equation*}%
According to (\ref{Sij}), we have%
\begin{equation*}
\frac{1}{\left\vert \nabla f\right\vert ^{2}}\left\vert
S_{ij}f_{i}f_{j}\right\vert S^{-\sigma -1}u\leq cS^{3-\sigma }\leq c.
\end{equation*}%
From these estimates we get that for any $2\leq \sigma \leq 3,$%
\begin{equation}
w_{nn}\leq c.  \label{wnn}
\end{equation}%
Also, note that%
\begin{equation*}
\left\langle \nabla w,\nabla \ln S\right\rangle =\left\langle \nabla
w,\nabla \ln S\right\rangle _{\Sigma }+\frac{1}{\left\vert \nabla
f\right\vert ^{2}}\left\langle \nabla w,\nabla f\right\rangle \left\langle
\nabla \ln S,\nabla f\right\rangle .
\end{equation*}%
The last term above can be bounded by using (\ref{Sc}) together with%
\begin{eqnarray*}
\left\vert \left\langle \nabla w,\nabla f\right\rangle \right\vert &\leq
&\left\vert \left\langle \nabla u,\nabla f\right\rangle \right\vert
S^{-\sigma }+c\left\vert \left\langle \nabla \ln S,\nabla f\right\rangle
\right\vert S^{-\sigma }u \\
&\leq &cS^{-\sigma +2}.
\end{eqnarray*}%
It follows that 
\begin{equation*}
\sigma \left\langle \nabla w,\nabla \ln S\right\rangle \leq \sigma
\left\langle \nabla w,\nabla \ln S\right\rangle _{\Sigma }+c.
\end{equation*}%
Similarly, using (\ref{2ff}) we have 
\begin{equation*}
\left\vert H\,w_{n}\right\vert \leq \frac{c}{\left\vert \nabla f\right\vert
^{2}}\,\left\vert \left\langle \nabla f,\nabla w\right\rangle \right\vert
\leq c.
\end{equation*}%
Plugging this and (\ref{wnn}) into (\ref{a2}) we obtain%
\begin{eqnarray}
\Delta _{\Sigma }w &\geq &\left\langle \nabla f,\nabla w\right\rangle
-\sigma \left\langle \nabla w,\nabla \ln S\right\rangle _{\Sigma }  \label{w}
\\
&&+\left( \left( 3-\sigma \right) -cS-\sigma \left( \frac{\sigma }{2}%
-1\right) \left\vert \nabla S\right\vert ^{2}S^{-2}\right) w-c,  \notag
\end{eqnarray}%
where $w=\left\vert \nabla \mathrm{Rm}\right\vert ^{2}S^{-\sigma }.$

Let $\sigma =2+\alpha ,$ where $\alpha >0$ is to be determined later. Note
that by (\ref{I}), $\left\vert \nabla S\right\vert ^{2}S^{-2}\leq c.$ It
follows from (\ref{w}) that%
\begin{eqnarray}
\Delta _{\Sigma }w &\geq &\left\langle \nabla f,\nabla w\right\rangle
-\sigma \left\langle \nabla w,\nabla \ln S\right\rangle _{\Sigma }
\label{a3} \\
&&+\left( \frac{1}{2}-cS-c\alpha \right) w-c.  \notag
\end{eqnarray}%
Now we take $\alpha $ small so that $c\,\alpha <\frac{1}{4}.$ Then (\ref{a3}%
) becomes%
\begin{equation}
\Delta _{\Sigma }w\geq \left\langle \nabla f,\nabla w\right\rangle -\sigma
\left\langle \nabla w,\nabla \ln S\right\rangle _{\Sigma }+\left( \frac{1}{4}%
-cS\right) w-c,  \label{a4'}
\end{equation}%
where $w=\left\vert \nabla \mathrm{Rm}\right\vert ^{2}S^{-2-\alpha }.$

Let $x_{0}$ be the maximum point of $w$ in $D\left( T\right) \backslash
D\left( t_{0}\right) $. If $x_{0}\in \Sigma \left( t_{0}\right) ,$ then $%
w\left( x_{0}\right) \leq c$ by (\ref{Rm}). So we may assume that $%
x_{0}\notin \Sigma \left( t_{0}\right) .$ By maximum principle, we have $%
\Delta _{\Sigma }w\leq 0,$ $\left\langle \nabla w,\nabla \ln S\right\rangle
_{\Sigma }=0$ and $\left\langle \nabla f,\nabla w\right\rangle \geq 0$ at $%
x_{0}$. So (\ref{a4'}) implies that $\left( \frac{1}{4}-cS\left(
x_{0}\right) \right) w\left( x_{0}\right) \leq c$. If $S\left( x_{0}\right) <%
\frac{1}{8c}$, it follows that $w\left( x_{0}\right) \leq c$. On the other
hand, if $S\left( x_{0}\right) \geq \frac{1}{8c}$, then (\ref{Rm}) implies $%
w\left( x_{0}\right) \leq c.$ In conclusion, we have proved that%
\begin{equation}
\left\vert \nabla \mathrm{Rm}\right\vert ^{2}S^{-2-\alpha }\leq c\text{ \ on 
}D\left( T\right) \backslash D\left( t_{0}\right) .  \label{a5}
\end{equation}%
Using this estimate, we get from (\ref{w}) that for any $2\leq \sigma \leq 3,
$ the function 
\begin{equation*}
w:=\left\vert \nabla \mathrm{Rm}\right\vert ^{2}S^{-\sigma }
\end{equation*}%
satisfies%
\begin{equation}
\Delta _{\Sigma }w\geq \left\langle \nabla f,\nabla w\right\rangle -\sigma
\left\langle \nabla w,\nabla \ln S\right\rangle _{\Sigma }+\left( \left(
3-\sigma \right) -cS^{\alpha }\right) w-c.  \label{w'}
\end{equation}%
Let $\sigma =\frac{11}{4}.$ Then (\ref{w'}) becomes%
\begin{equation*}
\Delta _{\Sigma }w\geq \left\langle \nabla f,\nabla w\right\rangle -\sigma
\left\langle \nabla w,\nabla \ln S\right\rangle _{\Sigma }+\left( \frac{1}{4}%
-cS^{\alpha }\right) w-c.
\end{equation*}%
Applying the maximum principle as in the proof of (\ref{a5}), one concludes
that $w$ is bounded on $D\left( T\right) \backslash D\left( t_{0}\right) .$
This shows that%
\begin{equation}
\left\vert \nabla \mathrm{Rm}\right\vert ^{2}\leq cS^{\frac{11}{4}}\text{ \
on }D\left( T\right) \backslash D\left( t_{0}\right)   \label{a6}
\end{equation}%
and (\ref{a<3}) is established.

We now prove that for any $k\geq p\geq 1$ there exists a constant $c_{k,p},$
depending on $k$ and $p,$ such that 
\begin{equation}
\left\vert \nabla ^{k}\mathrm{Rm}\right\vert ^{2}\leq c_{k,p}\,S^{p+1}\text{
\ on }D\left( T\right) \backslash D\left( t_{0}\right) .  \label{i1}
\end{equation}

The proof is by induction on $p.$ By (\ref{I}), clearly (\ref{i1}) is true
for $p=1.$ Now assume that (\ref{i1}) holds for $p=1,2,\cdots ,l.$ We will
prove it for $p=l+1.$ That is, if%
\begin{equation}
\left\vert \nabla ^{j}\mathrm{Rm}\right\vert ^{2}\leq c_{j}\,S^{j+1}\text{ \
for all \ }j\leq l  \label{i2}
\end{equation}%
and 
\begin{equation}
\left\vert \nabla ^{j}\mathrm{Rm}\right\vert ^{2}\leq c_{j,l}\,S^{l+1}\text{
\ \ for all \ \ }j>l,  \label{i3}
\end{equation}%
then%
\begin{equation}
\left\vert \nabla ^{k}\mathrm{Rm}\right\vert ^{2}\leq c_{k,l+1}\,S^{l+2}%
\text{\ for all }k\geq l+1.  \label{i4}
\end{equation}

For any $\sigma >0,$ we have by (\ref{iw1}) that 
\begin{align}
\Delta _{f}\left( \left\vert \nabla ^{k}\mathrm{Rm}\right\vert
^{2}S^{-\sigma }\right) & \geq \left( \left( k+2\right) -\sigma -c\sigma
^{2}\left\vert \nabla \ln S\right\vert ^{2}\right) \left\vert \nabla ^{k}%
\mathrm{Rm}\right\vert ^{2}S^{-\sigma }  \label{i4'} \\
& -c\Sigma _{j=0}^{k}\left\vert \nabla ^{j}\mathrm{Rm}\right\vert \left\vert
\nabla ^{k-j}\mathrm{Rm}\right\vert \left\vert \nabla ^{k}\mathrm{Rm}%
\right\vert S^{-\sigma }.  \notag
\end{align}%
Note that (\ref{i2}) and (\ref{i3}) imply that for $k\geq l+1,$ 
\begin{equation*}
\left\vert \nabla ^{j}\mathrm{Rm}\right\vert \left\vert \nabla ^{k-j}\mathrm{%
Rm}\right\vert \left\vert \nabla ^{k}\mathrm{Rm}\right\vert S^{-l-2}\leq
c_{k,l+1},
\end{equation*}%
where $c_{k,l+1}$ is a constant depending on $c_{j}$ from (\ref{i2}) for $%
j\leq l$, and on $c_{h,l}$ from (\ref{i3}) for $l<h\leq k$. It now follows
from (\ref{a6}) and (\ref{i4'}) that for any $k\geq l+1,$ by letting $\sigma
=l+2,$ the function 
\begin{equation*}
w:=\left\vert \nabla ^{k}\mathrm{Rm}\right\vert ^{2}S^{-l-2}
\end{equation*}
satisfies the inequality%
\begin{equation}
\Delta _{f}w\geq \left( 1-c_{k}\sqrt{S}\right) w-c_{k,l+1}.  \label{i5}
\end{equation}%
Now we follow a similar strategy as in the proof of (\ref{a6}) to show that $%
w_{nn}\leq c_{k,l+1}.$ Denote by 
\begin{equation*}
u:=\left\vert \nabla ^{k}\mathrm{Rm}\right\vert ^{2},
\end{equation*}
so that $w=uS^{-l-2}.$ Note that by (\ref{Sc}) and (\ref{i3}), 
\begin{equation*}
\frac{1}{\left\vert \nabla f\right\vert ^{2}}\left\langle \nabla S,\nabla
f\right\rangle ^{2}S^{-l-4}u\leq cS^{-l-1}u\leq c_{k,l+1}
\end{equation*}%
and by (\ref{Sij}) and (\ref{i3}),%
\begin{equation*}
\frac{1}{\left\vert \nabla f\right\vert ^{2}}\left\vert
S_{ij}f_{i}f_{j}\right\vert S^{-l-3}\,u\leq c\,S^{-l-1}u\leq c_{k,l+1}.
\end{equation*}%
Furthermore, according to (\ref{m8}) we have 
\begin{eqnarray}
\left\langle \nabla u,\nabla f\right\rangle &=&\nabla ^{k}\mathrm{Rm}\ast
\nabla ^{k}\mathrm{Rm}+\nabla ^{k}\mathrm{Rm}\ast \nabla ^{k+2}\mathrm{Rm}
\label{i6} \\
&&+\Sigma _{j=0}^{k}\nabla ^{j}\mathrm{Rm}\ast \nabla ^{k-j}\mathrm{Rm}\ast
\nabla ^{k}\mathrm{Rm}.  \notag
\end{eqnarray}%
It follows immediately from (\ref{i2}) and (\ref{i3}) that $\left\vert
\left\langle \nabla u,\nabla f\right\rangle \right\vert \leq
c_{k,l+1}\,S^{l+1}$, where $c_{k,l+1}$ depends on $c_{j}$ from (\ref{i2})
for $j\leq l$ and on $c_{h,l}$ from (\ref{i3}) for $l<h\leq k+2$. Hence,
this proves that 
\begin{equation*}
\frac{1}{\left\vert \nabla f\right\vert ^{2}}\left\vert \left\langle \nabla
u,\nabla f\right\rangle \right\vert \left\vert \left\langle \nabla S,\nabla
f\right\rangle \right\vert S^{-l-3}\leq c_{k,l+1}.
\end{equation*}%
Similarly, we have $\left\vert \left\langle \nabla \left\langle \nabla
u,\nabla f\right\rangle ,\nabla f\right\rangle \right\vert \leq
c_{k,l+1}\,S^{l+1}$. As in (\ref{uij}) we get 
\begin{equation*}
\left\vert u_{ij}f_{i}f_{j}\right\vert S^{-l-1}\leq c_{k,l+1}.
\end{equation*}

The above estimates, together with (\ref{www}), imply%
\begin{equation}
w_{nn}\leq c_{k,l+1}.  \label{i7'}
\end{equation}%
Finally, we also get from above and from (\ref{2ff}) that 
\begin{eqnarray*}
Hw_{n} &\leq &\frac{c}{\left\vert \nabla f\right\vert ^{2}}\left\vert
\left\langle \nabla w,\nabla f\right\rangle \right\vert \\
&\leq &\frac{c}{\left\vert \nabla f\right\vert ^{2}}\left( \left\langle
\nabla u,\nabla f\right\rangle S^{-l-2}+(l+2)\,\left\vert \left\langle
\nabla \ln S,\nabla f\right\rangle \right\vert S^{-l-2}u\right) \\
&\leq &c_{k,l+1}.
\end{eqnarray*}

It is easy to see from (\ref{i5}) and (\ref{Delta}) that 
\begin{equation*}
w:=\left\vert \nabla ^{k}\mathrm{Rm}\right\vert ^{2}S^{-l-2}
\end{equation*}
satisfies%
\begin{equation}
\Delta _{\Sigma }w\geq \left\langle \nabla w,\nabla f\right\rangle +\left(
1-c_{k}\sqrt{S}\right) w-c_{k,l+1}  \label{i8}
\end{equation}%
for any $k\geq l+1.$ By the maximum principle, (\ref{i8}) implies that $%
w\leq c_{k,l+1}$ on $D\left( T\right) \backslash D\left( t_{0}\right) $ for
all $k\geq l+1.$ This proves (\ref{i4}) and completes the induction step.

In conclusion, we have established (\ref{i1}). In particular, for all $p\geq
1,$ there exists $c_{p}>0$ such that 
\begin{equation}
\left\vert \nabla ^{p}\mathrm{Rm}\right\vert ^{2}\leq c_{p}\,S^{p+1}\text{ \
on }D\left( T\right) \backslash D\left( t_{0}\right) .  \label{i9}
\end{equation}

We are now ready to prove an estimate like (\ref{a<3}) for all $k\geq 1,$
that is,%
\begin{equation}
\left\vert \nabla ^{k}\mathrm{Rm}\right\vert ^{2}\leq c_{k}\,S^{k+\frac{7}{4}%
}\text{ \ on }D\left( T\right) \backslash D\left( t_{0}\right) .  \label{ak}
\end{equation}%
Note that (\ref{i9}) implies%
\begin{equation*}
\Sigma _{j=0}^{k}\left\vert \nabla ^{j}\mathrm{Rm}\right\vert \left\vert
\nabla ^{k-j}\mathrm{Rm}\right\vert \leq c_{k}\,S^{\frac{k}{2}+1}.
\end{equation*}%
So from (\ref{i4'}) we get%
\begin{eqnarray*}
\Delta _{f}\left( \left\vert \nabla ^{k}\mathrm{Rm}\right\vert ^{2}S^{-k-%
\frac{7}{4}}\right) &\geq &\left( \frac{1}{4}-c_{k}\,\sqrt{S}\right)
\left\vert \nabla ^{k}\mathrm{Rm}\right\vert ^{2}\,S^{-k-\frac{7}{4}%
}-c_{k}\left\vert \nabla ^{k}\mathrm{Rm}\right\vert S^{-\frac{k}{2}-\frac{3}{%
4}} \\
&\geq &\left( \frac{1}{6}-c_{k}\sqrt{S}\right) \left\vert \nabla ^{k}\mathrm{%
Rm}\right\vert ^{2}S^{-k-\frac{7}{4}}-c_{k}.
\end{eqnarray*}%
Hence the function 
\begin{equation*}
w:=\left\vert \nabla ^{k}\mathrm{Rm}\right\vert ^{2}S^{-k-\frac{7}{4}}
\end{equation*}
satisfies%
\begin{equation}
w_{nn}+Hw_{n}+\Delta _{\Sigma }w\geq \left\langle \nabla f,\nabla
w\right\rangle +\left( \frac{1}{6}-c_{k}\sqrt{S}\right) w-c_{k}.  \label{w1}
\end{equation}%
Following the proof of (\ref{i7'}) it can be seen that $w_{nn}+Hw_{n}\leq
c_{k}.$ Therefore, by applying the maximum principle to (\ref{w1}), we have $%
w\leq c_{k}$ on $D\left( T\right) \backslash D\left( t_{0}\right) .$ This
shows that (\ref{ak}) is indeed true.

We now finish the proof of the proposition by showing%
\begin{equation}
\left\vert \nabla ^{k}\mathrm{Rm}\right\vert ^{2}\leq c_{k}\,S^{k+2}\text{ \
on }D\left( T\right) \backslash D\left( t_{0}\right)  \label{k}
\end{equation}%
for each $k\geq 1.$

Let 
\begin{equation*}
w:=\left\vert \nabla ^{k}\mathrm{Rm}\right\vert ^{2}S^{-k-2}.
\end{equation*}
Using (\ref{i4'}) and (\ref{ak}), we get%
\begin{align}
\Delta _{f}w& \geq -c_{k}\left\vert \nabla \ln S\right\vert ^{2}w-c\Sigma
_{j=0}^{k}\left\vert \nabla ^{j}\mathrm{Rm}\right\vert \left\vert \nabla
^{k-j}\mathrm{Rm}\right\vert \left\vert \nabla ^{k}\mathrm{Rm}\right\vert
S^{-k-2}  \label{k1} \\
& \geq -c_{k}S^{\frac{1}{2}}.  \notag
\end{align}%
On the other hand, it is easy to check that 
\begin{eqnarray}
\Delta _{f}S^{\frac{1}{2}} &=&\frac{1}{2}\left( \Delta _{f}S\right) S^{-%
\frac{1}{2}}-\frac{1}{4}\left\vert \nabla S\right\vert ^{2}S^{-\frac{3}{2}}
\label{k2} \\
&\geq &\frac{1}{2}S^{\frac{1}{2}}\left( 1-cS^{\frac{3}{4}}\right) .  \notag
\end{eqnarray}

From (\ref{k1}) and (\ref{k2}) we see that there exists a constant $C_{k}>0$
such that $\upsilon :=w+C_{k}S^{\frac{1}{2}}$ satisfies 
\begin{equation}
\Delta _{f}\upsilon \geq S^{\frac{1}{2}}\left( 1-c_{k}S^{\frac{3}{4}}\right)
.  \label{k3}
\end{equation}%
We now bound $\upsilon _{nn}$ from above. By (\ref{Sij}), we have%
\begin{equation}
\left( S^{\frac{1}{2}}\right) _{nn}\leq \frac{1}{2}\frac{S_{ij}f_{i}f_{j}}{%
\left\vert \nabla f\right\vert ^{2}}S^{-\frac{1}{2}}\leq cS^{\frac{3}{2}}.
\label{k4}
\end{equation}%
To bound $w_{nn},$ we use (\ref{www}). By (\ref{ak}), we get%
\begin{eqnarray*}
\frac{1}{\left\vert \nabla f\right\vert ^{2}}\left\langle \nabla S,\nabla
f\right\rangle ^{2}S^{-k-4}u &\leq &cS^{-k-1}u \\
&\leq &c_{k}S^{\frac{3}{4}}.
\end{eqnarray*}%
Also, by (\ref{Sij}) and (\ref{ak}),%
\begin{eqnarray*}
\frac{1}{\left\vert \nabla f\right\vert ^{2}}\left\vert
S_{ij}f_{i}f_{j}\right\vert S^{-k-3}u &\leq &cS^{-k-1}u \\
&\leq &c_{k}S^{\frac{3}{4}}.
\end{eqnarray*}%
Furthermore, using (\ref{i6}) we see that $\left\vert \left\langle \nabla
u,\nabla f\right\rangle \right\vert \leq c_{k}S^{k+\frac{7}{4}}$, hence%
\begin{equation*}
\frac{1}{\left\vert \nabla f\right\vert ^{2}}\left\vert \left\langle \nabla
u,\nabla f\right\rangle \right\vert \left\vert \left\langle \nabla S,\nabla
f\right\rangle \right\vert S^{-k-3}\leq c_{k}S^{\frac{3}{4}}.
\end{equation*}%
In a similar way it can be shown that 
\begin{equation*}
\frac{1}{\left\vert \nabla f\right\vert ^{2}}\left\vert
u_{ij}f_{i}f_{j}\right\vert S^{-k-2}\leq c_{k}S^{\frac{3}{4}}.
\end{equation*}%
Combining all these estimates implies that%
\begin{equation}
w_{nn}\leq c_{k}S^{\frac{3}{4}}.  \label{k5}
\end{equation}%
From (\ref{k3}), (\ref{k4}) and (\ref{k5}) it can be easily seen that 
\begin{equation*}
\Delta _{\Sigma }\upsilon \geq \left\langle \nabla \upsilon ,\nabla
f\right\rangle +S^{\frac{1}{2}}\left( 1-c_{k}S^{\frac{1}{4}}\right) .
\end{equation*}%
Therefore, if the maximum of $\upsilon $ does not occur on $\Sigma \left(
t_{0}\right) ,$ then $S^{\frac{1}{4}}\geq \frac{1}{c_{k}}$ at the maximum
point. By (\ref{Rm}), we have $\upsilon \leq c_{k}$ on $D\left( T\right)
\backslash D\left( t_{0}\right) $ and%
\begin{equation*}
\left\vert \nabla ^{k}\mathrm{Rm}\right\vert ^{2}\leq c_{k}S^{k+2}\text{ \
on }D\left( T\right) \backslash D\left( t_{0}\right) .
\end{equation*}%
This proves the proposition.
\end{proof}

Proposition \ref{D2} allows us to establish the following Harnack estimate
for the scalar curvature. Assume that (\ref{p}) holds. For $t\geq t_{0},$
define $\phi _{t}$ as follows.

\begin{eqnarray}
\frac{d\phi _{t}}{dt} &=&\frac{\nabla f}{\left\vert \nabla f\right\vert ^{2}}
\label{S0} \\
\phi _{t_{0}} &=&\mathrm{Id}\text{ \ on }\Sigma \left( t_{0}\right) .  \notag
\end{eqnarray}

For $x\in \Sigma \left( t_{0}\right),$ let $S\left( t\right) :=S\left( \phi
_{t}\left( x\right) \right),$ where $t_{0}\leq t\leq T.$ Then

\begin{eqnarray*}
\frac{dS}{dt} &=&\frac{\left\langle \nabla S,\nabla f\right\rangle }{%
\left\vert \nabla f\right\vert ^{2}} \\
&=&\frac{\Delta S-S+2\left\vert \mathrm{Ric}\right\vert ^{2}}{t-S}.
\end{eqnarray*}%
Using the estimate $\left\vert \Delta S\right\vert \leq cS^{2}$ from
Proposition \ref{D2}, we get

\begin{equation*}
\left\vert \frac{dS}{dt}+\frac{S}{t}\right\vert \leq C_{1}\frac{S^{2}}{t}
\end{equation*}%
for some constant $C_{1}>0.$ This can be rewritten into 
\begin{equation*}
\left\vert \frac{\left( tS\right) ^{\prime }}{\left( tS\right) ^{2}}%
\right\vert \leq \frac{C_{1}}{t^{2}}.
\end{equation*}%
Integrating in $t$ gives 
\begin{equation}
\left\vert \frac{1}{t_{2}S\left( t_{2}\right) }-\frac{1}{t_{1}S\left(
t_{1}\right) }\right\vert \leq C_{1}\,\left( \frac{1}{t_{1}}-\frac{1}{t_{2}}%
\right)  \label{S}
\end{equation}%
for any $t_{1}$ and $t_{2}$ with $t_{0}<t_{1}<t_{2}<T.$ Hence, if there
exists $t_{0}<t_{1}<T$ with $S\left( t_{1}\right) \leq \frac{1}{2C_{1}},$
then 
\begin{equation}
S\left( t\right) \leq \frac{1}{C_{1}}\frac{t_{1}}{t}\text{ \ for all }%
t_{1}\leq t\leq T.  \label{S'}
\end{equation}

\section{\label{Convergence}Ricci shrinkers asymptotic to round cylinder}

In this section, we use the estimates from section \ref{Curvature} to prove
Theorem \ref{cyl}. We continue to denote by $M$ an $n$-dimensional,
complete, gradient shrinking Ricci soliton with bounded curvature, and by $%
\left\{ e_{1},e_{2},\cdots, e_{n}\right\}$ a local orthonormal frame with 
\begin{equation*}
e_{n}:=\frac{\nabla f}{\left\vert \nabla f\right\vert }.
\end{equation*}%
As before, the indices $a,b, c, d=1,2,\cdots, n-1$ and $i, j, k, l=1,2,
\cdots, n.$

In the following, let us assume that (\ref{p}) hold on $D(T)\backslash D(t_0)$. 
By Theorem \ref{Curv} and (\ref{d1}) we have 
\begin{eqnarray}
\left\vert R_{ijkn}\right\vert &\leq &\frac{2\left\vert \nabla \mathrm{Ric}%
\right\vert }{\left\vert \nabla f\right\vert }  \label{a} \\
&\leq &cS^{\frac{3}{2}}f^{-\frac{1}{2}}.  \notag
\end{eqnarray}%
Using (\ref{m8}) we get

\begin{eqnarray*}
\left\vert R_{inkn}\right\vert  &=&\frac{\left\vert
R_{ijkl}f_{j}f_{l}\right\vert }{\left\vert \nabla f\right\vert ^{2}} \\
&=&\frac{\left\vert f_{j}\nabla _{j}R_{ik}-f_{j}\nabla _{i}R_{jk}\right\vert 
}{\left\vert \nabla f\right\vert ^{2}} \\
&\leq &\frac{\left\vert \left\langle \nabla f,\nabla R_{ik}\right\rangle
\right\vert +\left\vert R_{jk}f_{ij}-\nabla _{i}\left( R_{jk}f_{j}\right)
\right\vert }{\left\vert \nabla f\right\vert ^{2}}.
\end{eqnarray*}%
Since 
\begin{equation*}
\left\langle \nabla f,\nabla R_{ik}\right\rangle =\Delta
R_{ik}-R_{ik}+2R_{ijkl}R_{jl},
\end{equation*}%
we obtain from Theorem \ref{Curv} that 
\begin{equation*}
\left\vert \left\langle \nabla f,\nabla R_{ik}\right\rangle \right\vert \leq
cS.
\end{equation*}%
Similarly, we have 
\begin{eqnarray*}
\left\vert R_{jk}f_{ij}-\nabla _{i}\left( R_{jk}f_{j}\right) \right\vert 
&=&\left\vert \frac{1}{2}R_{ik}-R_{jk}R_{ij}-\frac{1}{2}\nabla _{i}\nabla
_{k}S\right\vert  \\
&\leq &cS.
\end{eqnarray*}%
In conclusion, we get from above that 
\begin{equation}
\left\vert R_{inkn}\right\vert \leq cSf^{-1}.  \label{a'}
\end{equation}

Consequently, for $U,V,W$ defined in (\ref{U,V,W}), we have on $D(T)\backslash D(t_0)$,

\begin{equation}
\left\langle U,V\right\rangle =O\left( S^{2}f^{-1}\right), \ \left\langle
U,W\right\rangle =O\left( S^{2}f^{-1}\right), \ \left\langle
V,W\right\rangle =O\left( S^{2}f^{-1}\right)  \label{c4}
\end{equation}%
and 
\begin{eqnarray}
\left\vert U\right\vert ^{2} &=&\frac{2}{\left( n-1\right) \left( n-2\right) 
}S^{2},  \label{c5} \\
\left\vert V\right\vert ^{2} &=&\frac{4}{n-3}\left\vert \overset{\circ }{%
\mathrm{Ric}_{\Sigma }}\right\vert ^{2}+O(S^{2}f^{-1}),  \notag \\
\left\vert \mathrm{Rm}\right\vert ^{2} &=&\left\vert \overset{\circ }{%
\mathrm{Rm}}_{\Sigma }\right\vert ^{2}+\frac{2}{\left( n-1\right) \left(
n-2\right) }S^{2}+O\left( S^{2}f^{-1}\right),  \notag \\
\left\vert \mathrm{Ric}\right\vert ^{2} &=&\left\vert \overset{\circ }{%
\mathrm{Ric}_{\Sigma }}\right\vert ^{2}+\frac{1}{n-1}S^{2}+O\left(
S^{2}f^{-1}\right),  \notag \\
\left\vert \mathrm{Rm}\right\vert ^{2} &=&\left\vert U\right\vert
^{2}+\left\vert V\right\vert ^{2}+\left\vert W\right\vert ^{2}+O\left(
S^{2}f^{-1}\right).  \notag
\end{eqnarray}
Here and below, the constants implicit in the big $O$ notation depend only on $n,\eta_1, \eta_2, C_0$ 
and $A_0,...,A_{Kk}$, as specified in Theorem  \ref{Curv}, so they are independent of $t_0$ and $T$.  

We restate Theorem \ref{cyl} here. Without loss of generality, we assume $M$
has only one end.

\begin{theorem}
\label{cylinder}Let $\left( M,g,f\right) $ be an $n$-dimensional, complete,
gradient shrinking Ricci soliton with $\left\vert \mathrm{Rm}\right\vert
\leq C.$ Assume that there exists a sequence of points $x_{k}\rightarrow
\infty $ such that $\left( M,g,x_{k}\right) $ converges to a round cylinder $%
\mathbb{R}\times \mathbb{S}^{n-1}/\Gamma.$ Then $M$ is smoothly asymptotic
to the same round cylinder.
\end{theorem}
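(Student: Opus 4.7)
The plan is to run a Huisken-type pinching argument on the level sets of $f$, viewed as an approximate Ricci flow driven by the flow $\phi_t$ of $\nabla f/|\nabla f|^2$, using the localized derivative estimates of Theorem~\ref{Curv} to close the maximum principle on the entire end.

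First, initialize the pinching on a single level set. The smooth pointed convergence $(M,g,x_k)\to\mathbb{R}\times\mathbb{S}^{n-1}/\Gamma$ implies $S(x_k)\to S_0:=\tfrac{n-1}{2}$ (the scalar curvature of the cylindrical soliton) and $|\mathring{\mathrm{Rm}}_\Sigma|^2(x_k)/S^2(x_k)\to 0$. On the limit cylinder the potential reads $f=\tfrac{1}{4}r^2+S_0$, so its level sets are the sphere slices, compact of uniformly bounded diameter. Consequently, for $t_k:=f(x_k)\to\infty$ the full level set $\Sigma(t_k)$ lies inside the region of smooth convergence, giving $|\mathring{\mathrm{Rm}}_\Sigma|^2\leq\varepsilon_k S^2$ and $|S-S_0|\leq\varepsilon_k$ on $\Sigma(t_k)$ with $\varepsilon_k\to 0$.

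Next, propagate the pinching outward by bootstrap. Fix $\eta_1>0$ small and let $T^*\in(t_k,\infty]$ be the supremum of $T$ for which the hypothesis (\ref{p}) of Theorem~\ref{Curv} holds on $D(T)\setminus D(t_k)$. Assume for contradiction $T^*<\infty$. On $D(T^*)\setminus D(t_k)$, Theorem~\ref{Curv} gives $|\nabla^j\mathrm{Rm}|^2\leq c_j S^{j+2}$; combined with (\ref{a}), (\ref{a'}) and (\ref{c5}), these bounds identify the ambient curvature with the intrinsic curvature of $\Sigma(t)$ up to errors $O(S^2 f^{-1})$. Studying the evolution of the pinching quantity $P:=|\mathring{\mathrm{Rm}}_\Sigma|^2 S^{-2-\alpha}$ (with small $\alpha>0$) along the flow $\phi_t$, and using the decomposition $\Delta=\Delta_\Sigma+\partial_{nn}+H\partial_n$ employed in the proof of Proposition~\ref{D2}, one extracts from (\ref{iw}) a differential inequality of the form
\begin{equation*}
\Delta_\Sigma P\geq\langle\nabla f,\nabla P\rangle-a\langle\nabla P,\nabla\ln S\rangle_\Sigma+\bigl(\alpha-c\,S^\alpha-c\,f^{-1}\bigr)\,P,
\end{equation*}
in which the reaction $\mathrm{Rm}\ast\mathrm{Rm}$ acting on the traceless part is of order $P$ itself, up to lower-order perturbations controlled by the bounds just obtained. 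Applying the maximum principle on each $\Sigma(t)$ and using the initial smallness $P\leq c\,\varepsilon_k$ on $\Sigma(t_k)$, one concludes $P\leq c\,\varepsilon_k$ throughout $D(T^*)\setminus D(t_k)$, contradicting the choice of $T^*$. Simultaneously, the Harnack inequality (\ref{S'}) together with its upward analogue traps $S$ in $[S_0/2,\,2S_0]$, preserving the other hypothesis of Theorem~\ref{Curv}. Hence $T^*=\infty$ and both the pinching and the two-sided scalar-curvature bound propagate to the whole end~$E$.

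The main obstacle, and the reason the careful estimates of Section~\ref{Curvature} were developed, is this two-sided control of the scalar curvature during the propagation: an upward drift of $S$ would inflate the reaction term in the evolution of $P$, while a downward drift would violate the hypothesis $S\geq\eta_2$ of Theorem~\ref{Curv} and degrade the coercivity coefficient $(k+2-a+\cdots)$ in (\ref{iw}). The polynomial bounds $|\nabla^k\mathrm{Rm}|\leq c_k S^{(k+2)/2}$ together with the Harnack estimate (\ref{S}) are exactly what lock these two controls together. Once pinching and the scalar-curvature bound hold on $E$, the identities (\ref{a}), (\ref{a'}) and (\ref{c5}) force the full ambient curvature tensor and all its covariant derivatives to converge smoothly to those of the cylinder; reparametrising the flow $\phi_t$ by the arclength $s=\int|\nabla f|^{-1}\,dt$ along integral curves of $\nabla f$ then yields the required diffeomorphism $\Phi:L_0\to E\setminus\Omega$ under which $\rho_\lambda^\ast\,\Phi^\ast g\to g_c$ in every $C^k_{\mathrm{loc}}$.
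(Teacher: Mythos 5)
Your overall strategy matches the paper: initialize the pinching on a high level set from the convergence hypothesis, propagate it by a continuity argument using the localized estimates of Theorem~\ref{Curv} to close a maximum principle on the level sets $\Sigma(t)$, then build the asymptotic diffeomorphism. But two steps as written do not hold up, and both are where the real work lies.

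First, the source of the coercivity is misidentified. You claim that (\ref{iw}), applied to $P=|\mathring{\mathrm{Rm}}_\Sigma|^2 S^{-2-\alpha}$, produces a positive reaction coefficient $\alpha$. This is wrong on two counts: (\ref{iw}) is an inequality for $|\nabla^k\mathrm{Rm}|^2 S^{-a}$, not the traceless part; and even for $|\mathrm{Rm}|^2S^{-a}$ with $k=0$ and $a=2+\alpha>2$, the coefficient $k+2-a+(a-\tfrac{a^2}{2})|\nabla\ln S|^2$ is \emph{strictly negative}, since both $2-a=-\alpha$ and $a-\tfrac{a^2}{2}$ are negative. So (\ref{iw}) alone gives you the wrong sign. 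The positive reaction term in the paper's inequality (\ref{r4'}) arises from a genuinely different mechanism: one works with $G=|\mathrm{Rm}|^2 S^{-2}-\tfrac{2}{(n-1)(n-2)}$ (exponent exactly $2$, because $|\mathring{\mathrm{Rm}}_\Sigma|^2$ has no clean evolution equation), uses (\ref{c1}) for $\Delta_f|\mathrm{Rm}|^2$, decomposes the curvature as $U+V+W$ per (\ref{U,V,W}), and proves the Huisken-type cubic estimate $P\geq\sigma S^2|\mathring{\mathrm{Rm}}_\Sigma|^2-cS^4f^{-1}$ (from (\ref{c8})--(\ref{c11})) which holds precisely \emph{because} the pinching is small. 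This cubic inequality is the technical heart of the argument; your phrase ``the reaction $\mathrm{Rm}\ast\mathrm{Rm}$ acting on the traceless part is of order $P$'' gestures at it but does not establish it, and your displayed coefficient $\alpha$ would not close the maximum principle.

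Second, the passage from qualitative pinching to the asymptotic diffeomorphism is omitted. Once one knows $\sup_{\Sigma(t)}|\mathring{\mathrm{Rm}}_\Sigma|^2S^{-2}\to 0$ and $S\to\tfrac{n-1}{2}$, the pulled-back metrics $\widetilde{g}(t)=\phi_t^\ast g$ satisfy $\tfrac{d}{dt}\widetilde{g}_{ab}=\tfrac{1}{|\nabla f|^2}\phi_t^\ast(g_{ab}-2R_{ab})$, whose magnitude is only $o(1/t)$ from the qualitative bounds; this is not integrable and does not produce a limit metric. The paper therefore proves \emph{polynomial} decay rates $|\mathring{\mathrm{Rm}}_\Sigma|\leq cf^{-\delta}$, $|S-\tfrac{n-1}{2}|\leq cf^{-\delta}$, $|\nabla^k\mathrm{Rm}|\leq c_kf^{-d_k}$ via the Lichnerowicz-type equation (\ref{r7}) for $\mathring{R}_{ac}$, a comparison with $c_0f^{-1/n}$, an interpolation argument, an ODE for $S$ along integral curves, and the Bianchi identities (\ref{r15}). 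These rates give $O(t^{-1-\delta})$ integrability and hence the convergence $\widetilde{g}(t)\to g^\infty$. Asserting that the diffeomorphism ``then yields'' the asymptotics skips this entire chapter, which is not a routine step.

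A smaller remark: the condition $S\geq\eta_2$ in (\ref{p}) is only imposed on the fixed slice $\Sigma(t_0)$ and is granted at once by the initialization; it is not a quantity to be propagated in the continuity argument, and the paper proves the global lower bound $S\geq C>0$ via (\ref{S'}) only \emph{after} the pinching has been shown to hold on the whole end. So the ``two-sided trapping'' of $S$ you describe is not part of the bootstrap.
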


\begin{proof}
Without loss of generality, we may assume that $x_{k}$ converges to a point
in $\{0\}\times \mathbb{S}^{n-1}/\Gamma .$ We first claim that $\Sigma
(t_{k}),$ the level set of $f$ containing $x_{k},$ must converge to $%
\{0\}\times \mathbb{S}^{n-1}/\Gamma $. Indeed, consider the vector field
defined on $M\backslash D\left( t_{0}\right) ,$ 
\begin{equation*}
X:=\frac{\nabla f}{\left\vert \nabla f\right\vert }.
\end{equation*}%
Since $\left\vert X\right\vert =1$, we get that $X$ converges smoothly to a
vector field $X_{\infty }$ on $\mathbb{R}\times \mathbb{S}^{n-1}/\Gamma $.
It is easy to see that $X_{\infty }$ is in fact parallel, because 
\begin{eqnarray*}
\left\vert \nabla X\right\vert  &\leq &2\frac{\left\vert \mathrm{Hess}\left(
f\right) \right\vert }{\left\vert \nabla f\right\vert } \\
&\leq &cf^{-\frac{1}{2}}.
\end{eqnarray*}%
This proves that $X_{\infty }$ is the radial vector on $\mathbb{R}\times 
\mathbb{S}^{n-1}/\Gamma $ , and hence the level set corresponding to $x_{k}$
converges to $\{0\}\times \mathbb{S}^{n-1}/\Gamma $. In particular, it
follows that for any $\varepsilon >0$ there exists sufficiently large $t_{0}$
such that

\begin{eqnarray}
\sup_{\Sigma \left( t_{0}\right) }\left\vert \overset{\circ }{\mathrm{Rm}%
_{\Sigma }}\right\vert ^{2}S^{-2} &<&\frac{\varepsilon }{2},  \label{c12} \\
\sup_{\Sigma \left( t_{0}\right) }\left\vert S-\frac{n-1}{2}\right\vert
&<&\varepsilon.  \notag
\end{eqnarray}

\begin{claim}
\label{Conv}For $\varepsilon >0$ and $t_{0}>0$ such that (\ref{c12}) holds
we have 
\begin{equation}
\sup_{\Sigma \left( t\right) }\left\vert \overset{\circ }{\mathrm{Rm}%
_{\Sigma }}\right\vert ^{2}S^{-2}<\varepsilon \text{ \ \ for all } t\geq
t_{0}.  \label{c13}
\end{equation}
\end{claim}

To prove Claim \ref{Conv}, let 
\begin{equation}
T:=\sup \left\{ t:\sup_{\Sigma \left( r\right) }\left\vert \overset{\circ }{%
\mathrm{Rm}_{\Sigma }}\right\vert ^{2}S^{-2}<\varepsilon \text{ \ for all }%
t_{0}\leq r\leq t\right\}.  \label{c14}
\end{equation}%
If $T<\infty,$ then%
\begin{equation}
\sup_{\Sigma \left( T\right) }\left\vert \overset{\circ }{\mathrm{Rm}%
_{\Sigma }}\right\vert ^{2}S^{-2}=\varepsilon.  \label{c14'}
\end{equation}
Note that (\ref{c14}) and (\ref{c12}) imply that (\ref{p}) holds on $D(T)\backslash D(t_0)$, for $\eta_1$ and $\eta_2$ depending only on dimension. Hence, (\ref{c4}) and (\ref{c5}) hold on  $D(T)\backslash D(t_0)$ as well. 

We have the following formula (see Ch. 2.7 in \cite{CLN} or \cite{PW})%
\begin{equation}
\Delta _{f}\left\vert \mathrm{Rm}\right\vert ^{2}=2\left\vert \nabla \mathrm{%
Rm}\right\vert ^{2}+2\left\vert \mathrm{Rm}\right\vert
^{2}-8R_{ijkl}R_{piqk}R_{pjql}-2R_{ijkl}R_{ijpq}R_{pqkl}.  \label{c1}
\end{equation}%
For the function $G$ given by%
\begin{equation*}
G:=\left\vert \mathrm{Rm}\right\vert ^{2}S^{-2}-\frac{2}{\left( n-1\right)
\left( n-2\right) },
\end{equation*}%
using (\ref{c1}) and arguing as in (\ref{iw}) we obtain the following
inequality (cf. Lemma 3.2 in \cite{Hu}).%
\begin{equation}
\Delta _{f}G\geq -2\left\langle \nabla G,\nabla \ln S\right\rangle +4S^{-3}P,
\label{c7}
\end{equation}%
where%
\begin{equation}
P:=-2SR_{ijkl}R_{piqk}R_{pjql}-\frac{1}{2}SR_{ijkl}R_{ijpq}R_{pqkl}+\left%
\vert \mathrm{Rm}\right\vert ^{2}\left\vert \mathrm{Ric}\right\vert ^{2}.
\label{c8}
\end{equation}%
Note that by (\ref{c5}),%
\begin{equation}
G=\left\vert \overset{\circ }{\mathrm{Rm}_{\Sigma }}\right\vert
^{2}S^{-2}+O\left( f^{-1}\right) .  \label{G}
\end{equation}%
By (\ref{c4}) and (\ref{c5}) we get that 
\begin{align}
& \left\vert \mathrm{Rm}\right\vert ^{2}\left\vert \mathrm{Ric}\right\vert
^{2}  \label{c9} \\
& =\left( \left\vert U\right\vert ^{2}+\left\vert V\right\vert
^{2}+\left\vert W\right\vert ^{2}\right) \left( \left\vert \overset{\circ }{%
\mathrm{Ric}_{\Sigma }}\right\vert ^{2}+\frac{1}{n-1}S^{2}\right) +O\left(
S^{4}f^{-1}\right)   \notag \\
& =\left\vert W\right\vert ^{2}\left\vert \overset{\circ }{\mathrm{Ric}%
_{\Sigma }}\right\vert ^{2}+\frac{1}{n-1}S^{2}\left\vert W\right\vert ^{2}+%
\frac{4}{n-3}\left\vert \overset{\circ }{\mathrm{Ric}_{\Sigma }}\right\vert
^{4}  \notag \\
& +\frac{2\left( 3n-7\right) }{\left( n-1\right) \left( n-2\right) \left(
n-3\right) }S^{2}\left\vert \overset{\circ }{\mathrm{Ric}_{\Sigma }}%
\right\vert ^{2}  \notag \\
& +\frac{2}{\left( n-1\right) ^{2}\left( n-2\right) }S^{4}+O\left(
S^{4}f^{-1}\right) .  \notag
\end{align}%
A much longer computation of similar nature implies (see Theorem 3.3 in \cite%
{Hu})%
\begin{eqnarray}
&&-2SR_{ijkl}R_{piqk}R_{pjql}-\frac{1}{2}SR_{ijkl}R_{ijpq}R_{pqkl}
\label{c10} \\
&=&-2SR_{abcd}R_{eagc}R_{ebgd}-\frac{1}{2}SR_{abcd}R_{abeg}R_{egcd}+O\left(
S^{4}f^{-1}\right)   \notag \\
&=&-2SW_{abcd}W_{eagc}W_{ebgd}-\frac{1}{2}SW_{abcd}W_{abeg}W_{egcd}  \notag
\\
&&-\frac{6}{n-3}SW_{abcd}\overset{\circ }{R}_{ac}\overset{\circ }{R}_{bd}-%
\frac{6}{\left( n-1\right) \left( n-2\right) }S^{2}\left\vert \overset{\circ 
}{\mathrm{Ric}_{\Sigma }}\right\vert ^{2}  \notag \\
&&+\frac{8}{\left( n-3\right) ^{2}}S\overset{\circ }{R}_{ab}\overset{\circ }{%
R}_{bc}\overset{\circ }{R}_{ac}-\frac{2}{\left( n-1\right) ^{2}\left(
n-2\right) }S^{4}+O\left( S^{4}f^{-1}\right) .  \notag
\end{eqnarray}%
From (\ref{c8}), (\ref{c9}) and (\ref{c10}) we conclude that

\begin{eqnarray}
P &=&\frac{1}{n-1}S^{2}\left\vert W\right\vert
^{2}-2SW_{abcd}W_{eafc}W_{ebfd}  \label{c11} \\
&&-\frac{1}{2}SW_{abcd}W_{abef}W_{efcd}  \notag \\
&&+\frac{4}{\left( n-1\right) \left( n-2\right) \left( n-3\right) }%
S^{2}\left\vert \overset{\circ }{\mathrm{Ric}_{\Sigma }}\right\vert ^{2}+%
\frac{4}{n-3}\left\vert \overset{\circ }{\mathrm{Ric}_{\Sigma }}\right\vert
^{4}  \notag \\
&&+\frac{8}{\left( n-3\right) ^{2}}S\overset{\circ }{R}_{ab}\overset{\circ }{%
R}_{bc}\overset{\circ }{R}_{ac}+\left\vert W\right\vert ^{2}\left\vert 
\overset{\circ }{\mathrm{Ric}_{\Sigma }}\right\vert ^{2}  \notag \\
&&-\frac{6}{n-3}SW_{abcd}\overset{\circ }{R}_{ac}\overset{\circ }{R}%
_{bd}+O\left( S^{4}f^{-1}\right) .  \notag
\end{eqnarray}%
Therefore, we have%
\begin{eqnarray*}
P &\geq &\frac{1}{n-1}S^{2}\left\vert W\right\vert ^{2}+\frac{4}{\left(
n-1\right) \left( n-2\right) \left( n-3\right) }S^{2}\left\vert \overset{%
\circ }{\mathrm{Ric}_{\Sigma }}\right\vert ^{2} \\
&&-\frac{5}{2}S\left\vert W\right\vert ^{3}-\frac{8}{\left( n-3\right) ^{2}}%
S\left\vert \overset{\circ }{\mathrm{Ric}_{\Sigma }}\right\vert ^{3} \\
&&-\frac{6}{n-3}S\left\vert W\right\vert \left\vert \overset{\circ }{\mathrm{%
Ric}_{\Sigma }}\right\vert ^{2}-cS^{4}f^{-1}.
\end{eqnarray*}%
Since by (\ref{c14}) for all $t_{0}\leq t\leq T,$%
\begin{eqnarray}
\left\vert W\right\vert ^{2}+\frac{4}{n-3}\left\vert \overset{\circ }{%
\mathrm{Ric}_{\Sigma }}\right\vert ^{2} &\leq &\left\vert \overset{\circ }{%
\mathrm{Rm}_{\Sigma }}\right\vert ^{2}+cS^{2}f^{-1}  \label{pinching} \\
&\leq &\varepsilon S^{2}+cS^{2}f^{-1}  \notag \\
&\leq &2\varepsilon S^{2},  \notag
\end{eqnarray}%
it follows that 
\begin{eqnarray*}
P &\geq &\left( \frac{1}{n-1}-c\sqrt{\varepsilon }\right) S^{2}\left\vert
W\right\vert ^{2} \\
&&+\left( \frac{4}{\left( n-1\right) \left( n-2\right) \left( n-3\right) }-c%
\sqrt{\varepsilon }\right) S^{2}\left\vert \overset{\circ }{\mathrm{Ric}%
_{\Sigma }}\right\vert ^{2} \\
&&-c\,S^{4}\,f^{-1}
\end{eqnarray*}%
for some constant $c>0$ depending only on $n.$ In particular, there exists $%
\theta >0$ depending only on $n$ such that%
\begin{equation*}
P\geq \theta S^{2}\left\vert \overset{\circ }{\mathrm{Rm}_{\Sigma }}%
\right\vert ^{2}-c\,S^{4}\,f^{-1}.
\end{equation*}%
As $\left\vert \overset{\circ }{\mathrm{Rm}_{\Sigma }}\right\vert ^{2}\geq
GS^{2}-cS^{2}f^{-1},$ it follows from (\ref{c7}) that on $D\left( T\right)
\backslash D\left( t_{0}\right) $%
\begin{equation}
\Delta G\geq \left\langle \nabla G,\nabla f\right\rangle -2\left\langle
\nabla G,\nabla \ln S\right\rangle +\theta SG-cSf^{-1}.  \label{c15}
\end{equation}%
Note that 
\begin{equation*}
\Delta G=\Delta _{\Sigma }G+G_{nn}+HG_{n},
\end{equation*}%
where $\Delta _{\Sigma }$ is the Laplacian on $\Sigma \left( t\right) .$ We
now bound $G_{nn}$ by a similar argument as in the proof of Proposition \ref%
{D2}. Let $u:=\left\vert \mathrm{Rm}\right\vert ^{2}$ and $w:=\frac{u}{S^{2}}%
.$ Then $G_{nn}=w_{nn}.$ By (\ref{www}) we have%
\begin{equation}
w_{nn}=\frac{f_{i}f_{j}}{\left\vert \nabla f\right\vert ^{2}}\left(
u_{ij}S^{-2}-4u_{i}S_{j}S^{-3}+6S_{i}S_{j}S^{-4}u-2S_{ij}S^{-3}u\right) .
\label{r1}
\end{equation}%
Now,%
\begin{equation*}
u_{ij}f_{i}f_{j}=\left\langle \nabla \left( u_{i}f_{i}\right) ,\nabla
f\right\rangle -f_{ij}u_{i}f_{j}.
\end{equation*}%
Note that by (\ref{m8}) and Proposition \ref{D2}, 
\begin{eqnarray}
\left\langle \nabla u,\nabla f\right\rangle  &=&-2u+\mathrm{Rm}\ast \mathrm{%
Rm}\ast \mathrm{Rm}  \label{u1} \\
&&+\nabla ^{2}\mathrm{Rm}\ast \mathrm{Rm}.  \notag
\end{eqnarray}%
Therefore,%
\begin{equation*}
\left\langle \nabla u,\nabla f\right\rangle =-2u+O\left( S^{3}\right) .
\end{equation*}%
Using (\ref{u1}) and (\ref{m8}) we similarly get 
\begin{eqnarray}
\left\langle \nabla \left\langle \nabla u,\nabla f\right\rangle ,\nabla
f\right\rangle  &=&-2\left\langle \nabla u,\nabla f\right\rangle +O\left(
S^{3}\right)   \label{u2} \\
&=&4u+O\left( S^{3}\right) .  \notag
\end{eqnarray}%
Finally, we have 
\begin{eqnarray*}
f_{ij}u_{i}f_{j} &=&\frac{1}{2}\left\langle \nabla u,\nabla f\right\rangle
-\left\langle \nabla u,\nabla S\right\rangle  \\
&=&-u+O\left( S^{3}\right) .
\end{eqnarray*}%
Hence, by (\ref{u1}) and (\ref{u2}) we conclude that 
\begin{equation}
u_{ij}f_{i}f_{j}=5u+O\left( S^{3}\right)   \label{u3}
\end{equation}%
and%
\begin{equation}
\frac{1}{\left\vert \nabla f\right\vert ^{2}}u_{ij}f_{i}f_{j}S^{-2}=\frac{5}{%
\left\vert \nabla f\right\vert ^{2}}w+O\left( Sf^{-1}\right) .  \label{r2}
\end{equation}%
Note that the second and the third term in (\ref{r1}) can be rewritten as 
\begin{eqnarray}
&&\frac{f_{i}f_{j}}{\left\vert \nabla f\right\vert ^{2}}\left(
-4u_{i}S_{j}S^{-3}+6uS_{i}S_{j}S^{-4}\right)   \label{r3} \\
&=&-\frac{4}{\left\vert \nabla f\right\vert ^{2}}\left( w_{i}f_{i}\right)
S_{j}f_{j}S^{-1}-\frac{2u}{\left\vert \nabla f\right\vert ^{2}}\left\langle
\nabla S,\nabla f\right\rangle ^{2}S^{-4}.  \notag
\end{eqnarray}%
The first term above is estimated by (\ref{Sc}) as%
\begin{equation*}
\frac{4}{\left\vert \nabla f\right\vert ^{2}}\left\vert \left\langle \nabla
w,\nabla f\right\rangle \left\langle \nabla S,\nabla f\right\rangle
\right\vert S^{-1}\leq cf^{-1}\left\vert \left\langle \nabla w,\nabla
f\right\rangle \right\vert .
\end{equation*}%
The second term can be computed as%
\begin{equation*}
\frac{2u}{\left\vert \nabla f\right\vert ^{2}}\left\langle \nabla S,\nabla
f\right\rangle ^{2}S^{-4}=\frac{2}{\left\vert \nabla f\right\vert ^{2}}%
w+O\left( Sf^{-1}\right) 
\end{equation*}%
by noting that 
\begin{equation*}
\left\langle \nabla S,\nabla f\right\rangle =-S+O(S^{2}),
\end{equation*}%
where we have used (\ref{m8}) and Proposition \ref{D2}. Thus, we have proved
that%
\begin{eqnarray}
&&\frac{f_{i}f_{j}}{\left\vert \nabla f\right\vert ^{2}}\left(
-4u_{i}S_{j}S^{-3}+6uS_{i}S_{j}S^{-4}\right)   \label{u4} \\
&\leq &c\,t_{0}^{-1}\,\left\vert \left\langle \nabla w,\nabla f\right\rangle
\right\vert -\frac{2}{\left\vert \nabla f\right\vert ^{2}}w+O\left(
Sf^{-1}\right) .  \notag
\end{eqnarray}

To estimate the last term in (\ref{r1}), we write%
\begin{eqnarray*}
S_{ij}f_{i}f_{j} &=&\left\langle \nabla \left\langle \nabla S,\nabla
f\right\rangle ,\nabla f\right\rangle -f_{ij}S_{i}f_{j} \\
&=&\left\langle \nabla \left\langle \nabla S,\nabla f\right\rangle ,\nabla
f\right\rangle -\frac{1}{2}\left\langle \nabla S,\nabla f\right\rangle +%
\frac{1}{2}\left\vert \nabla S\right\vert ^{2}.
\end{eqnarray*}%
Following a similar idea as in the proof of (\ref{u3}), one sees that $%
\left\langle \nabla S,\nabla f\right\rangle =-S+O(S^{2})$ and $\left\langle
\nabla \left\langle \nabla S,\nabla f\right\rangle ,\nabla f\right\rangle
=S+O\left( S^{2}\right).$ Therefore,%
\begin{equation*}
S_{ij}f_{i}f_{j}=\frac{3}{2}S+O\left( S^{2}\right).
\end{equation*}%
This implies that%
\begin{equation}
\frac{2}{\left\vert \nabla f\right\vert ^{2}}u\left( S_{ij}f_{i}f_{j}\right)
S^{-3}=\frac{3}{\left\vert \nabla f\right\vert ^{2}}w+O\left( Sf^{-1}\right).
\label{u5}
\end{equation}%
By (\ref{r1}), (\ref{r2}), (\ref{u4}) and (\ref{u5}) we obtain%
\begin{equation}
w_{nn}\leq c_{0}\,t_{0}^{-1}\,\left\vert \left\langle \nabla G,\nabla
f\right\rangle \right\vert +O(Sf^{-1}).  \label{r4}
\end{equation}

Note that by (\ref{Sc}),%
\begin{eqnarray*}
\left\langle \nabla G,\nabla \ln S\right\rangle  &=&\left\langle \nabla
G,\nabla \ln S\right\rangle _{\Sigma }+\frac{1}{\left\vert \nabla
f\right\vert ^{2}}\left\langle \nabla G,\nabla f\right\rangle \left\langle
\nabla \ln S,\nabla f\right\rangle  \\
&\leq &\left\langle \nabla G,\nabla \ln S\right\rangle _{\Sigma
}+c_{0}\,t_{0}^{-1}\,\left\vert \left\langle \nabla G,\nabla f\right\rangle
\right\vert 
\end{eqnarray*}%
and, using (\ref{2ff}),%
\begin{equation*}
H\,G_{n}\leq c_{0}\,t_{0}^{-1}\,\left\vert \left\langle \nabla G,\nabla
f\right\rangle \right\vert .
\end{equation*}%
Combining this with (\ref{c15}) and (\ref{r4}), we conclude that%
\begin{eqnarray}
\Delta _{\Sigma }G &\geq &\left\langle \nabla G,\nabla f\right\rangle
-c_{0}\,t_{0}^{-1}\,\left\vert \left\langle \nabla G,\nabla f\right\rangle
\right\vert   \notag \\
&&-2\left\langle \nabla G,\nabla \ln S\right\rangle _{\Sigma }+\theta
SG-c\,S\,f^{-1}.  \label{r4'}
\end{eqnarray}%
Here $\theta >0$ depends only on $n$, whereas the constants $c_{0}$ and $c$
depend only on $n$, $C_{0}$ from (\ref{CLY}) and $A_{0}$,....,$A_{K}$ from (%
\ref{Rm}), for some absolute constant $K$.

Now if the maximum of $G$ is achieved on $\Sigma \left( t_{0}\right) ,$ then
(\ref{c12}) and (\ref{G}) imply that $G\leq \frac{2\varepsilon }{3}.$
Otherwise, by the maximum principle we get $G\leq c\,t_{0}^{-1}$, for a
constant $c$ that is independent of $t_{0}$ and $T$.  Hence, by assuming $%
t_{0}$ to be large enough, one concludes that $G\leq \frac{2\varepsilon }{3}.
$ In either case, it shows that $G\leq \frac{2\varepsilon }{3}$ on $D\left(
T\right) \backslash D\left( t_{0}\right) .$ Now (\ref{G}) implies that%
\begin{equation*}
\sup_{\Sigma \left( T\right) }\left\vert \overset{\circ }{\mathrm{Rm}%
_{\Sigma }}\right\vert ^{2}S^{-2}<\varepsilon .
\end{equation*}%
This contradicts with (\ref{c14'}). So the assumption that $T<\infty $ is
false. Therefore, 
\begin{equation}
\sup_{\Sigma \left( t\right) }\left\vert \overset{\circ }{\mathrm{Rm}%
_{\Sigma }}\right\vert ^{2}S^{-2}<\varepsilon   \label{r5}
\end{equation}%
for all $t\geq t_{0}.$

We now claim that $S\geq C>0$ on $M.$ Indeed, if there exists $x\in
M\backslash D\left( t_{0}\right) $ with $S\left( x\right) <\frac{1}{2C_{1}},$
where $C_{1}$ is the constant in (\ref{S'}), then (\ref{S'}) implies that $%
S\,f\leq c$ along the integral curve of $\nabla f$ through $x.$ But this
contradicts with the fact that $\left( M,g,x_{k}\right) $ converges to $%
\mathbb{R}\times \mathbb{S}^{n-1}/\Gamma .$ In conclusion, $S$ is bounded
below by a positive constant. 

Let us assume that $z_k\rightarrow \infty$ is a sequence so that $\left(M,g,z_k\right)$ converges smoothly to 
$\mathbb{R}\times N$, where $\left(N,h\right)$ is a shrinking Ricci soliton. By (\ref{r5}) and the fact that $S\ge C>0$, it follows that  $\left(N,h\right) $ is isometric  to a quotient of the round sphere $\mathbb{S}^{n-1}$. 
By hypothesis  $\Sigma(t_k)$, the level set of $f$ containing $x_k$, converges to $\mathbb{S}^{n-1}\slash \Gamma$. Since all level sets $\Sigma(t)$ are diffeomorphic for $t\ge t_0$ large enough, we conclude that $\left(N,h\right)$ is isometric to $\mathbb{S}^{n-1}\slash \Gamma$, for any such sequence $z_k$. 

If $y_k\rightarrow \infty$ is an arbitrary sequence, according to Proposition 5.2 in \cite{Na}, there exist sequences $y_k^{+} $ and $y_k^{-}$ so that $\left(M,g,y_k^{\pm}\right)$ converge smoothly to shrinking Ricci solitons.  We have established that any such shrinking solitons are isometric to the same quotient of the round cylinder $\mathbb{R}\times\mathbb{S}^{n-1}\slash \Gamma$, so using Proposition 5.2 in \cite{Na} it follows that  $\left(M,g,y_k\right)$ converges itself to $\mathbb{R}\times \mathbb{S}^{n-1}\slash \Gamma$.

From here we get that 
\begin{equation}
\lim_{x\rightarrow \infty }\left\vert \overset{\circ }{\mathrm{Rm}_{\Sigma }}%
\right\vert \left( x\right) =0\;\;\;\text{and}\;\;\;\lim_{x\rightarrow
\infty }\left\vert S(x)-\frac{n-1}{2}\right\vert =0.  \label{r6}
\end{equation}%
We now strengthen the above conclusion and show that $M$ is asymptotic to $%
\mathbb{R}\times \mathbb{S}^{n-1}/\Gamma .$ For this, we first obtain an
explicit convergence rate for (\ref{r6}). According to (\ref{c15}), there
exists $\alpha >0$ depending only on $n$ so that 
\begin{equation}
\Delta _{F}G\geq \alpha G-cf^{-1}\text{ on }M\backslash D\left( t_{0}\right)
,  \label{r6'}
\end{equation}%
where $G:=\left\vert \mathrm{Rm}\right\vert ^{2}S^{-2}-\frac{2}{\left(
n-1\right) \left( n-2\right) }$ and $F:=f-2\ln S.$ We may assume that $%
\alpha \leq 1$. Define 
\begin{equation*}
H:=G-t_{0}^{\frac{\alpha }{2}}f^{-\frac{\alpha }{2}}.
\end{equation*}%
Then, choosing $t_{0}$ large enough, (\ref{r6}) implies that $H<0$ on $%
\Sigma \left( t_{0}\right) $ and $H\rightarrow 0$ at infinity. Furthermore,
it is easy to check that 
\begin{eqnarray*}
\Delta _{F}f^{-\frac{\alpha }{2}} &=&-\frac{\alpha }{2}\left( \Delta
_{F}f\right) f^{-\frac{\alpha }{2}-1}+\frac{\alpha }{2}\left( \frac{\alpha }{%
2}+1\right) \left\vert \nabla f\right\vert ^{2}f^{-\frac{\alpha }{2}-2} \\
&\leq &\frac{\alpha }{2}f^{-\frac{\alpha }{2}}+cf^{-\frac{\alpha }{2}-1} \\
&\leq &\frac{3}{4}\alpha f^{-\frac{\alpha }{2}}.
\end{eqnarray*}%
Hence, (\ref{r6'}) implies that%
\begin{eqnarray*}
\Delta _{F}H &\geq &\alpha H+\frac{\alpha }{4}t_{0}^{\frac{\alpha }{2}}f^{-%
\frac{\alpha }{2}}-cf^{-1} \\
&\geq &\alpha H
\end{eqnarray*}%
on $M\backslash D\left( t_{0}\right) $. For the last inequality, we used
that $\frac{\alpha }{2}<1$. Using the maximum principle, we now conclude
that $H\leq 0$ on $M\backslash D\left( t_{0}\right) $. Hence, this proves
that there exists $b_{0}$ depending only on $n$ so that 
\begin{equation}
\left\vert \overset{\circ }{\mathrm{Rm}_{\Sigma }}\right\vert \leq
cf^{-b_{0}}  \label{r7}
\end{equation}%
on $M.$

Define the tensor ${\mathrm{Q}}$ on $M$ by 
\begin{eqnarray*}
Q_{ijkl} &=&R_{ijkl}-\frac{S}{(n-1)(n-2)}\left(
g_{ik}g_{jl}-g_{il}g_{jk}\right) \\
&&+\frac{S}{\left( n-1\right) \left( n-2\right) }\left( g_{ik}\frac{%
f_{j}f_{l}}{\left\vert \nabla f\right\vert ^{2}}-g_{jk}\frac{f_{i}f_{l}}{%
\left\vert \nabla f\right\vert ^{2}}+g_{jl}\frac{f_{i}f_{k}}{\left\vert
\nabla f\right\vert ^{2}}-g_{il}\frac{f_{j}f_{k}}{\left\vert \nabla
f\right\vert ^{2}}\right) .
\end{eqnarray*}%
Observe that $Q_{ijkl}=R_{ijkl}$ if at least one of the indices $i,j,k,l$ is
equal to $n$, and 
\begin{equation*}
Q_{abcd}=\overset{\circ }{R}_{abcd}.
\end{equation*}%
By (\ref{r7}) and (\ref{a}), we obtain that

\begin{equation}
\left\vert {\mathrm{Q}}\right\vert \leq cf^{-b_{0}}  \label{r8}
\end{equation}%
on $M.$

We claim that for all $k\geq 1$ there exists $b_{k}>0,$ which depends only
on $n,$ and $c_{k}$ so that 
\begin{equation}
\left\vert \nabla ^{k}\mathrm{Rm}\right\vert \leq c_{k}f^{-b_{k}}.
\label{r10}
\end{equation}%
Indeed, for $x\in \Sigma \left( t\right) $ and $\theta :=t^{-\frac{1}{2}%
b_{0}}$ let $\phi $ be a cut-off on $B_{x}\left( \theta \right) $ so that $%
\phi =1$ on $B_{x}\left( \frac{\theta }{2}\right) $ and $\left\vert \nabla
\phi \right\vert \leq c\theta ^{-1}.$ Integrating by parts and using (\ref%
{m8}) and (\ref{Rm}), we have 
\begin{eqnarray*}
\int_{B_{x}\left( \theta \right) }\left\vert \nabla \mathrm{Q}\right\vert
^{2}\phi ^{2} &=&-\int_{B_{x}\left( \theta \right) }\left( \Delta
Q_{ijkl}\right) Q_{ijkl}\phi ^{2} \\
&&-\int_{B_{x}\left( \theta \right) }\left\langle \nabla Q_{ijkl},\nabla
\phi ^{2}\right\rangle Q_{ijkl} \\
&\leq &c\left( 1+\frac{1}{\theta }\right) \int_{B_{x}\left( \theta \right)
}\left\vert \mathrm{Q}\right\vert .
\end{eqnarray*}%
It follows from (\ref{r8}) that 
\begin{equation}
\int_{B_{x}\left( \theta \right) }\left\vert \nabla \mathrm{Q}\right\vert
^{2}\phi ^{2}\leq ct^{-\frac{1}{2}b_{0}}\mathrm{Vol}\left( B_{x}\left(
\theta \right) \right) .  \label{r11}
\end{equation}
As $\left( M,g\right) $ has bounded curvature, the volume comparison implies
that $\mathrm{Vol}\left( B_{x}\left( \theta \right) \right) \leq c\mathrm{Vol%
}\left( B_{x}\left( \frac{\theta }{2}\right) \right) .$ Together with (\ref%
{r11}), this proves that 
\begin{equation}
\inf_{B_{x}\left( \frac{\theta }{2}\right) }\left\vert \nabla \mathrm{Q}%
\right\vert ^{2}\leq ct^{-\frac{1}{2}b_{0}}.  \label{r12}
\end{equation}%
By (\ref{Rm}) we have that $\left\vert \nabla \left\vert \nabla \mathrm{Q}%
\right\vert ^{2}\right\vert \leq c.$ This together with (\ref{r12})
immediately leads to%
\begin{equation*}
\left\vert \nabla \mathrm{Q}\right\vert \leq cf^{-\frac{1}{4}b_{0}}.
\end{equation*}%
Therefore, as the hessian of $f$ is bounded, we get that 
\begin{eqnarray}
&&\nabla _{p}R_{ijkl}-\frac{\nabla _{p}S}{(n-1)(n-2)}\left(
g_{ik}g_{jl}-g_{il}g_{jk}\right)  \label{r122} \\
&&+\frac{\nabla _{p}S}{\left( n-1\right) \left( n-2\right) }\left( g_{ik}%
\frac{f_{j}f_{l}}{\left\vert \nabla f\right\vert ^{2}}-g_{jk}\frac{f_{i}f_{l}%
}{\left\vert \nabla f\right\vert ^{2}}+g_{jl}\frac{f_{i}f_{k}}{\left\vert
\nabla f\right\vert ^{2}}-g_{il}\frac{f_{j}f_{k}}{\left\vert \nabla
f\right\vert ^{2}}\right)  \notag \\
&=&O\left( f^{-\frac{1}{4}b_{0}}\right) .  \notag
\end{eqnarray}%
Tracing this formula, we obtain 
\begin{equation*}
\nabla _{p}R_{ik}-\frac{\nabla _{p}S}{n-1}\,g_{ik}-\frac{\nabla _{p}S}{n-1}%
\frac{f_{i}f_{k}}{\left\vert \nabla f\right\vert ^{2}}=O\left( f^{-\frac{1}{4%
}b_{0}}\right) .
\end{equation*}%
Tracing this in $p=k$, and using that 
\begin{equation*}
\left\vert \left\langle \nabla S,\nabla f\right\rangle \right\vert \leq c,
\end{equation*}%
we conclude from above that 
\begin{equation}
\left\vert \nabla S\right\vert \leq cf^{-\frac{1}{4}b_{0}}.  \label{r13}
\end{equation}%
By (\ref{r122}) and (\ref{r13}) it follows that 
\begin{equation}
\left\vert \nabla \mathrm{Rm}\right\vert \leq cf^{-\frac{1}{4}b_{0}}.
\label{r13'}
\end{equation}%
By induction on $k$ we get (\ref{r10}).

We now consider $\phi _{t}$ defined by 
\begin{eqnarray}
\frac{d\phi _{t}}{dt} &=&\frac{\nabla f}{\left\vert \nabla f\right\vert ^{2}}
\label{phi} \\
\phi _{t_{0}} &=&\mathrm{Id}\text{ \ on }\Sigma \left( t_{0}\right) .  \notag
\end{eqnarray}%
For a fixed $x\in \Sigma \left( t_{0}\right) $ we denote $S\left( t\right)
:=S\left( \phi _{t}\left( x\right) \right) ,$ where $t\geq t_{0}.$ Then%
\begin{eqnarray}
\frac{dS}{dt} &=&\frac{\left\langle \nabla S,\nabla f\right\rangle }{%
\left\vert \nabla f\right\vert ^{2}}  \label{rs} \\
&=&\frac{\Delta S-S+2\left\vert \mathrm{Ric}\right\vert ^{2}}{t-S}.  \notag
\end{eqnarray}%
Tracing (\ref{r7}) we get that 
\begin{equation*}
\left\vert R_{ab}-\frac{S}{n-1}g_{ab}\right\vert \leq cf^{-\delta },
\end{equation*}%
whereas by (\ref{r10}) we have 
\begin{equation*}
\left\vert \Delta S\right\vert \leq cf^{-\delta },
\end{equation*}%
for some $\delta >0$. Hence, (\ref{rs}) implies that 
\begin{equation*}
t\frac{dS}{dt}=\frac{2}{n-1}S^{2}-S+O\left( f^{-\delta }\right) .
\end{equation*}%
Consequently, the function $\rho :=S-\frac{n-1}{2}$ satisfies 
\begin{equation*}
t\rho ^{\prime }=\rho +\frac{2}{n-1}\rho ^{2}+O\left( f^{-\delta }\right)
\end{equation*}%
and $\rho \rightarrow 0$ at infinity. Integrating this in $t$ we find that
there exists $\delta >0$ so that $\left\vert \rho \left( t\right)
\right\vert \leq ct^{-\delta }.$ Hence, we have proved that 
\begin{equation}
\left\vert S-\frac{n-1}{2}\right\vert \leq cf^{-\delta }\text{ \ on }%
M\backslash D\left( t_{0}\right) .  \label{r14}
\end{equation}%
This and (\ref{r7}) imply that 
\begin{equation}
\left\vert R_{abcd}-\frac{1}{2\left( n-2\right) }\left(
g_{ac}g_{bd}-g_{ad}g_{bc}\right) \right\vert \leq cf^{-\delta },
\label{Rm_decay}
\end{equation}%
for some $\delta >0$ depending only on $n$.

We can now prove that $M$ is smoothly asymptotic to $\mathbb{R}\times 
\mathbb{S}^{n-1}/\Gamma .$ Indeed, for $\phi _{t}$ defined in (\ref{phi})
consider $\widetilde{g}\left( t\right) =\phi _{t}^{\ast }\left( g\right) $
on $\Sigma \left( t_{0}\right) ,$ the pullback of the metric $g$ on $\Sigma
\left( t\right) .$ Then%
\begin{eqnarray*}
\frac{d}{dt}\widetilde{g}_{ab}\left( t\right) &=&2\,\phi _{t}^{\ast }\left( 
\frac{f_{ab}}{\left\vert \nabla f\right\vert ^{2}}\right) \\
&=&\frac{1}{\left\vert \nabla f\right\vert ^{2}}\phi _{t}^{\ast }\left(
g_{ab}-2R_{ab}\right) .
\end{eqnarray*}%
Hence, by (\ref{Rm_decay}), 
\begin{equation*}
-ct^{-1-\delta }\widetilde{g}_{ab}\left( t\right) \leq \frac{d}{dt}%
\widetilde{g}_{ab}\left( t\right) \leq ct^{-1-\delta }\widetilde{g}%
_{ab}\left( t\right) .
\end{equation*}%
Integrating in $t$ implies that 
\begin{equation*}
\left\vert \widetilde{g}_{ab}\left( t\right) -g_{ab}^{\infty }\right\vert
\leq ct^{-\delta },
\end{equation*}%
where $g_{ab}^{\infty }$ is the round metric on $\Sigma \left( t_{0}\right)
. $ Note that (\ref{r13'}) implies decay estimates for $\left\vert \partial
^{k}\widetilde{g}_{ab}\right\vert $ for all $k.$ It is easy to see that this
implies $M$ is smoothly asymptotic to $\mathbb{R}\times \mathbb{S}%
^{n-1}/\Gamma .$ The theorem is proved.
\end{proof}

\section{\label{4-dim}Asymptotic geometry of four dimensional shrinkers}

We are now in position to prove Theorem \ref{dim4} in the introduction. For
the convenience of the reader, we restate it here.

\begin{theorem}
\label{Main_1} Let $\left( M,g,f\right) $ be a complete, four dimensional
gradient shrinking Ricci soliton with bounded scalar curvature $S.$ If $S$
is bounded from below by a positive constant on end $E$ of $M,$ then $E$ is
smoothly asymptotic to the round cylinder $\mathbb{R}\times \mathbb{S}%
^{3}/\Gamma,$ or for any sequence $x_{i}\in E$ going to infinity along an
integral curve of $\nabla f,$ $(M,g,x_{i})$ converges smoothly to $\mathbb{R}%
^{2}\times \mathbb{S}^{2}$ or its $\mathbb{Z}_2$ quotient. Moreover, the
limit is uniquely determined by the integral curve and is independent of the
sequence $x_i.$
\end{theorem}

\begin{proof}
Since $S$ is bounded, by Theorem \ref{MW}, $M$ has bounded curvature. Recall
that $\left( M,g\left( t\right) \right) $ is an ancient solution to the
Ricci flow defined on $\left( -\infty ,0\right) ,$ where 
\begin{equation*}
g\left( t\right) :=\left( -t\right) \phi _{t}^{\ast }g
\end{equation*}%
and $\phi _{t}$ is the family of diffeomorphisms defined by 
\begin{eqnarray*}
\frac{d\phi }{dt} &=&\frac{\nabla f}{\left( -t\right) } \\
\phi _{-1} &=&\mathrm{Id.}
\end{eqnarray*}%
For any sequence $\tau _{i}\rightarrow 0,$ consider the rescaled flow $%
\left( M,g_{i}\left( t\right) \right) $ for $t\,<0,$ where 
\begin{equation*}
g_{i}\left( t\right) :=\frac{1}{\tau _{i}}g\left( \tau _{i}t\right) .
\end{equation*}%
By Theorem 1.5 in \cite{Na}, for any $x_{0}\in \Sigma \left(
t_{0}\right)$, a subsequence of $\left( M,g_{i}\left( t\right) ,\left(
x_{0},-1\right) \right) $ converges smoothly to a gradient shrinking Ricci
soliton $\left( M_{\infty },g_{\infty }\left( t\right) ,\left( x_{\infty
},-1\right) \right) .$

Now for any sequence $x_{i}\in E$ going to infinity along an integral curve
of $\nabla f,$ obviously one may write $x_{i}:=\phi _{-\tau _{i}}\left(
x_{0}\right) $ for some point $x_{0}$ and $\tau _{i}\rightarrow 0.$ However,
as $g_{i}\left( -1\right) =\phi _{-\tau _{i}}^{\ast }g,$ we see that a
subsequence of $\left( M,g,x_{i}\right) $ converges to $\left( M_{\infty
},g_{\infty }\left( -1\right) ,x_{\infty }\right) .$ Since $x_{i}\rightarrow
\infty ,$ invoking Proposition 5.1 in \cite{Na} we conclude that $\left(
M_{\infty },g_{\infty }\left( -1\right) \right) $ splits as $\left( \mathbb{R%
},ds^{2}\right) \times \left( N,h\right) ,$ where $\left( N,h\right) $ is a
normalized three dimensional gradient shrinking Ricci soliton. Theorem \ref%
{dim3} implies that $\left( N,h\right) $ is isometric to a quotient of
either $\mathbb{S}^{3}$ or $\mathbb{R}\times \mathbb{S}^{2}.$ If the
quotient of $\mathbb{S}^{3}$ ever occurs, then Theorem \ref{cyl} implies
that $E$ is smoothly asymptotic to $\mathbb{R}\times \mathbb{S}^{3}/\Gamma .$
So we may assume that $N$ is never isometric to a quotient of $\mathbb{S}%
^{3}.$ In this case, for any sequence $x_{i}\in E$ going to infinity along
an integral curve of $\nabla f,$ a subsequence of $(M,g,x_{i})$ converges
smoothly to $\mathbb{R}\times N,$ where $N$ is either $\mathbb{R}\times 
\mathbb{S}^{2}$ or its $\mathbb{Z}_{2}$ quotient. However, by Remark 5.1 in 
\cite{Na}, such $N$ is uniquely determined by the integral curve. This
proves the theorem.
\end{proof}

We conclude with a rigidity result for four dimensional gradient shrinking K\"{a}hler Ricci soliton.

\begin{proposition}
\label{Kahler_nonneg}Let $\left( M,g,f\right) $ be a complete, non-flat,
four dimensional, gradient shrinking K\"{a}hler Ricci soliton with bounded
nonnegative Ricci curvature. Then $\left( M,g\right) $ is isometric to a
quotient of $\mathbb{R}^{2}\times \mathbb{S}^{2}.$
\end{proposition}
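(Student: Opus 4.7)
The strategy is to invoke the dichotomy from Theorem \ref{Kahler1}, applicable since bounded Ricci gives bounded scalar curvature: either $\lim_{x \to \infty} S = 0$ or $\lim_{x \to \infty} S = 1$ (implicitly, $M$ is non-compact, for otherwise the asymptotic statement is vacuous). I would rule out the first alternative and, in the second, extract a global $\mathbb{R}^2 \times \mathbb{S}^2/\Gamma$ structure using the K\"ahler condition together with the nonnegativity of Ricci.

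Suppose first $\lim_{x \to \infty} S = 0$. By Theorem \ref{cone_i} the soliton is smoothly asymptotic to a cone, and Theorem \ref{MW} forces the full curvature to decay in proportion to $S$, so the cone is a Ricci-flat K\"ahler cone of complex dimension two. Its three-dimensional link is Sasaki--Einstein with constant positive sectional curvature one, hence a round quotient $\mathbb{S}^3/\Gamma$, and the cone itself is $\mathbb{C}^2/\Gamma$ with its flat metric. The flat Gaussian soliton on $\mathbb{C}^2/\Gamma$ is also smoothly asymptotic to this same cone, so by the Kotschwar--L. Wang rigidity \cite{KW}, $(M,g)$ must be isometric to the Gaussian, hence flat, contradicting the non-flatness hypothesis.

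Hence $\lim_{x\to\infty} S = 1$, and every pointed sequence $(M,g,x_i)$ with $x_i \to \infty$ subconverges smoothly to $\mathbb{R}^2 \times \mathbb{S}^2/\Gamma$ by Theorem \ref{Kahler1}. Using $\mathrm{Ric} \geq 0$, the plan is to extract an honest line in $M$ by taking a Cheeger--Gromov limit of long almost-minimizing geodesic segments near the basepoints $x_i$ aligned with the $\mathbb{R}^2$-direction of the limit, and to apply the Cheeger--Gromoll splitting theorem to obtain $M = \mathbb{R} \times N_1$ isometrically. The K\"ahler condition then automatically doubles the splitting: if $X$ is the unit parallel vector field along the $\mathbb{R}$-factor, then $JX$ is a second parallel unit vector field pointwise orthogonal to $X$, and the pair $\{X, JX\}$ spans a two-dimensional $J$-invariant parallel distribution; de Rham gives $M = \mathbb{R}^2 \times N_2$ with $N_2$ a two-dimensional K\"ahler shrinker (after writing $f = |x|^2/4 + f_2$ for $x$ the $\mathbb{R}^2$-coordinate). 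Since $S$ restricted to $N_2$ is identically $1$, the two-dimensional classification stated at the start of the introduction identifies $N_2$ as a round quotient $\mathbb{S}^2/\Gamma$ (the flat plane being excluded by positivity of $S_{N_2}$), yielding $M = \mathbb{R}^2 \times \mathbb{S}^2/\Gamma$. The main obstacle lies in producing the honest line inside $M$: the $\mathbb{R}^2$-direction at infinity carries globally minimizing doubly-infinite geodesics only in the limit space, and transferring these to $M$ itself requires a careful diagonal argument using the smoothness of the Cheeger--Gromov convergence together with the uniform curvature bound.
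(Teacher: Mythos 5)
Your proposal diverges substantially from the paper's proof and, more importantly, has a genuine gap that you acknowledge but do not close. The paper does not invoke Theorem \ref{Kahler1} at all: it simply observes from (\ref{m8}) that $\langle \nabla S,\nabla f\rangle = 2\,\mathrm{Ric}(\nabla f,\nabla f)\geq 0$, so $S$ is non-decreasing along every integral curve of $\nabla f$, and therefore $\lim_{x\to\infty}S=1$ automatically (the limit at infinity is a cylinder with $S=1$, so no dichotomy and no cone-rigidity argument are needed to exclude $S\to 0$). In fact, this same monotonicity immediately rules out $S\to 0$ without any reference to asymptotic cones, so your Kotschwar--Wang excursion is both more elaborate than necessary and incompletely justified --- passing from ``$|\mathrm{Rm}|\leq cS$'' to ``the asymptotic cone is Ricci-flat K\"ahler'' is not a one-line step.

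The real gap, however, is in the $S\to 1$ case. Your plan is Cheeger--Gromoll splitting on $M$ itself, but as you yourself note, this requires an actual line in $M$, i.e.\ a globally minimizing doubly-infinite geodesic. Having $\mathbb{R}^{2}\times\mathbb{S}^{2}/\Gamma$ appear as a pointed Cheeger--Gromov limit does not produce such a line in $M$; long almost-minimizing segments near the basepoints need not assemble into a minimizing line back in the manifold, and in general a manifold with $\mathrm{Ric}\geq 0$ whose blow-down is a split product need not split. The paper circumvents this entirely with an integral rigidity argument: since $(M,g)$ is K\"ahler, the Ricci eigenvalues pair up as $(\alpha,\alpha,\beta,\beta)$, giving the pointwise identity $S^{2}-2|\mathrm{Ric}|^{2}=8\alpha\beta\geq 0$. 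Combined with $S\leq 1$ outside a compact set (from the monotonicity and $S\to 1$), this yields $\Delta_{f}S=S-S^{2}+(S^{2}-2|\mathrm{Ric}|^{2})\geq 0$ outside $D(t_{0})$. The weighted Stokes identity
\begin{equation*}
0\leq \int_{M\setminus D(t_{0})}(\Delta_{f}S)\,e^{-f}
=-\int_{\Sigma(t_{0})}\frac{\langle\nabla S,\nabla f\rangle}{|\nabla f|}\,e^{-f}\leq 0
\end{equation*}
then forces $\Delta_{f}S\equiv 0$, hence $S\equiv 1$ and $\alpha\equiv 0$ outside $D(t_{0})$; unique continuation propagates this to all of $M$, and de Rham splitting along the two-dimensional kernel of $\mathrm{Ric}$ (which is $J$-invariant, as you correctly noted) gives the result. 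Your proposal's final step (using the two-dimensional classification for the $N_{2}$-factor) is fine, but the route to the splitting is where your argument breaks down and the paper's succeeds.
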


\begin{proof}
In view of (\ref{m8}), since $\left( M,g\right) $ has nonnegative Ricci
curvature, it follows that $S$ increases along each integral curve of $%
\nabla f.$ Hence, $S$ is bounded below by a positive constant. Since $M$
is K\"{a}hler, it can never be asymptotic to a quotient of the round cylinder.
In view of Theorem \ref{Main_1}, we conclude that
$\left( M,g\right)$ converges along each integral curve to 
$\left(\mathbb{R}^{2}\times \mathbb{S}^{2}\right)/\Gamma $ and $S$ must
converge to $1$ at infinity. In particular, this means that there exists a
compact set $K\subset M$ so that $S\leq 1$ on $M\backslash K,$ where the
compact set $K$ contains all critical points of $f.$

We diagonalize the Ricci curvature and denote the eigenvalues by $\alpha
\leq \beta.$ Then it follows that 
\begin{eqnarray*}
S^{2}-2\left\vert \mathrm{Ric}\right\vert ^{2} &=&4\left( \alpha +\beta
\right) ^{2}-4\left( \alpha ^{2}+\beta ^{2}\right) \\
&=&8\alpha \beta \geq 0.
\end{eqnarray*}

Hence, on $M\backslash K$ we have 
\begin{eqnarray}
\Delta _{f}S &=&S-S^{2}+S^{2}-2\left\vert \mathrm{Ric}\right\vert ^{2}
\label{p3} \\
&\geq &S-S^{2}  \notag \\
&\geq &0.  \notag
\end{eqnarray}

Without loss of generality, we may assume that $K=D\left( t_{0}\right) $ for
some $t_{0}>0.$ Then by the Stokes theorem, 
\begin{equation}
0\leq \int_{M\backslash D\left( t_{0}\right) }\left( \Delta _{f}S\right)
e^{-f}=-\int_{\Sigma \left( t_{0}\right) }\frac{\left\langle \nabla S,\nabla
f\right\rangle }{\left\vert \nabla f\right\vert }e^{-f}\leq 0,  \label{p4}
\end{equation}%
where the last inequality is because $\left\langle \nabla S,\nabla
f\right\rangle =2\mathrm{Ric}\left( \nabla f,\nabla f\right) \geq 0.$ It
follows from (\ref{p3}) and (\ref{p4}) that $S=1$ on $M\backslash K$ and the
eigenvalue $\alpha $ of the Ricci curvature is zero. By Corollary 1.3 of 
\cite{K}, $(M, g)$ is real analytic. Therefore, $S=1,$ $\alpha =0$ and $%
\beta =\frac{1}{2}$ on $M.$ The proposition follows from the de Rham
splitting theorem.
\end{proof}


\begin{thebibliography}{99}
\bibitem{C1} H.D. Cao, Recent progress on Ricci solitons, Adv. Lect. Math.
11(2) (2010), 1--38.

\bibitem{C2} H.D. Cao, Geometry of Ricci solitons, Chinese Ann. Math. 27B(2)
(2006), 121--142.

\bibitem{CCZ} H. D. Cao, B. L. Chen and X. P. Zhu, Recent developments on
Hamilton's Ricci flow. Surveys in differential geometry. Vol. XII. Geometric
flows, 47-112, Surveys in Differential Geometry, 12, Int. Press, Somerville,
MA, 2008.

\bibitem{CC1} H. D. Cao and Q. Chen, On locally conformally flat gradient
steady Ricci solitons, Trans. Amer. Math. Soc. 364 (2012) 2377-2391.

\bibitem{CC2} H. D. Cao and Q. Chen, On Bach-flat gradient shrinking Ricci
solitons, Duke Math. J. 162 (2013) 1149-1169

\bibitem{CZ} H. D. Cao and D. Zhou, On complete gradient shrinking Ricci
solitons, J. Differential Geom. 85(2) (2010), 175-186.

\bibitem{CWZ} X. Cao, B. Wang and Z. Zhang, On Locally Conformally Flat
Gradient Shrinking Ricci Solitons, Comm. Contemporary Math., 13 (2011) 1-14.

\bibitem{CZh} X. Cao and Q. S. Zhang, The conjugate heat equation and
ancient solutions of the Ricci flow, Adv. Math. 228 (2011) 2891-2919.

\bibitem{Cat1} G. Catino, Complete gradient shrinking Ricci solitons with
pinched curvature, Math. Ann. 355(2) (2013), 629-635.

\bibitem{Cat2} G. Catino, Integral pinched shrinking Ricci solitons, Adv.
Math. 303 (2016), 279-284.

\bibitem{CDM} G. Catino, A. Deruelle and L. Mazzieri, Uniqueness of
asymptotically cylindrical gradient shrinking Ricci solitons,
arXiv:1311.7499 [math.DG].

\bibitem{C} B. L. Chen, Strong uniqueness of the Ricci flow, J. Differential
Geom. 82(2) (2009), 362-382.

\bibitem{CLN} B. Chow, P. Lu and L. Ni, Hamilton's Ricci flow, Graduate
Studies in Mathematics, 2006.

\bibitem{CLY} B. Chow, P. Lu and B Yang, A lower bound for the scalar
curvature of noncompact nonflat Ricci shrinkers, Comptes Rendus Mathematique
349 (2011), no. 23-24, 1265-1267.

\bibitem{ELM} M. Eminenti, G. La Nave and C. Mantegazza, Ricci solitons: the
equation point of view, Manuscripta Math. 127 (2008), 345--367.

\bibitem{EMT} J. Enders, R. M\"{u}ller and P. Topping, On Type-I
singularities in Ricci flow, Comm. Anal. Geom. 19(5) (2011), 905-922.

\bibitem{FIK} M. Feldman, T. Ilmanen and D. Knopf, Rotationally symmetric
shrinking and expanding gradient Kahler-Ricci solitons, J. Differential
Geom. 65 (2003), 169-209.

\bibitem{H1} R. Hamilton, The formation of singularities in the Ricci flow,
Surveys in Differential Geometry 2 (1995), 7-136, International Press.

\bibitem{H2} R. Hamilton, Three manifolds with positive Ricci curvature, J.
Differential Geom. 17 (1982), 255-306.

\bibitem{H3} R. Hamilton, Four manifolds with positive curvature operator,
J. Differential Geom. 24 (1986), 153-179.

\bibitem{H4} R. Hamilton, A compactness property for solutions of the Ricci
flow, Amer. J. Math. 117 (1995), 545-572.

\bibitem{HM} R. Haslhofer and R. M\"{u}ller, A compactness theorem for
complete Ricci shrinkers, Geom. Funct. Anal. 21 (2011), 1091-1116. 

\bibitem{Hu} G. Huisken, Ricci deformation of the metric on a Riemannian
manifold, J. Differential Geom. 21 (1985), 47-62.

\bibitem{I1} T. Ivey, Ricci solitons on compact three-manifolds, Diff. Geom.
Appl. 3(4) (1993), 301-307.

\bibitem{K} B. Kotschwar, A local version of Bando's theorem on the
real-analyticity of solutions to the Ricci flow, Bull. London Math. Soc. 45
(2013) 153-158.

\bibitem{KW} B. Kotschwar and L. Wang, Rigidity of asymptotically conical
shrinking Ricci solitons, J. Differential Geom. 100 (2015) 55-108.

\bibitem{Li} P. Li, Geometric Analysis, Cambridge Studies in Advanced
Mathematics, \textbf{134}, Cambridge University Press, Cambridge, 2012,
x+406 pp. ISBN: 978-1-107-02064-1.

\bibitem{LNW} X. Li, L. Ni, and K. Wang, Four-dimensional gradient shrinking
solitons with positive isotropic curvature, arXiv:1603.05264v1.

\bibitem{MW} O. Munteanu and J. Wang, Geometry of shrinking Ricci solitons,
Compositio Math. 151 (2015), 2273-2300.

\bibitem{MW1} O. Munteanu and J. Wang, Topology of K\"{a}hler Ricci
solitons, J. Differential Geom. 100 (2015), 109-128.

\bibitem{MW2} O. Munteanu and J. Wang, Positively curved shrinking Ricci
solitons are compact, J. Differential Geom. 106 (2017), 499-505.

\bibitem{Na} A. Naber, Noncompact shrinking 4-solitons with nonnegative
curvature, J. Reine Angew. Math., 645 (2010), 125-153.

\bibitem{NW} L. Ni and N. Wallach, On a classification of gradient shrinking
solitons, Math. Res. Lett. 15(5) (2008), 941--955.

\bibitem{P1} G. Perelman, The entropy formula for the Ricci flow and its
geometric applications, arXiv:math. DG/0211159.

\bibitem{P2} G. Perelman, Ricci flow with surgery on three-manifolds,
arXiv:math/0303109.

\bibitem{PW} P. Petersen and W. Wylie, On the classification of gradient
Ricci solitons, Geom. Topol. 14(4) (2010), 2277--2300.

\bibitem{S} N. Sesum, Limiting behavior of Ricci flows, Thesis (Ph.D.),
Massachusetts Institute of Technology, 2004.

\bibitem{Shi} W. X. Shi, Deforming the metric on complete Riemannian
manifolds, J. Differential Geom. (30)(1) (1989), 223--301.
\end{thebibliography}
\end{document}